\newcommand{\eps}{\varepsilon}
\newcommand{\R}{\mathbb{R}}
\newcommand{\Sp}{\mathbb{S}}
\newcommand{\cC}{\mc{C}}
\newcommand{\cE}{\mc{E}}
\newcommand{\cI}{\mc{I}}
\newcommand{\ind}{{\mathbf{1}}}
\newcommand{\be}{\begin{equation}}
\newcommand{\ee}{\end{equation}}
\newcommand{\lt}{\left}
\newcommand{\rt}{\right}
\newcommand{\mc}{\mathcal}
\newcommand{\oo}{\infty}
\newcommand{\sm}{\setminus}
\newcommand{\sub}{\subset}
\newcommand{\dw}{\downarrow}
\newcommand{\up}{\uparrow}
\newcommand{\ov}{\overline} 
\newcommand{\longto}{\longrightarrow}
\newcommand{\void}{\varnothing}
\newcommand{\te}{\theta}
\newcommand{\I}{\mc{I}}
\newcommand{\h}{\mc{H}}
\DeclareMathOperator*{\argmin}{arg\,min}
\DeclareMathOperator*{\argth}{arctanh}
\DeclareMathOperator*{\argsh}{arcsinh}
\DeclareMathOperator{\cyl}{cyl}
\DeclareMathOperator{\ball}{ball}
\def\XXint#1#2#3{{\setbox0=\hbox{$#1{#2#3}{\int}$}
     \vcenter{\hbox{$#2#3$}}\kern-.5\wd0}}
\crefname{equation}{}{}
\crefname{figure}{Figure}{Figures}
\declaretheorem[name=Theorem]{mainthm}
\declaretheorem[name=Proposition,numberlike=mainthm]{mainprp}
\declaretheorem[name=Conjecture,style=remark,numberlike=mainthm]{mainconj}
\declaretheorem[name=Hypothesis,style=remark]{mainhypothesis}
\declaretheorem[name=Theorem,numberwithin=section]{thm}
\declaretheorem[name=Lemma,numberlike=thm]{lem}
\declaretheorem[name=Proposition,numberlike=thm]{prp}
\declaretheorem[name=Corollary,numberlike=thm]{cor}
\declaretheorem[name=Definition,numberlike=thm,style=definition]{dfn}
\declaretheorem[name=Remark,numberlike=thm,style=remark]{rmk}
\crefname{equation}{}{}
\crefname{enumi}{}{}
\crefname{thm}{Theorem}{Theorems}
\crefname{mainthm}{Theorem}{Theorems}
\crefname{mainprp}{Proposition}{Theorems}
\crefname{lem}{Lemma}{Lemmas}
\crefname{prp}{Proposition}{Propositions}
\crefname{cor}{Corollary}{Corollaries}
\crefname{dfn}{Definition}{Definitions}
\crefname{rmk}{Remark}{Remarks}
\crefname{conj}{Conjecture}{Conjectures}
\crefname{mainconj}{Conjecture}{Conjectures}
\crefname{mainhypothesis}{Hypothesis}{Hypotheses}
\crefname{ex}{Example}{Examples}
\title[Non-spherical minimizers in the generalized liquid drop model]{Non-spherical minimizers in the generalized liquid drop model for Yukawa and truncated Coulomb potentials}
\author{L. Bronsard}
\address{McMaster University, Canada}
\email{bronsard@mcmaster.ca}
\author{B. Merlet}
\author{M. Pegon}
\address{Univ. Lille, CNRS, UMR 8524, Inria - Laboratoire Paul Painlev\'e, F-59000 Lille}
\email{benoit.merlet@univ-lille.fr}
\email{marc.pegon@univ-lille.fr}
\date{}
\subjclass{28A75,  49Q10,  49Q20,  49S05}
\keywords{geometric variational problems, liquid drop model}
\begin{document}

\begin{abstract}
We investigate generalized liquid drop models with screened Riesz-type interactions, focusing in
particular on truncated Coulomb and Yukawa potentials in three dimensions. While the classical Gamow
model with Coulomb interaction is conjectured to admit only spherical minimizers below a critical
mass and no minimizer above, we show that this conjecture fails if the interaction is screened.
In the case of truncated Coulomb and Yukawa potentials, we establish the existence of non-spherical
minimizers for some values of the screening parameter. This gives the first evidence of such minimizers in the class of repulsive,
radial, and radially nonincreasing kernels in three dimensions. Our approach relies on a comparison
of the energy-per-mass ratios of balls and cylinders, in contrast with recent two-dimensional
results obtained via $\Gamma$-convergence. We further show that in the unscreened Riesz case, the
conjecture remains consistent, though delicate. Indeed we observe that the
energy-per-mass ratios of balls and of cylinders are surprisingly close.
\end{abstract}

\maketitle

\tableofcontents

\section*{Introduction}

In this paper, we are interested in generalizations of Gamow's liquid drop model for the atomic
nucleus:
\be\label{minpb}
e_G(m) = \inf~ \lt \{ P(E)+\int_{E\times E} G(x-y)\,dx\,dy ~:~ E\sub\R^n \text{ s.t. }
|E|=m\rt\},
\ee
where $|E|$ denotes the Lebesgue measure (or volume, or \textsl{mass}) of a measurable set $E\sub\R^n$, and where $P(E)$ is its perimeter. We assume $n\ge2$ and that $G$ is a radial, non-negative, locally integrable kernel.
We introduce the functional $\cE_G$ defined by
\[
\cE_G(E) = P(E)+\int_{E\times E} G(x-y)\,dx\,dy.
\]

Gamow's liquid drop model refers to the specific case $n=3$ and the Coulomb interaction
$G(x)=1/|x|$, while the terminology \textsl{generalized} liquid drop model is commonly used for
arbitrary dimensions $n\ge 2$ and Riesz kernels 
\[
G(x)=R_\alpha(x)=|x|^{-\alpha},
\]
where $\alpha\in(0,n)$. Here we are particularly interested in further generalizations where the Riesz interaction
is screened:
\begin{enumerate}[1.]
\item either abruptly, by considering truncations of Riesz kernels such as 
\[
R_{\alpha,\kappa}(x) =
\ind_{|x|<\kappa}|x|^{-\alpha}
,\]
\item or smoothly, by considering the kernels 
\[
Y_{\alpha,\kappa}(x) =
e^{-|x|/\kappa}|x|^{-\alpha}.
\] 
%
%
\end{enumerate}

In both cases, the characteristic range of the interaction is $\kappa$. In the latter case, for $n=3$ and $\alpha=1$ we obtain the physically-relevant potential known as
\textsl{Yukawa potential} $Y_{1,\kappa}(x)= e^{-|x|/\kappa}/|x|$, also called \textsl{screened
Coulomb potential}. It has been introduced early in quantum mechanics to model some particle
interactions, first  between neutron and protons, see the seminal article~\cite{Yuk1935},
republished in~\cite{Yuk1955}. In the mathematical community, such potential has also been suggested
in~\cite{KMN2016} to model diblock copolymer melts and the energy $\cE_G$ with a Yukawa potential is
studied in~\cite{Fal2018} as a model generating periodic patterns.\medskip

The aim of this paper is to investigate the effect of the screening in terms of existence and
shape of minimizers for~\cref{minpb}.\medskip

Let us first present a selection of results of the literature in the absence of screening, that is, for the liquid drop model with the Riesz interaction $G(x)=R_\alpha(x)=|x|^{-\alpha}$, $0<\alpha<n$.

\begin{itemize}[leftmargin=*]
\item In any dimension $n\ge2$, for any parameter $\alpha\in(0,n)$, there exists a critical volume (or
\textsl{mass}) $m_{\ball}$ below which the balls of mass $m$ are the minimizers of~\cref{minpb}. We refer to~\cite{KM2013} for the planar case, to~\cite{KM2014,Jul2014,BC2014} for many other cases and to~\cite{FFM+2015} for the full range of $\alpha$
and any dimension.
\item In any dimension  $n\ge2$, for any parameter $\alpha\in (0,2)$ (and even $\alpha\in(0,2]$ for $n\ge
3$), there exists a critical mass $m_\mathrm{ex}\ge m_{\ball}$ for which minimizers exist for masses
$m\leq m_\mathrm{ex}$ and no minimizer exists masses for larger than $m_\mathrm{ex}$. We refer to~\cite{KM2013} for the planar case,
\cite{LO2014} for the Coulomb case, \cite{KM2014} for the cases $\alpha\in(0,2)$ in dimension $n\geq
3$, and \cite{FN2021} for the proof in the case $\alpha=2$, which is inspired from \cite{FKN2016}.
\item\label{point:ex_geq_crit} It is established in~\cite{FN2021} that in any dimension $n\geq 2$, $m_\mathrm{ex}\ge
m_\mathrm{crit}$ where  $m_\mathrm{crit}$ is the critical mass for which the energy of one ball of mass $m$ is twice the energy of a ball of mass $m/2$. 
\item In dimension $n=2$, there exists a critical parameter $\alpha_*>0$ such that for $0<\alpha<\alpha_*$, $m_{\ball}=m_\mathrm{ex}=m_\mathrm{crit}$, that is, the ball is the unique minimizer for
masses not larger than $m_\mathrm{crit}$, and there is no minimizer for larger masses. This is
given in~\cite[Theorem~2.7]{KM2013}.
\item In the Coulomb case $n=3$ and $\alpha=1$, \cite{CR2025} recently established the first
explicit lower bound $m_{\ball}\geq 1$.
\end{itemize}

These results support the following conjecture which is often stated in the literature (see \cite{CP2011,FL2015,CR2025}):

\begin{mainconj}\label{conj:gamow}
If $n=3$ and $\alpha=1$ (the case of the Coulomb interaction),
$m_{\ball}=m_\mathrm{ex}=m_\mathrm{crit}$, that is, the ball of mass $m$ is the unique minimizer (up to translations) for masses not larger than
$m_{\mathrm{crit}}$, and there is no minimizer for masses larger than $m_{\mathrm{crit}}$.
\end{mainconj}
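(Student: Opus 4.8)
As the final statement is an open conjecture, I outline the strategy I would pursue and isolate the genuine obstruction rather than present a complete argument. Since balls are minimizers for $m<m_{\ball}$ one always has $m_{\ball}\le m_{\mathrm{ex}}$, and the inequality $m_{\mathrm{ex}}\ge m_{\mathrm{crit}}$ is already known from \cite{FN2021}. The conjecture is therefore equivalent to the two inequalities: (A) $m_{\ball}\ge m_{\mathrm{crit}}$, i.e.\ the ball of mass $m$ is the unique minimizer for every $m\le m_{\mathrm{crit}}$; and (B) $m_{\mathrm{ex}}\le m_{\mathrm{crit}}$, i.e.\ no minimizer exists once $m>m_{\mathrm{crit}}$. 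Indeed, (A) and (B) together would pin $m_{\mathrm{crit}}\le m_{\ball}\le m_{\mathrm{ex}}\le m_{\mathrm{crit}}$, forcing all three to coincide.

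For the nonexistence part (B) the plan is a concentration--compactness analysis. Writing $b(m)=\cE_G(B)$ for the energy of the ball $B$ of mass $m$, scaling gives $b(m)=c_1 m^{2/3}+c_2 m^{5/3}$ in the Coulomb case, so that $m_{\mathrm{crit}}$ is the unique solution of $b(m)=2b(m/2)$, and for $m>m_{\mathrm{crit}}$ a single ball has strictly larger energy than two balls of half its mass. Granting (A) to evaluate $e_G$ on small masses, one would show that for $m>m_{\mathrm{crit}}$ the binding (strict subadditivity) inequality $e_G(m)<e_G(t)+e_G(m-t)$ fails for the splitting $t=m/2$, so that the infimum is only approached in the limit of infinite separation of two half-mass balls and is not attained; the dichotomy alternative of concentration--compactness then excludes concentration of a minimizing sequence, yielding nonexistence. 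The delicate point is that one must verify no connected, or otherwise clustered, competitor beats the two far-apart balls, and that the relevant threshold is exactly $m_{\mathrm{crit}}$ rather than the (smaller) mass minimizing $e_G(m)/m$; making this sharp again funnels back to precise knowledge of $e_G$ below $m_{\mathrm{crit}}$.

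The heart of the matter is thus (A), the global minimality and uniqueness of the ball up to $m_{\mathrm{crit}}$. The natural route is perturbative: combine the sharp quantitative isoperimetric inequality with a coercive lower bound for the second variation of the Riesz term at the ball, in the spirit of the nearby-ball analysis of \cite{FFM+2015}, to obtain strict minimality of $B$ among $L^1$-close competitors, and then upgrade this local statement to a global one by a selection-principle or regularity argument ruling out distant competitors. This is exactly the step I expect to be the main obstacle. The perturbative method is known to reach only a small threshold---the explicit bound $m_{\ball}\ge 1$ of \cite{CR2025} lies far below $m_{\mathrm{crit}}$---and the present paper's observation that the energy-per-mass ratios of balls and of long cylinders are \emph{surprisingly close} shows that any comparison- or competitor-based argument operates with almost no margin near $m_{\mathrm{crit}}$. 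Closing the gap between local stability and global minimality over the entire range $m\le m_{\mathrm{crit}}$, for which no rigidity mechanism is presently available in three dimensions, is precisely what keeps the statement a conjecture.
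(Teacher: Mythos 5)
There is nothing in the paper to compare your attempt against: the statement is \cref{conj:gamow}, which the paper records as an open conjecture and does not prove. Your decision to present a strategy together with an identification of the obstruction, rather than a purported proof, is therefore the correct reading of the situation. Your reduction is also accurate as far as it goes: $m_{\ball}\le m_{\mathrm{ex}}$ holds by definition, $m_{\mathrm{ex}}\ge m_{\mathrm{crit}}$ is indeed the result of \cite{FN2021} cited in the introduction, the scaling $b(m)=c_1m^{2/3}+c_2m^{5/3}$ is correct for $n=3$, $\alpha=1$, and the bound $m_{\ball}\ge 1$ of \cite{CR2025} is, as you say, far below $m_{\mathrm{crit}}$, so the perturbative route of \cite{FFM+2015} is currently insufficient for your step (A).

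Two points from the paper sharpen your discussion of the obstruction. First, what the paper actually contributes to \cref{conj:gamow} is a consistency check, not progress toward a proof: \cref{mainprp_prp2} shows $\rho^{\ball}_{R_\alpha}<\rho^{\cyl}_{R_\alpha}$ for integer $\alpha\in(1,n)$, $n\ge 3$, so the cylinder competitor does not disprove the conjecture, while \cref{rmk_closeto1} quantifies how thin the margin is. Second --- and this is a constraint your outline should incorporate --- \cref{mainthm:thm1} shows that the exact analogue of the conjecture is \emph{false} for $R_{1,\kappa}$ and $Y_{1,\kappa}$, kernels that coincide with, or closely resemble, $R_1$ at short range. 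Hence any viable proof must exploit the unbounded range of the Coulomb tail in an essential way. This is visible in both halves of your plan: your nonexistence step (B) is actually false for every truncated kernel (by \cite{Rig2000a}, minimizers exist for all masses whenever $G$ is compactly supported), and the cylinder competitors that defeat balls in the screened case are excluded for $R_1$ only because $\sigma^{\cyl}_{R_1}(l)=+\infty$ for every $l$. A concentration--compactness or stability argument that does not ``see'' the difference between $R_1$ and $R_{1,\kappa}$ for large $\kappa$ cannot close either (A) or (B); this is a more precise formulation of why the statement remains a conjecture.
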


The community expects the conjecture to hold as well in arbitrary dimension and for any $\alpha\in(0,n)$.
\begin{mainconj}\label{conj:gamow2}
Let $n\ge 2$ and $0<\alpha<n$, then $m_{\ball}=m_\mathrm{ex}=m_\mathrm{crit}$.
\end{mainconj}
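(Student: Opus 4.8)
The plan is to split \cref{conj:gamow2} into the two assertions that are not automatic, after first recording the elementary chain $m_{\ball}\le m_\mathrm{crit}\le m_\mathrm{ex}$. Indeed balls minimize for $m\le m_{\ball}$, so $m_{\ball}\le m_\mathrm{ex}$; the bound $m_\mathrm{crit}\le m_\mathrm{ex}$ is the result of~\cite{FN2021} quoted above; and $m_{\ball}\le m_\mathrm{crit}$ follows from a one-line splitting test, since for $m\le m_{\ball}$ one has $\cE_G(B_m)=e_G(m)\le 2\cE_G(B_{m/2})$ (take two balls of mass $m/2$ infinitely far apart), and by the scaling of perimeter and Riesz energy this is exactly the inequality $m\le m_\mathrm{crit}$. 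It therefore remains to prove \textbf{(S1)} that the ball is the unique global minimizer for every $m\le m_\mathrm{crit}$, giving $m_{\ball}\ge m_\mathrm{crit}$, and \textbf{(S2)} that no minimizer exists for $m>m_\mathrm{crit}$, giving $m_\mathrm{ex}\le m_\mathrm{crit}$.

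For \textbf{(S1)} I would control two disjoint families of competitors. \emph{Nearby} sets are handled by stability: combine the sharp quantitative isoperimetric inequality, bounding $P(E)-P(B_m)$ from below by the squared Fraenkel asymmetry of $E$, with a matching upper bound for the defect of $\int_{E\times E}R_\alpha$ obtained from its explicit second variation at the ball expanded in spherical harmonics, and feed this into a regularity-and-selection scheme as in~\cite{Jul2014,BC2014,FFM+2015}. This yields uniqueness of the ball among sets close to it, hence $m_{\ball}>0$, with the explicit Coulomb bound of~\cite{CR2025} as a quantitative instance. \emph{Already split} competitors are handled by concentration-compactness: a minimizing sequence either converges to a minimizer or breaks into pieces $m=\sum_i m_i$ that escape to infinity with $e_G(m)=\sum_i e_G(m_i)$, and the relevant split at the threshold, two equal balls, is strictly beaten by the single ball for $m<m_\mathrm{crit}$ by the definition of $m_\mathrm{crit}$.

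Statement \textbf{(S2)} is the nonexistence direction. By the concentration-compactness criterion used in~\cite{KM2014,LO2014,FN2021}, a minimizer at mass $m$ exists if and only if $m\mapsto e_G(m)$ is strictly subadditive at $m$, so the goal is to show that strict subadditivity breaks down for every $m>m_\mathrm{crit}$; granting \textbf{(S1)} at masses $\le m_\mathrm{crit}$ so that $e_G(m/2)=\cE_G(B_{m/2})$, the definition of $m_\mathrm{crit}$ singles out the equal splitting $e_G(m)=2e_G(m/2)$ as the expected mechanism. Nonexistence above \emph{some} finite threshold is already known for $\alpha\le2$ from these works; the sharp content of \cref{conj:gamow2} is that the threshold coincides with $m_\mathrm{crit}$.

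The main obstacle lies in the \emph{intermediate} regime inside \textbf{(S1)}. The stability analysis rules out only competitors close to a ball, while concentration-compactness rules out only competitors that are already well separated; neither controls the moderately elongated shapes---long cylinders, dumbbells and their relatives---interpolating between the two extremes, and it is precisely there that the stabilizing perimeter and the destabilizing nonlocal repulsion are in near-perfect balance. This is the phenomenon quantified later in the paper, where the energy-per-mass ratios of balls and of long cylinders turn out to be surprisingly close, so that any global argument must be extraordinarily sharp to exclude such shapes all the way up to $m_\mathrm{crit}$. No currently available quantitative inequality is strong enough to close this gap, which is why the conjecture, though consistent, is delicate; for $\alpha>2$ with $n\ge3$ there is the further difficulty that even the finiteness of $m_\mathrm{ex}$ is not established.
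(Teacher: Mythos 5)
The statement you were asked to prove is \cref{conj:gamow2}, which the paper presents as a \emph{conjecture}: it is stated as an expectation of the community, the paper contains no proof of it, and no proof exists in the literature. So there is no paper proof to compare against. The only material the paper contributes toward \cref{conj:gamow2} is a consistency check, namely \cref{mainprp_prp2} (proved as \cref{prop_Riesz_rapport>1}), which shows $\rho_{R_\alpha}^{\ball}<\rho_{R_\alpha}^{\cyl}$ for $n\ge 3$ and integer $\alpha\in(1,n)$ --- that is, the cylinder-comparison mechanism the paper uses to \emph{disprove} the analogous statement for truncated Coulomb and Yukawa kernels does not disprove it in the Riesz case --- together with \cref{rmk_closeto1}, which observes that the ratio $\rho_{R_\alpha}^{\cyl}/\rho_{R_\alpha}^{\ball}$ exceeds $1$ only barely.

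Your proposal is internally honest, but it is a research program, not a proof, and you say so yourself. The elementary chain $m_{\ball}\le m_\mathrm{crit}\le m_\mathrm{ex}$ is fine: the splitting test for $m_{\ball}\le m_\mathrm{crit}$ and the quotation of \cite{FN2021} for $m_\mathrm{crit}\le m_\mathrm{ex}$ are correct, with the caveat that for $\alpha>2$ and $n\ge 3$ even the existence of a sharp threshold $m_\mathrm{ex}$ is not established, so the chain itself is conditional there. But your steps \textbf{(S1)} and \textbf{(S2)} are precisely the open content of the conjecture: stability arguments control only competitors close to a ball (giving $m_{\ball}>0$, and the explicit bound of \cite{CR2025} in the Coulomb case, but nothing reaching $m_\mathrm{crit}$); concentration-compactness controls only competitors that have already split into distant pieces; and no known technique handles the intermediate regime of elongated sets. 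Your closing paragraph correctly identifies this as the fatal gap --- indeed the near-equality of $\rho_{R_\alpha}^{\ball}$ and $\rho_{R_\alpha}^{\cyl}$ quantified in \cref{rmk_closeto1} shows how little room any global argument has, which is exactly what the paper means by calling the conjecture delicate. The verdict is therefore: genuine gap. The conjecture remains open, and what you wrote should be read as an accurate map of why, not as a proof.
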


In this work, we prove that while \cref{conj:gamow} may be true for Riesz kernels, it can be
disproved for kernels that decay faster at infinity, in particular for some truncated Coulomb
potentials and some Yukawa potentials.


\begin{mainthm}\label{mainthm:thm1}
Let $n=3$. Considering Problem~\cref{minpb} with either truncated Coulomb potentials $G=R_{1,\kappa}$
or Yukawa potentials $G=Y_{1,\kappa}$, there exists a cut-length $\kappa>0$ and a mass
$m>0$ such that the minimization problem admits a connected minimizer which is not a ball. Moreover 
balls are not minimizing in the class of sets of mass $m$.
\end{mainthm}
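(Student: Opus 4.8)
The plan is to exploit that for the screened kernels each point sees only a finite amount of interaction. Set $c_G:=\int_{\R^3}G<\oo$ (one computes $c_G=4\pi\kappa^2$ for the Yukawa kernel and $c_G=2\pi\kappa^2$ for the truncated Coulomb kernel). Writing $I(E):=\int_{E\times E}G(x-y)\,dx\,dy$ and introducing the nonlocal perimeter $J(E):=\int_E\int_{\R^3\sm E}G(x-y)\,dx\,dy$, one has $I(E)=c_G|E|-J(E)$, hence
\[
\cE_G(E)=P(E)+c_G|E|-J(E).
\]
Since the bulk term $c_G|E|$ is common to all competitors of mass $m$, minimizing $\cE_G$ at fixed mass amounts to minimizing $P(E)-J(E)$. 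Now $P$ is minimized by the ball, whereas by the Riesz rearrangement inequality $I$ is maximized — equivalently $J$ is minimized — by the ball, so the two terms genuinely compete, and a non-spherical minimizer can appear only when the nonlocal term is strong enough to overcome the perimeter. The governing quantity is the effective surface tension $\tau:=1-\sigma_G$ of a flat interface, where $\sigma_G:=\int_0^\oo t\,g(t)\,dt$ and $g(t):=\int_{\R^2}G(t,w)\,dw$ is the one-dimensional marginal of $G$.

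The heart of the proof is a comparison of the energy-per-mass ratio $\cE_G(E)/|E|$ for two families. For balls one checks $\cE_G(B)/|B|\to c_G$ as $|B|\to\oo$, since both $P(B)/|B|$ and $J(B)/|B|$ tend to $0$. For the capped cylinder $C_{\rho,L}$ (a cylinder of radius $\rho$ and length $L$ closed by two hemispherical caps), letting $L\to\oo$ at fixed $\rho$ gives $\cE_G(C_{\rho,L})/|C_{\rho,L}|\to\gamma(\rho)$, the energy per unit mass of the infinite cylinder of radius $\rho$, the caps contributing an $O(1)$ correction that is washed out in the per-mass ratio. Writing $\gamma(\rho)=c_G+(2\pi\rho-J_\oo(\rho))/(\pi\rho^2)$, with $J_\oo(\rho)$ the nonlocal perimeter per unit length, one has $\gamma(\rho)<c_G$ precisely when $J_\oo(\rho)>2\pi\rho$. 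For $\rho\gg\kappa$ the boundary is nearly flat and $J_\oo(\rho)/(2\pi\rho)\to\sigma_G$, so $\gamma(\rho)<c_G$ as soon as $\sigma_G>1$. A direct computation yields $g(t)=2\pi\kappa e^{-|t|/\kappa}$ and $\sigma_G=2\pi\kappa^3$ in the Yukawa case, and $g(t)=2\pi(\kappa-|t|)_+$ and $\sigma_G=\pi\kappa^3/3$ in the truncated Coulomb case; thus $\sigma_G>1$ — i.e. $\tau<0$ and $\inf_\rho\gamma(\rho)<c_G$ — exactly for $\kappa>(2\pi)^{-1/3}$, respectively $\kappa>(3/\pi)^{1/3}$.

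Fixing $\kappa$ in this range, one selects $\rho$ with $\gamma(\rho)<c_G$ and lets $m\to\oo$: then $\cE_G(C_{\rho,L})/m\to\gamma(\rho)<c_G=\lim\cE_G(B)/m$, so for all large $m$ the capped cylinder of mass $m$ strictly beats the ball of mass $m$, proving that balls are not minimizing. Existence of a minimizer at every mass follows from the integrability of $G$: mass cannot escape to infinity because splitting a large drop leaves the saturated interaction essentially unchanged while increasing the perimeter, so a minimizer $E_m$ exists, and $\cE_G(E_m)\le\cE_G(C_{\rho,L})<\cE_G(B)$ forces it to be non-spherical. Connectedness of (a) minimizer then follows from a no-splitting argument: one shows that $m\mapsto e_G(m)$ is strictly subadditive in this regime, so a disconnected competitor — whose energy dominates the sum of the energies of its components — can never be optimal.

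The main obstacle is the rigorous control of the nonlocal terms for the cylinder. Replacing the leading-order flat-interface heuristic by honest inequalities requires estimating $J_\oo(\rho)$ — the nonlocal perimeter of a genuine cylinder — together with its curvature correction, and exhibiting explicit $\rho$ and $\kappa$ for which $J_\oo(\rho)>2\pi\rho$ with a quantified margin; one must also control the end-cap contributions to pass from the infinite cylinder to a finite-mass competitor, and make the existence and strict-subadditivity arguments precise. The delicate point, underscored by the contrast with the unscreened Riesz case where $c_G=+\oo$ and the ratios for balls and cylinders turn out to be extremely close, is that the favorable window in $\kappa$ and the gap $c_G-\gamma(\rho)$ are small, so the estimates must be sharp.
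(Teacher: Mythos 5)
Your decomposition $\cE_G(E)=P(E)+c_G|E|-J(E)$, the computation of $c_G$, of the marginal $g$, and of $\sigma_G$ are all correct, and your thresholds ($\kappa>(2\pi)^{-1/3}$ for Yukawa, $\kappa>(3/\pi)^{1/3}$ for truncated Coulomb) coincide exactly with the paper's $\kappa_{\min}$. But there is a decisive gap: you compare the cylinder ratio $\gamma(\rho)$ to the wrong benchmark. The quantity $c_G$ is only the limit of $\cE_G(B_R)/|B_R|$ as $R\to\oo$, whereas the relevant benchmark is $\rho_G^{\ball}=\inf_{R>0}\cE_G(B_R)/|B_R|$, and precisely in the regime $\sigma_G>1$ where you work this infimum is attained at a \emph{finite} radius with $\rho_G^{\ball}<c_G$ strictly: this is the content of \cref{prp_eballriesztrunc} and \cref{prp_eballyukawa}, where $\lambda_*>0$ exactly when $\kappa$ exceeds your threshold. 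Indeed, the flat-interface expansion you apply to the cylinder applies equally to the ball, $\cE_G(B_R)/|B_R|\approx c_G+3(1-\sigma_G)/R$, so for $\sigma_G>1$ large balls also beat $c_G$, and at this order balls and cylinders are indistinguishable. The inequality that is actually needed, $\rho_G^{\cyl}<\rho_G^{\ball}$ (\cref{mainprp_prp1}), is decided entirely by curvature corrections at radii comparable to $\kappa$, with a razor-thin margin: the paper's certified values are $6.60$ vs.\ $6.80$ for truncated Coulomb at $\kappa=11/10$, and $3.8747$ vs.\ $3.8755$ for Yukawa at $\kappa=0.56$. A leading-order argument cannot resolve this; it is exactly why the paper derives exact closed formulas for both ratios via the slicing formula of \cref{prp_1Dslices} and certifies the strict inequality by symbolic computation.

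The second gap is logical: even granting that for large $m$ a capped cylinder beats the single ball of mass $m$, this only shows that this one competitor is not a minimizer; it does not produce a connected non-spherical minimizer at that mass. The better ball-type competitor of mass $m$ is a union of many far-apart balls of the optimal radius $R_*$, whose energy/mass ratio is (essentially) $\rho_G^{\ball}<c_G$, and your inequality $\gamma(\rho)<c_G$ does not exclude that minimizers are such configurations, or balls of smaller mass. The paper closes this loophole with the generalized-minimizer framework (\cref{prp_existmingen} and \cref{prp_ecyleball_nonsphere}): generalized minimizers exist, have finitely many connected components, each component is a classical minimizer of its own mass, and under the contradiction hypothesis each component must be a ball, forcing $\hat e_G(m)/m\ge\rho_G^{\ball}$ for \emph{every} $m$; only then does $\sigma_G^{\cyl}(l)<\rho_G^{\ball}$ yield, at a single stroke, a mass admitting a connected minimizer that is not a ball together with the non-minimality of balls at that mass. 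Your substitutes for this machinery --- classical existence at every mass for Yukawa, and strict subadditivity of $e_G$ --- are asserted rather than proved: classical existence for Yukawa is not established (the paper only has generalized minimizers there), and strict subadditivity is delicate (for the compactly supported kernel it fails precisely when disconnected minimizers exist, which is what you are trying to rule out). In any case both would still have to be combined with the comparison against $\rho_G^{\ball}$, not $c_G$.
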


In dimension $n=2$ the existence of non-spherical minimizers for kernels $Y_{1,\kappa}$ follows from the recent work~\cite{MNS2025}.  As far as we know, this work is the first to evidence such a phenomenon for a repulsive, radial and radially decreasing kernel. The authors consider a precise asymptotic regime of $m_j\to+\oo$ and $\kappa_j\dw\sqrt{2\pi}$ for which the (properly rescaled) energy $\Gamma$-converges to a convex  combination of the perimeter and of the elastica functional. If the elastica functional dominates the perimeter term, minimizers are rings of the form $B_R(x)\sm\ov{ B_r}(y)$. As a consequence there exists $\ov\kappa>\sqrt{2\pi}$ such that for $\sqrt{2\pi}<\kappa<\ov\kappa$ there is some large mass $m(\kappa)$ such that minimizers of~\cref{minpb} with $n=2$ and $G=Y_{1,\kappa}$ exist but  are not disks. Moreover, their work suggests that these minimizers are rings. In contrast the present work applies to the dimension $n=3$ and for potentials that are screened variants of the classical Coulomb potential. Our approach by contradiction does not provide any insight into to the shape of such minimizers.

Let us also point out that non-spherical minimizers have also been observed in a related problem in
\cite[Theorem~2.8]{MS2019} in 2D with a radial but not radially-monotonic repulsive kernel, while
\textsl{stable} critical points in 3D have already been described in \cite{Fra2019}, but these were
not minimizers.
Note that non-spherical minimizers naturally appear when anisotropy is introduced in the energy. For
instance, \cite{CNT2022} focuses on anisotropic perimeters, while \cite{KS2022} considers the
repulsive non-local energy $\|\partial_1 \ind_E\|_{H^{-1}}^2$ which penalizes variations only in one
direction.
\medskip

Let us sketch the proof of \cref{mainthm:thm1}. The first ingredient is the notion of generalized minimizers. Given a kernel $G$ and a mass $m>0$, we relax the problem~\cref{minpb} and consider
\be
\label{genminpb}
\hat e_G(m) = \inf~ \lt \{\sum_i \cE_G(E^i) ~:~ E^i\sub\R^n \text{ s.t. }
\sum_i|E^i|=m\rt\}.
\ee
We will see that in the settings we consider, minimizers for~\cref{genminpb} always exist and that
we can assume that they are finite sequences $(E^1,E^2,\dots,E^k)$ of connected sets. Observe that
for each  $1\le i\le k$, $E^i$ is a minimizer of $\cE_G$ among sets of mass $m_i\coloneqq|E^i|$. 

We now reason by contradiction.
\begin{mainhypothesis}\label{Intro_hyp}
For any mass $m>0$:
\begin{itemize}[leftmargin=*]
\item either~\cref{minpb}  admits no connected minimizer,
\item or the  balls of mass $m$ are the minimizers of~\cref{minpb}.
\end{itemize}
\end{mainhypothesis}

Assuming \cref{Intro_hyp}, we see that for any $m>0$, 
\[
\hat e_G(m) = \inf  \lt \{\sum_i \cE_G(B_{r_i}) ~:~ (B_{r_i})_i \text{ sequence of balls with }
\sum_i|B_{r_i}|=m\rt\}.
\]  
Considering the energy per unit of mass, it follows that for $m>0$
\be\label{intro_def_eGball}
\rho^{\ball}_G\coloneqq \inf \lt\{ \frac{\cE_G(B_R)}{|B_R|} ~:~ B_R\sub \R^n \rt\}\le\dfrac{\hat e_G(m)}m.
\ee
By definition  $\rho^{\ball}_G$ is the best energy/mass ratio for balls.

We now try to contradict~\cref{intro_def_eGball} by considering long cylinders as candidates for the energy/mass ratio. Namely, denoting  $C_{l,L}\coloneqq B_l^{n-1}\times (0,L)$ we set 
\[
\rho^{\cyl}_G\coloneqq \inf_{l>0} \lim_{L\up\oo} \dfrac{\cE_G(C_{l,L})}{|C_{l,L}|}.
\]
Using $(E^1,E^2,\dots)=(C_{l,L},\void,\void,\dots)$ in~\cref{genminpb}  we see that $\hat e_G(|C_{l,L}|)\le \cE_G(C_{l,L})$ and we deduce
\be\label{intro_contradiction_step_2}
\inf_{m>0} \dfrac{\hat e_G(m)}m \le \inf_{l,L>0} \dfrac{\cE_G(C_{l,L})}{|C_{l,L}|} \le \rho^{\cyl}_G.
\ee
The key is now the following result.
\begin{mainprp}\label{mainprp_prp1}
Let $n=3$ and $G$ be either $R_{1,\kappa}$ or $Y_{1,\kappa}$. There exists $\kappa>0$ such
that
\[
\rho^{\cyl}_G < \rho_G^{\ball}.
\]
\end{mainprp}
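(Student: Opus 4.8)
The plan is to render both infima defining $\rho^{\cyl}_G$ and $\rho^{\ball}_G$ completely explicit and then to locate a screening length $\kappa$ at which an infinitely long cylinder strictly undercuts every ball. I first treat the cylinder by letting $L\to\infty$ with $l$ fixed. The two caps of $C_{l,L}$ contribute a perimeter $O(1)$ while the lateral boundary contributes $2\pi l\,L$, and by translation invariance along the axis the interaction energy per unit length converges, so that
\[
\lim_{L\to\infty}\frac{\cE_G(C_{l,L})}{|C_{l,L}|}=\frac{2}{l}+\frac{1}{\pi l^2}\int_{D_l\times D_l}\ov G(x'-y')\,dx'\,dy',\qquad \ov G(x')\coloneqq\int_\R G\big(\sqrt{|x'|^2+t^2}\,\big)\,dt,
\]
where $D_l\sub\R^2$ is the disk of radius $l$. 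Here the role of screening is decisive: for the pure Riesz kernel $\ov G\equiv+\infty$ (the axial integral diverges logarithmically), so infinite cylinders have infinite energy per mass, whereas for $G=R_{1,\kappa}$ one gets the explicit planar kernel $\ov G(x')=2\log\frac{\kappa+\sqrt{\kappa^2-|x'|^2}}{|x'|}$ for $|x'|<\kappa$ (and $0$ otherwise), and for $G=Y_{1,\kappa}$ one gets $\ov G(x')=2K_0(|x'|/\kappa)$ with $K_0$ the modified Bessel function. In both cases $\rho^{\cyl}_G$ is finite, and the cylinder problem becomes a two–dimensional droplet problem for the reduced kernel $\ov G$.

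For the ball I would use the closed form of the Coulomb-type self-energy: $\int_{B_R\times B_R}|x-y|^{-1}\,dx\,dy=\tfrac{32\pi^2}{15}R^5$ when the cutoff is inactive ($\kappa\ge 2R$), and an equally elementary piecewise-explicit expression when it is active; the Yukawa self-energy of a ball is likewise explicit through exponentials. This yields a single-variable function $R\mapsto\cE_G(B_R)/|B_R|$ to be minimized. It is also convenient to factor out the screening length by the scaling $E=\kappa\,\wt E$, under which
\[
\frac{\cE_G(E)}{|E|}=\frac1\kappa\,\frac{P(\wt E)}{|\wt E|}+\kappa^2\,\frac{I_1(\wt E)}{|\wt E|},
\]
with $I_1$ the interaction for the unit-cutoff kernel; this exhibits both $\rho^{\ball}_G(\kappa)$ and $\rho^{\cyl}_G(\kappa)$ as a competition between a surface term $\propto\kappa^{-1}$ and a bounded bulk term $\propto\kappa^2$, and makes the dependence on $\kappa$ transparent.

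To decide where the cylinder wins, I would use the short-range heuristic $\int_{E\times E}G\approx(\int G)\,|E|-\sigma_G\,P(E)$ with effective surface tension $\sigma_G=\tfrac12\int_{\R^3}|z_3|\,G(z)\,dz$; for $G=R_{1,\kappa}$ one computes $\sigma_G=\tfrac{\pi\kappa^3}{3}$, so the effective perimeter coefficient $1-\sigma_G$ changes sign at $\kappa=(3/\pi)^{1/3}$. Beyond this threshold, additional surface \emph{lowers} the energy, which structurally favors the large surface-to-volume cylinder over the ball; this pinpoints the window $\kappa\gtrsim(3/\pi)^{1/3}$ in which to test the inequality. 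The rigorous comparison then consists of two one-sided estimates: an upper bound for $\rho^{\cyl}_G$ obtained by inserting a single well-chosen radius $l$ of order $\kappa$ into the reduced functional above (no optimization required), and a lower bound for $\rho^{\ball}_G$ valid \emph{uniformly in $R$}, obtained from the closed-form ball energy. Choosing $\kappa$ so that the explicit upper and lower bounds cross yields $\rho^{\cyl}_G<\rho^{\ball}_G$. The Yukawa case runs identically, with the Bessel profile $2K_0$ and the exponential self-energy replacing the sharp-cutoff expressions.

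The main obstacle is the thinness of the margin. As the geometric scale grows, both ratios converge to the same bulk value ($\sim 2\pi\kappa^2$ for the truncated Coulomb kernel), and in the nearly unscreened regime the ball is expected to win only by a hair (the two ratios being extremely close), so the sign of $\rho^{\ball}_G-\rho^{\cyl}_G$ is settled precisely at scales $l,R\sim\kappa$ — exactly where the surface-tension expansion breaks down and cannot be trusted. The crux is therefore to produce estimates of the elliptic-integral-type shape energies sharp enough to certify a \emph{strict} inequality rather than a numerical near-tie, the most delicate point being the lower bound on the ball ratio over the entire range of radii rather than merely near its minimizer.
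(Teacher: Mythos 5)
Your proposal follows essentially the same route as the paper: pass to the infinite-cylinder limit with the reduced planar kernel (the $2\argth$-type logarithmic kernel for $R_{1,\kappa}$, $2K_0(|x'|/\kappa)$ for Yukawa), use closed-form ball self-energies to reduce $\rho_G^{\ball}$ to a one-variable minimization, and certify a strict crossing of one-sided bounds at a specific $\kappa$ — and your heuristic window $\kappa\gtrsim(3/\pi)^{1/3}$ (resp.\ $(2\pi)^{-1/3}$ for Yukawa) matches exactly the paper's thresholds $\kappa_{\min}$, with the paper taking $\kappa=11/10$ (resp.\ $\kappa=0.56$). The one step you describe but do not execute — the certified numerical comparison, which the paper carries out symbolically via an exact Catalan-constant evaluation of $\sigma^{\cyl}_{R_{1,\kappa}}(\kappa/2)$ against the explicit minimum $f_{1,\kappa}(\lambda_*)$, and for Yukawa via convexity/tangent-line lower bounds on the ball ratio together with monotonicity bounds on $sK_0(s/\kappa)I(s)$ for the cylinder — is precisely where the substance of the paper's proof lies, and you correctly identify it as the crux.
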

\cref{mainprp_prp1} and~\cref{intro_contradiction_step_2} imply
\[
\inf_{m>0} \dfrac{\hat e_G(m)}m  < \rho_G^{\ball},
\]
which contradicts~\cref{intro_def_eGball}. It follows that \cref{Intro_hyp} were wrong and there exists some mass $m>0$ for which~\cref{minpb} admits connected minimizers, none of which is a ball. This proves \cref{mainthm:thm1}.


We are left to establish \cref{mainprp_prp1} and thus to estimate the quantities  $\rho^{\cyl}_G$ and $\rho_G^{\ball}$.
For this we use a slicing argument to rewrite the self-interaction of a \textsl{convex} set
$E\sub\R^n$ as an integral over all one-dimensional slices of $E$ (see \cref{prp_1Dslices}), where the integrand depends only
on the length of each slice:
\[
\int_{E\times E} G(x-y)\,dx\,dy
= \int_{\Sp^{n-1}} \int_{\sigma^\perp}
S_G\lt(\h^1(E\cap [y+\R\sigma])\rt)\,dy\,d\h^{n-1}(\sigma).
\]
Here $\h^d$ is the $d$-dimensional Hausdorff measure in $\R^n$, and $S_G$ is a measurable non-nonnegative function which depends
only on $G$ and $n$. While elementary, the computations are quite tedious. They lead to reasonably
simple closed formulas for $\rho^{\cyl}_G$ and $\rho_G^{\ball}$. We then use a symbolic mathematics
library to evaluate them with the precision needed for comparison. We refer the reader to the
interactive Jupyter Notebook provided as supplementary material.
\medskip

By contrast, we show the opposite inequality in the Riesz case $G=R_\alpha$ in arbitrary dimension $n\ge 3$ and for integer parameters $1<\alpha<n$.
\begin{mainprp}\label{mainprp_prp2}
Let $n\ge 3$ and $\alpha\in (1,n)$ be an integer. Then we have
\[
\rho_{R_\alpha}^{\ball}<\rho_{R_\alpha}^{\cyl}.
\]
\end{mainprp}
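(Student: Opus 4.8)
The plan is to exploit the fact that, for a \emph{pure} Riesz kernel, the one–dimensional profile $S_{R_\alpha}$ produced by the slicing identity of \cref{prp_1Dslices} is an exact power. First I would evaluate
\[
S_{R_\alpha}(t)=\int_0^\infty r^{\,n-1-\alpha}(t-r)_+\,dr=\frac{t^{\,n+1-\alpha}}{(n-\alpha)(n-\alpha+1)},
\]
which is finite precisely because $\alpha<n$; this homogeneity is the structural feature absent in the screened cases of \cref{mainprp_prp1}, and it is what makes the Riesz comparison amenable to a closed form rather than a numerical evaluation. Applying the slicing formula to $B_R$, rotational symmetry collapses the integral to a single radial (Beta) integral over the equatorial disc $B_R^{n-1}$, yielding a Gamma-function closed form for $\int_{B_R\times B_R}R_\alpha$ and hence
\[
\frac{\cE_{R_\alpha}(B_R)}{|B_R|}=n\,R^{-1}+c^{\ball}\,R^{\,n-\alpha},
\]
with $c^{\ball}$ explicit.

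For the cylinder I would instead integrate out the axial variable first: writing points as $(\xi,s)\in B_l^{n-1}\times(0,L)$ and using $\int_0^L\!\int_0^L f(s-u)\,ds\,du\sim L\int_{\R}f$ as $L\to\infty$, the interaction per unit length converges to $C_\alpha\int_{B_l^{n-1}\times B_l^{n-1}}|\xi-\eta|^{-(\alpha-1)}$, where $C_\alpha=\int_\R (1+t^2)^{-\alpha/2}\,dt$ is finite exactly because $\alpha>1$. The remaining integral is the $(n-1)$–dimensional Riesz energy of $B_l^{n-1}$, to which the same slicing/Beta computation applies. Combined with the per-length perimeter $(n-1)|B_1^{n-1}|\,l^{\,n-2}$ this gives
\[
\lim_{L\to\infty}\frac{\cE_{R_\alpha}(C_{l,L})}{|C_{l,L}|}=(n-1)\,l^{-1}+c^{\cyl}\,l^{\,n-\alpha},
\]
with $c^{\cyl}$ explicit.

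Both ratios now have the form $\phi(x)=a\,x^{-1}+b\,x^{\gamma}$ with $\gamma=n-\alpha>0$, whose minimum over $x>0$ is $(\gamma+1)\gamma^{-\gamma/(\gamma+1)}a^{\gamma/(\gamma+1)}b^{1/(\gamma+1)}$. Since $\gamma$ and the universal prefactor are shared between the ball and the cylinder, the inequality $\rho^{\ball}_{R_\alpha}<\rho^{\cyl}_{R_\alpha}$ reduces to $(a^{\ball}/a^{\cyl})^{\gamma}(c^{\ball}/c^{\cyl})<1$; inserting the Gamma-function values of $c^{\ball}$, $c^{\cyl}$ and $C_\alpha$ turns this into the single scalar inequality
\[
Q:=\Bigl(\tfrac{n}{n-1}\Bigr)^{n-\alpha+1}\frac{\Gamma(\alpha/2)}{\Gamma((\alpha-1)/2)}\,\frac{\Gamma((2n+1-\alpha)/2)}{\Gamma((2n+2-\alpha)/2)}<1 .
\]

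The delicate point, and the main obstacle, is proving $Q<1$ uniformly: the factor $(n/(n-1))^{n-\alpha+1}>1$ competes against a ratio of Gamma functions that is $<1$, and crude asymptotics $\Gamma(z+\tfrac12)/\Gamma(z)\approx\sqrt z$ are too lossy and would even give the wrong sign (for instance at $n=4,\alpha=3$). Here I would use integrality of $\alpha$. Setting $k=n-\alpha\in\{1,\dots,n-2\}$ and $f(z)=\Gamma(z+\tfrac12)/\Gamma(z)$, the Gamma part of $Q$ equals $f(x)/f(x+k+1)$ with $x=(n-k-1)/2$, and telescopes through $f(z+1)/f(z)=1+\tfrac1{2z}$ into a finite product of consecutive-integer fractions
\[
\frac{f(x+k+1)}{f(x)}=\prod_{j=0}^{k}\frac{n-k+2j}{\,n-k-1+2j\,},
\]
whose numerators $m=n-k,\,n-k+2,\dots,n+k$ are symmetric about $n$. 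Pairing the factor at $m=n-i$ with the one at $m=n+i$ gives $\frac{n^2-i^2}{(n-1)^2-i^2}>\bigl(\tfrac{n}{n-1}\bigr)^2$, which is equivalent to $n^2>(n-1)^2$ and holds for every $0\le i\le k\le n-2$. Multiplying these pairs (with the leftover central factor $\tfrac{n}{n-1}$ when $k$ is even) yields $\prod_{j=0}^k\frac{n-k+2j}{n-k-1+2j}>\bigl(\tfrac{n}{n-1}\bigr)^{k+1}$, which is exactly $Q<1$. The role of integer $\alpha$ is precisely to make $k+1$ an integer so that this finite telescoping and pairing are available; for non-integer $\alpha$ the same conclusion is expected to hold but would require a genuinely harder monotonicity argument for $f$.
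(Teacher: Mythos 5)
Your proposal is correct and follows essentially the same route as the paper: explicit Gamma-function closed forms for $\rho^{\ball}_{R_\alpha}$ and $\rho^{\cyl}_{R_\alpha}$ obtained by slicing, reduction to the single scalar inequality $Q<1$ (which is exactly the reciprocal of the paper's $\tau^\beta>1$ in \cref{prop_Riesz_rapport>1}), and then, for integer $\alpha$, expansion of the Gamma ratios into a finite product of consecutive-integer fractions paired symmetrically about $n$ — your pairwise bound $\frac{n^2-i^2}{(n-1)^2-i^2}>\bigl(\frac{n}{n-1}\bigr)^2$ for $i\neq 0$ is precisely the paper's $c_{n,j}=\frac{1-(j/n)^2}{1-(j/(n-1))^2}>1$, including the same treatment of the leftover central factor in the even case. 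The only blemishes are cosmetic: your one-sided formula for $S_{R_\alpha}$ drops a factor $2$ (which cancels in the ball/cylinder ratio), and the pairing inequality as stated fails (with equality) at $i=0$, but you handle that case separately as the exact central factor, just as the paper does.
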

On the one hand, this is consistent with~\cref{conj:gamow2}. On the other hand we observe that the ratio
\[
\frac{\rho_{R_\alpha}^{\cyl}}{\rho_{R_\alpha}^{\ball}}>1
\]
is surprisingly close to $1$, even in low dimensions, see \cref{rmk_closeto1}. This illustrates how delicate the conjecture is.

\vspace{5mm}

\subsection*{Outline of the paper}

The structure of the paper is as follows. In~\cref{S_gamow}, we recall a few facts concerning
generalized minimizers of Gamow-type problems and prove \cref{prp_ecyleball_nonsphere}, which
explains how \cref{mainthm:thm1} is implied by \cref{mainprp_prp1}. In~\cref{Ss_slicing} we establish a useful slicing formula to express the self-interaction energy of a set $E$ in terms of the self-interaction energies of its one dimensional slices.\\
The three remaining sections are successively devoted to the cases of the Coulomb interaction, the
truncated Coulomb interaction, and the Yukawa interaction. In \cref{S_riesz}, we prove
\cref{mainprp_prp2} and in \cref{S_truncated_riesz,S_yukawa} we prove \cref{mainprp_prp1}, first in
the truncated Coulomb case and then in the Yukawa case.

\subsection*{Notation}

\begin{itemize}[leftmargin=*]
\item We denote by $\h^k$ the $k$-dimensional Hausdorff measure.
\item Given a measurable set $E\sub\R^n$ we write $|E|=\mc{L}^n(E)$ to denote its volume (or mass) and $P(E)$ to denote its perimeter.
\item We denote by $B_r^n$ the $n$-dimensional ball of radius $r$ centered at $0$, or simply $B_r$ when
there is no ambiguity on the dimension. We recall the formulas 
\[
|B_1^n|=
\frac{\pi^{n/2}}{\Gamma\lt(1+\frac{n}{2}\rt)}.
\]
\item By an abuse of notation we let $|\Sp^k|=\h^k(\Sp^k)$ where $\Sp^k\sub\R^{k+1}$ is the unit sphere of dimension $k$. Recall that we have $|\Sp^{n-1}|=P(B_1^n) =n|B_1^n|$.
\item We denote by $C_{l,L}^n$ the ball-based cylinder $C_{l,L}^n\coloneqq B_l^{n-1}\times (0,L)$ (or simply
$C_{l,L}$ when there is no ambiguity).
\item For a generic kernel $G$, we set
\[
\I_G^n(E) \coloneqq \int_{E\times E} G(x-y)\,dx\,dy,\qquad\text{ for every measurable set } E.
\]
With this notation the total energy writes
\[
\cE^n_G=P+\I_G^n.
\]
\item We often drop the superscript $n$ when there is no risk for confusion.
\item For radial kernels we let $G(|x|)\coloneqq G(x)$, by an abuse of notation.
\item The main part of the article consists in the comparison of the following quantities introduced in \cref{dfn_rhoball_rhocyl}.
\[
\rho^{n,\ball}_G\coloneqq \inf \lt\{ \frac{\cE_G(B_R)}{|B_R|} ~:~ B_R\sub \R^n \rt\},
\]
and
\[
\rho^{n,\cyl}_G\coloneqq \inf_{l>0}\sigma_G^{n,\cyl}(l)\qquad\text{where}\qquad\sigma_G^{n,\cyl}(l)\coloneqq  \lim_{L\up\oo} \dfrac{\cE_G(C_{l,L})}{|C_{l,L}|}.
\]
\item In \cref{S_riesz,,S_truncated_riesz,,S_yukawa} we consider respectively the kernels
\[
R_\alpha(x)=|x|^{-\alpha},\qquad R_{\alpha,\kappa}(x)=\ind_{|x|<\kappa}|x|^{-\alpha}\quad\text{and}\quad Y_{\alpha,\kappa}(x) =
e^{-|x|/\kappa}|x|^{-\alpha}.
\]  
\item In~\cref{S_riesz,S_truncated_riesz} we use the notation 
\[
\beta\coloneq n+1-\alpha,
\]
where $n$ is the ambient dimension and where $0<\alpha<n$ is the parameter appearing in the potentials $ \I^n_{R_\alpha}$, $\I_{{R}_{\alpha,\kappa}}^n$.
\end{itemize}

\section{Basic facts on generalizations of Gamow-type problems}\label{S_gamow}

In this section, unless stated otherwise, $G$ is a nonnegative, radial, locally integrable kernel
on~$\R^n$.

\subsection{Minimizers and generalized minimizers}

First of all if $G$ is compactly supported, in particular if $G=R_{\alpha,\kappa}$ then the problem
always admits at least one minimizer, see~\cite[Chapter~5]{Rig2000a}. More precisely, we have the
following.

\begin{prp}
Let $G$ be a nonnegative integrable kernel with compact support in $\R^n$ (not necessarily
radial). Then for every $m>0$, Problem~\cref{minpb} admits a minimizer.
In addition, every minimizer has a finite number of components with finite perimeter $E^i$,
$i=1,\dotsc,N$, which are connected in the measure theoretic sense (as introduced
in~\cite{ACMM2001}). Further, these connected components are bounded.
Finally, each connected component $E^i$ is itself a minimizer of \cref{minpb}
within the class of measurable sets of mass $|E^i|$.
\end{prp}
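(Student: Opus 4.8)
The plan is to run the direct method for existence and then lean on the regularity theory of almost-minimizers of the perimeter for the structural claims, using throughout that $G\ge 0$ is integrable (so the interaction is a bounded, lower-order perturbation of $P$) and that $\supp G$ is compact, say contained in some ball $B_\rho$ (so the energy is additive for well-separated configurations). The basic observation is that, by Young's inequality, the potential $v_E\coloneqq G*\ind_E$ obeys $\|v_E\|_{L^\oo}\le\|G\|_{L^1}$ for every measurable $E$; in particular $\I_G(E)\le\|G\|_{L^1}|E|$, and a direct estimate ($\int_{E^i\times E^j}$ being split via $\ind_{E_k}(x)[\ind_{E_k}(y)-\ind_E(y)]+[\ind_{E_k}(x)-\ind_E(x)]\ind_E(y)$) shows that $E\mapsto\I_G(E)$ is continuous for $L^1$-convergence of characteristic functions. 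For existence I would take a minimizing sequence $(E_k)$ with $|E_k|=m$ and $\cE_G(E_k)\to e_G(m)$; since $\I_G\ge0$ the perimeters $P(E_k)\le\cE_G(E_k)$ are bounded, so a subsequence converges in $L^1\loc$ to some $E$ with $P(E)\le\liminf P(E_k)$. The only genuine difficulty is that mass may escape to infinity and $|E|<m$, and this is exactly where compactness of $\supp G$ is decisive: if a minimizing sequence splits into two lumps whose mutual distance diverges, the cross-interaction eventually vanishes and the energy becomes additive, $e_G(m)=e_G(m_1)+e_G(m_2)$ with $m_1+m_2=m$; conversely two minimizers of masses $m_1,m_2$ placed more than $\rho$ apart form a competitor of additive energy. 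A concentration–compactness argument, with dichotomy iterated finitely many times, then produces a minimizer (possibly disconnected) as a disjoint union of finitely many concentrated lumps. I would cite \cite[Chapter~5]{Rig2000a} for the details, this being precisely the mechanism behind existence for compactly supported kernels.

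For the structure, the bound $\|v_E\|_{L^\oo}\le\|G\|_{L^1}$ shows that any minimizer $E$ is a volume-constrained $(\Lambda,r_0)$-minimizer of the perimeter with $\Lambda=2\|G\|_{L^1}$: for $F$ with $E\triangle F$ compactly contained in a ball of radius $r_0$ one has $P(E)\le P(F)+\Lambda|E\triangle F|$, the volume constraint being restored by the usual volume-fixing variations at controlled cost. The standard theory then yields uniform lower density estimates at a scale $r_*=r_*(n,\Lambda)>0$: there is $c=c(n)>0$ with $|E\cap B_r(x)|\ge c\,r^n$ and $P(E;B_r(x))\ge c\,r^{n-1}$ for every $x\in\partial^*E$ and $r\le r_*$. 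Decomposing $E$ into its at most countably many measure-theoretic connected components $E^i$ of finite perimeter via \cite{ACMM2001}, and using that their reduced boundaries are essentially disjoint (so that near a point of $\partial^*E^i$ the set $E$ coincides, at small scales, with $E^i$), these estimates localize to each component: around any $x\in\partial^*E^i$ one gets $|E^i|\ge c\,r_*^n\eqqcolon m_0>0$, hence there are at most $m/m_0$ components; likewise the perimeter measure of each $E^i$ carries at least $c\,r_*^{n-1}$ on every ball of a maximal $r_*$-separated net, so its support is bounded, and since the exterior of a large ball is connected for $n\ge2$ this forces $E^i$ itself to be bounded.

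Finally, to see that each component minimizes at its own mass, fix $E^{i_0}$ and suppose some $F$ with $|F|=|E^{i_0}|\eqqcolon m_{i_0}$ had $\cE_G(F)<\cE_G(E^{i_0})$. Replacing $E^{i_0}$ by a translate of $F$ placed at distance larger than $\rho$ from all other components removes the (nonnegative) cross-interactions between $E^{i_0}$ and the rest, so the resulting set of mass $m$ has energy $\cE_G(E)+[\cE_G(F)-\cE_G(E^{i_0})]-2\sum_{j\ne i_0}\int_{E^{i_0}\times E^j}G(x-y)\,dx\,dy<\cE_G(E)$, contradicting minimality; hence every $E^i$ minimizes $\cE_G$ among measurable sets of mass $|E^i|$. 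I expect the main obstacle to be the existence part, namely ruling out loss of mass at infinity, precisely because the natural minimizer may be disconnected with arbitrarily far-apart, non-interacting pieces, so one cannot simply confine a minimizing sequence to a fixed ball; the compact support of $G$ is the structural feature that rescues the argument, and I would defer to \cite{Rig2000a} there.
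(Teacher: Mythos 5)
The paper itself gives no proof of this proposition: it is stated as a known result, attributed in the preceding sentence to \cite[Chapter~5]{Rig2000a}, so your reconstruction can only be measured against the standard route --- which is also the route the paper follows for the non-compactly supported kernels in the proof of \cref{prp_existmingen} via \cite{Can2024}. Your overall architecture (the Lipschitz bound $|\I_G(E)-\I_G(F)|\le 2\|G\|_{L^1}|E\triangle F|$, quasi-minimality of minimizers, density estimates, the decomposition of \cite{ACMM2001}, a far-translation replacement argument, and deferring the concentration--compactness existence step to \cite{Rig2000a}) is exactly that standard route. However, one load-bearing step in your structural part is wrong as written.

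The gap is the localization of the density estimates from $E$ to its components. You assert that since the reduced boundaries $\partial^* E^i$ are essentially disjoint, ``near a point of $\partial^* E^i$ the set $E$ coincides, at small scales, with $E^i$'', and you use this both for the per-component mass bound (hence finiteness of $N$) and for boundedness. This implication is false for general finite-perimeter decompositions: two tangent balls form two measure-theoretic components whose reduced boundaries meet at the tangency point, and near that point $E$ coincides with neither component; worse, nothing excludes a priori a component whose reduced boundary is entirely contained in the closure of the union of the other components, in which case the density estimate for $E$ --- which is all you have --- says nothing about $|E^i|$, since the mass $c\,r^n$ in $B_r(x)$ may be shared among many components. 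The repair is to reverse your order, exactly as the paper does for \cref{prp_existmingen}: prove the last claim first (each component minimizes $\cE_G$ at its own mass, by the replacement argument), and only then apply quasi-minimality and density estimates to each $E^i$ separately. Two details then still need care. First, the replacement argument must separate the competitor $F$ from $E\setminus E^{i_0}$ by the interaction range; since neither set is known to be bounded at that stage, first prove that $E$ itself is bounded (this needs no localization: the density estimates for $E$ force its perimeter measure to have bounded support, which for $n\ge2$ makes $E$ essentially bounded), and handle a possibly unbounded $F$ by truncation plus a small rescaling, or by sending the translation to infinity and absorbing the $o(1)$ cross term and mass defect. Second, finiteness of $N$ requires the per-component mass bound to be uniform in $i$; the constant produced by generic volume-fixing variations depends on the set, so instead restore volume by adding a small ball far away, at cost $c_n\delta^{(n-1)/n}+\|G\|_{L^1}\delta$ with $\delta$ the removed volume --- a bound uniform in $i$, and precisely the point where integrability and compact support of $G$ enter.
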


For many other kernels, such as $R_\alpha$ or $Y_{\alpha,\kappa}$, there is always existence of
minimizers in a generalized sense.

\begin{dfn}[Generalized minimizers]~
\begin{enumerate}[(1)]
\item We call \textsl{generalized set of mass $m$} a finite or countable family of measurable sets
$E^i\sub\R^n$ such that $\sum |E^i|=m$. Each set $E^i$ is treated as existing in an independent copy of $\R^n$.
\item We extend the definition of $\cE_G$ to generalized sets by  
\[
\cE_G((E^i))\coloneqq \sum_i \cE_G(E^i).
\]
\item A generalized set $(E^i)$ of mass $m$ is a generalized minimizer of Problem~\cref{minpb} if it minimizes $\cE_G$ among generalized sets of mass $m$.
\end{enumerate}
\end{dfn}

\begin{prp}\label{prp_existmingen}
Let $G$ be either $R_\alpha$, $R_{\alpha,\kappa}$ or $Y_{\alpha,\kappa}$. Then for any mass $m$,
Problem~\cref{minpb} admits a generalized minimizer $E=(E^i)_{i\in I}$. Besides $I$ is a finite set 
and each component $E^i$ is a classical minimizer of \cref{minpb} with mass $|E^i|$, that
is,
\[
\cE_G(E^i) = \min \lt\{\cE(F):F\sub\R^n,\ |F|=|E^i|\rt\}.
\]
If $G$ is either $R_\alpha$ or $Y_{\alpha,\kappa}$, then any component $E^i$ is bounded and connected (in the measure theoretic sense
of~\cite{ACMM2001}).
\end{prp}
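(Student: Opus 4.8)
The plan is to run the direct method, the generalized framework of~\cref{genminpb} being tailored precisely to absorb the only real source of non-compactness in these problems: the escape of mass to infinity. The key observation is that a minimizing sequence for $\hat e_G(m)$ may split into several clusters drifting infinitely far apart, but in the generalized problem this is harmless, since each cluster simply becomes a separate component $E^i$. The one failure mode to rule out is \emph{vanishing} (mass spreading out to zero density), which is forbidden by the perimeter term via the isoperimetric inequality. I would thus organize the proof into four steps: existence of a generalized minimizer by a profile decomposition, classical minimality of each component, finiteness of $I$, and finally connectedness and boundedness in the two non-truncated cases.

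\textbf{Existence.} Take a minimizing sequence of generalized sets for $\hat e_G(m)$. Since $\I_G\ge 0$, the perimeters $\sum_i P(E_k^i)$ are uniformly bounded, so $BV$ compactness together with a concentration-compactness / profile decomposition (as in the compactness lemma of~\cite{FL2015}) produces, after translations, at most countably many limit sets $E^i$ of finite perimeter with mutually diverging centers. Because $G(x)\to 0$ as $|x|\to\oo$ (exactly beyond range $\kappa$ in the truncated case, exponentially for Yukawa, polynomially for Riesz), the cross-interactions between distinct clusters vanish in the limit, so mass and energy pass to the limit additively: $\sum_i |E^i|\le m$ and $\sum_i \cE_G(E^i)\le \hat e_G(m)$, the latter by lower semicontinuity of $P$ and Fatou for $\I_G$ (here $\alpha<n$ guarantees local integrability of $R_\alpha$). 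Finally $\sum_i |E^i|=m$, since a mass deficit would correspond to dispersed mass carrying a definite amount of perimeter energy, which the isoperimetric inequality forbids a minimizing sequence from wasting; equality then forces $(E^i)$ to be a generalized minimizer.

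\textbf{Classical minimality and finiteness.} If some component failed to minimize $\cE_G$ at mass $m_i\coloneqq |E^i|$, replacing it by a nearly optimal competitor of the same mass would strictly lower the total energy, a contradiction; hence $\cE_G(E^i)=e_G(m_i)$ and each $E^i$ is a classical minimizer. For finiteness I use the scaling of small sets: any minimizer of mass $\mu$ satisfies $\cE_G\ge P\ge c_n\mu^{(n-1)/n}$, whereas replacing two components of small masses $\mu_i,\mu_j$ by a single ball of mass $\mu_i+\mu_j$ changes the perimeter by the strictly subadditive quantity $(\mu_i+\mu_j)^{(n-1)/n}-\mu_i^{(n-1)/n}-\mu_j^{(n-1)/n}<0$, while changing the interaction only by a higher-order amount (of order $\mu^{(2n-\alpha)/n}$ for $R_\alpha$, and the same for $Y_{\alpha,\kappa}$ since it agrees with $R_\alpha$ near the origin). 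Thus component masses cannot be arbitrarily small, and since $\sum_i m_i=m$ the index set $I$ is finite.

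\textbf{Connectedness, boundedness, and the main obstacle.} If a component $E^i$ were decomposable in the sense of~\cite{ACMM2001} as $F_1\cup F_2$ with $|F_1|,|F_2|>0$, then $P(E^i)=P(F_1)+P(F_2)$ while $\I_G(E^i)=\I_G(F_1)+\I_G(F_2)+2\int_{F_1\times F_2}G(x-y)\,dx\,dy$; for $G=R_\alpha$ or $Y_{\alpha,\kappa}$ the kernel is strictly positive, so the cross term is strictly positive and splitting $E^i$ strictly lowers the energy, a contradiction, whence $E^i$ is connected. This is exactly where the argument breaks for $R_{\alpha,\kappa}$, since pieces farther apart than $\kappa$ have zero cross-interaction, which is why connectedness is claimed only for the two non-truncated kernels. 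For boundedness I would run the standard truncation argument on a connected minimizer: setting $V(r)\coloneqq |E^i\sm B_r|$, comparison of $E^i$ with $(E^i\cap B_r)$ rescaled back to mass $m_i$ yields, using that the nonlocal potential $y\mapsto\int_{E^i}G(\cd-y)$ is bounded (from the integrability and decay of $G$), a differential inequality of the form $V(r)^{(n-1)/n}\le C(-V'(r))$, which forces $V$ to vanish for $r$ large, so $E^i$ is bounded. I expect this last step to be the delicate one: one must balance the perimeter gained by amputating the outer part against the nonlocal cost of restoring the mass and rule out thin unbounded tentacles, which is precisely what makes the long-range positivity and decay of $G$ essential and, again, explains why the conclusion is stated only for $R_\alpha$ and $Y_{\alpha,\kappa}$.
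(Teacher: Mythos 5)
Your route is genuinely different from the paper's, which does not run the direct method at all: the paper quotes existence from the literature (\cite{KMN2016} for $n=3$, \cite{ABCT2018} for Riesz kernels, \cite{Rig2000a} for compactly supported kernels) and, for Yukawa, verifies the hypotheses of the general framework of \cite{Can2024}; finiteness of $I$ and boundedness of the components then both follow from the density estimates of \cite{Can2024} (each component is a $\rho$-minimizer of the perimeter, hence bounded and of volume bounded below by a fixed fraction of $m$). Your classical-minimality and connectedness steps coincide exactly with the paper's arguments, and your existence and boundedness sketches are standard and viable in outline (to exclude vanishing one needs the nucleation-type lemma packaged inside the compactness lemma of \cite{FL2015}, not the bare isoperimetric inequality, but you cite the right tool).

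The genuine gap is in the finiteness step. Merging two components of masses $\mu_i\le\mu_j$ into a single ball gains at most the isoperimetric subadditivity defect
\[
c_n\lt(\mu_i^{(n-1)/n}+\mu_j^{(n-1)/n}-(\mu_i+\mu_j)^{(n-1)/n}\rt)\ \le\ c_n\,\mu_i^{(n-1)/n},
\]
which is governed by the \emph{smaller} mass. The interaction cost, however, cannot be bounded better than by $\I_G$ of the new ball (the interactions $\I_G(E^i),\I_G(E^j)$ you remove have no useful lower bound for general minimizers), i.e.\ by a quantity of order $(\mu_i+\mu_j)^{(2n-\alpha)/n}\asymp\mu_j^{(2n-\alpha)/n}$, governed by the \emph{larger} mass. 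The gain dominates only when $\mu_i^{(n-1)/n}\gtrsim \mu_j^{(2n-\alpha)/n}$, i.e.\ when the two masses are comparable. So your argument excludes only pairs of small comparable components; it does not exclude a generalized minimizer with infinitely many components whose masses decay so fast that $\mu_{k+1}\ll\mu_k^{\theta}$ with $\theta=(2n-\alpha)/(n-1)>1$, and the asserted conclusion that ``component masses cannot be arbitrarily small'' does not follow.

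The step can be repaired within your scheme by absorbing a small component into the \emph{largest} one rather than merging two small ones. Let $E^1$ have maximal mass $\mu_1$ (a maximizer exists since the masses are summable) and let $E^j$, $j\ne1$, have mass $\mu_j\le\mu_1$. Delete $E^j$ and dilate $E^1$ by $t=(1+\mu_j/\mu_1)^{1/n}$. All three kernels are radially nonincreasing, so $\I_G(tE^1)\le t^{2n}\I_G(E^1)$ for $t\ge1$, and the cost is at most $(t^{n-1}-1)P(E^1)+(t^{2n}-1)\I_G(E^1)\le 3(\mu_j/\mu_1)\,\cE_G(E^1)$, \emph{linear} in $\mu_j$, while the gain is $\cE_G(E^j)\ge P(E^j)\ge c_n\mu_j^{(n-1)/n}$, superlinear. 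Minimality then forces every component other than $E^1$ to have mass at least $\lt(c_n\mu_1/(3\cE_G(E^1))\rt)^n>0$, whence $I$ is finite; alternatively one can invoke density estimates, as the paper does via \cite{Can2024}. A last, smaller inaccuracy: boundedness is not where the truncated kernel causes trouble (the paper obtains boundedness for all three kernels, and your truncation argument works verbatim for $R_{\alpha,\kappa}$); the final sentence of the statement is restricted to $R_\alpha$ and $Y_{\alpha,\kappa}$ solely because of connectedness, whose splitting argument indeed needs $G>0$ on $\R^n\sm\{0\}$.
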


\begin{proof}
There are  several works in the literature proving this result depending on the dimension and the kernels considered. The three-dimensional case with $G=R_1$ can be found in \cite{KMN2016}, and the case $G=R_\alpha$ in any dimension has been treated in \cite{ABCT2018} (see Corollary~1.1 therein). When $G=R_{\alpha,\kappa}$, we have seen that by \cite{Rig2000a}, the problem always admits classical minimizers. 

The existence of generalized minimizers in the case $G=Y_{\kappa,\alpha}$ cannot be found as
it is in the literature  but can be proved with the same methods. Fortunately  the construction of generalized minimizers has been considered in a very general setting in~\cite{Can2024}, for functionals involving the competition between a perimeter-type term and a nonlocal
repulsive term.  One can easily check that for the kernels $R_\alpha$, $R_{\alpha,\kappa}$ and $Y_{\alpha,\kappa}$, the 
functional $\cE^n_G$ satisfies the sets of assumptions denoted by (S1) and (S2) therein, with the choice
\[
V(E,U) \coloneqq \int_{(E\times E)\cap(U\times U)} G(x-y)\,dx\,dy,
\]
and with $f_1(m) = C(n) m^{1-1/n}$ by the standard relative isoperimetric inequality (see e.g.
\cite[Theorem~5.11]{EG2015}).
Thus, by \cite[Proposition~1.1 \& Theorem~1.2]{Can2024} we get existence of generalized minimizers,
which may for now be an infinite countable collection of sets.

Examining  the proof of \cite[Proposition~1.7]{Can2024} (see Step 2, p.  14) it transpires
that, in assumption (H15) the condition
\[
V(tE) \le t^\beta V(E)\quad\text{ for any $t$ such that } |t-1|\le t_0
\]
can be replaced by  
\[
|V(tE)-V(E)| \le C|t-1||E|\quad\text{ for any $t$ such that } |t-1|\le t_0,
\]
for some constant $C=C(G)$. This latter condition holds true for the kernels we consider and with this modification, the 
functionals we consider satisfy the set of assumptions (S4). We conclude that by \cite[Proposition~1.7]{Can2024}, every
component of a generalized minimizer is a $\rho$-minimizer of the perimeter, with $\rho(r) =C(n)r^n$.\\
It is then classical to obtain lower density estimates: the classical perimeter obviously satisfies
the set of assumption (S3) so that \cite[Theorem~1.4]{Can2024} applies (one could also cite
\cite{Tam1984} or \cite[Theorem~21.11]{Mag2012}). As a consequence, every component of a generalized
minimizer of volume $m$ is bounded and carries at least a volume $\delta m=C(m,G)>0$ so that there can only be a finite
number of components with positive measure. 

The fact that every component of a generalized minimizer is a classical minimizer in 
the class of measurable sets with same mass is obvious: otherwise, replacing a component by a set
with same mass and less energy would contradict the minimality of the generalized minimizer.

Finally, let us assume by contradiction that  a component $E^j$ of a generalized minimizer $(E^i)$ is not connected in the measure theoretic sense. We can then write 
\[
E^j=E^{j,1}\cup  E^{j,2},
\]
with 
\[
|E^{j,1}|,||E^{j,2}|>0,\qquad|E^{j,1}\cap  E^{j,2}|=0\qquad\text{and} \qquad P(E^j)=P(E^{j,1})+P(E^{j,2}).
\]
If $G=R_\alpha$ or $G=Y_{\alpha,\kappa}$, we have $G>0$ in $\R^n\sm\{0\}$ so that 
\[
\I^n_G(E^j)=\I^n_G(E^{j,1})+\I^n_G(E^{j,2})+ 2\int_{E^{j,1}\times E^{j,2}}G(x-y)\,dx\, dy>\I^n_G(E^{j,1})+\I^n_G(E^{j,2}).
\]
We then have 
\[
\cE^n_G(E^j)<\cE^n_G(E^{j,1})+\cE^n_G(E^{j,2}),
\]
and one can decrease the energy of $(E^i)$ by replacing the component $E^j$ by the pair $(E^{j,1}, E^{j,2})$. This contradicts the definition of a generalized minimizer. We conclude that for $G=R_\alpha$ or $G=Y_{\alpha,\kappa}$ the components of the generalized minimizers are connected  in the measure theoretic sense.
\end{proof}


When $G$ is compactly supported, the problem always admits minimizers by
~\cite[Chapter~5]{Rig2000a}, and a possible generalized minimizer is just $(E,\void,\void,\dots)$
where $E$ is a classical minimizer. If $E$ is not connected, another generalized minimizer is
$(E^1,\cdots,E^k)$ where the $E^i$'s are the connected components of $E$. We favor the latter in
order to work  within a unified framework whether the kernel is compactly supported (as the
$R_{\alpha,\kappa}$'s) or not (as the $Y_{\alpha,\kappa}$'s).

\begin{dfn}\label{dfn_compactcase}
When the kernel  $G$ is compactly supported, we modify the definition of generalized minimizers by
requiring  that each component of a generalized minimizer is connected in the measure theoretic sense
of~\cite{ACMM2001}. That way, the last conclusion of \cref{prp_existmingen} also holds for $G=R_{\alpha,\kappa}$.
\end{dfn}

The core of the article consists in the comparison of the following quantities.
\begin{dfn}\label{dfn_rhoball_rhocyl}
Let $G:\R^n\to\R$ be a non negative and measurable radial kernel. We define the optimal energy/mass ratio of balls by 
\be\label{def_rho_G^ball}
\rho_G^{\ball}=\rho_G^{n,\ball}\coloneqq\inf \lt\{ \frac{\cE_G(B_R)}{|B_R|} ~:~ B_R\sub \R^n \rt\}.
\ee
Recall that $C_{l,L}=B_l^{n-1}\times (0,L)$. We define for $l>0$, the energy/mass ratio of an infinite cylinder with base $B_l^{n-1}$ as 
\be\label{def_sigma_G^cyl}
 \sigma_G^{n,\cyl}(l) =\sigma_G^{\cyl}(l) \coloneqq  \lim_{L\up\oo} \dfrac{\cE_G(C_{l,L})} {|C_{l,L}|}.
\ee
Then we define the optimal energy/mass ratio of infinite cylinders as 
\be\label{def_rho_G^cyl}
\rho_G^{\cyl}=\rho_G^{n,\cyl} \coloneqq  \inf_{l>0}\sigma_G^{n,\cyl}(l).
\ee
\end{dfn}

\begin{prp}\label{prp_genminball}
If $E=(B_{r_i})_{i\in I}$ is a generalized set of mass $m$ made of a collection of balls, then
\[
\frac{\cE_G(E)}{m} \ge \rho_G^{\ball}.
\]
\end{prp}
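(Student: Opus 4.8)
The plan is to reduce the stated inequality to the defining property of $\rho_G^{\ball}$ applied to each ball separately, and then to recombine the resulting bounds using the additivity of the energy and of the mass over the components of a generalized set. There is essentially no hidden difficulty here: the statement is an immediate consequence of $\rho_G^{\ball}$ being an infimum of energy/mass ratios over balls.

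First I would invoke \cref{dfn_rhoball_rhocyl}: since $\rho_G^{\ball}$ is by definition the infimum of $\cE_G(B_R)/|B_R|$ over all balls $B_R\sub\R^n$, every ball satisfies the pointwise lower bound $\cE_G(B_r)\ge \rho_G^{\ball}\,|B_r|$. Applying this to each member $B_{r_i}$ of the family $(B_{r_i})_{i\in I}$ gives $\cE_G(B_{r_i})\ge \rho_G^{\ball}\,|B_{r_i}|$ for every $i\in I$.

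Next I would sum these inequalities over $i\in I$. Recalling that the energy of a generalized set is defined additively, $\cE_G(E)=\sum_i\cE_G(B_{r_i})$, and that $m=\sum_i|B_{r_i}|$ by hypothesis, I obtain
\[
\cE_G(E)=\sum_{i\in I}\cE_G(B_{r_i})\ge\rho_G^{\ball}\sum_{i\in I}|B_{r_i}|=\rho_G^{\ball}\,m,
\]
and dividing by $m>0$ yields $\cE_G(E)/m\ge\rho_G^{\ball}$, as claimed.

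The only point deserving a word of care—rather than a genuine obstacle—is that the index set $I$ may be countably infinite, so the expressions above are series. Because $G\ge0$, all the terms $\cE_G(B_{r_i})$ and $|B_{r_i}|$ are non-negative, hence both series are well-defined in $[0,+\oo]$ and the termwise inequality passes to the sums with no convergence concern; components with $r_i=0$ contribute zero to each side and may be discarded. This completes the argument.
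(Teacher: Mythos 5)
Your proof is correct and follows essentially the same route as the paper: both bound each $\cE_G(B_{r_i})$ below by $\rho_G^{\ball}|B_{r_i}|$ using the definition of $\rho_G^{\ball}$ as an infimum, then sum over $i\in I$ using the additivity of the generalized energy and of the mass. Your extra remark on countable index sets and nonnegativity is a harmless refinement of the same argument.
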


\begin{proof}
The proof is elementary. By definition of $\rho_G^{\ball}$ and of the energy of generalized
minimizers, we have
\[
\cE_G(E)
= \sum_{i\in I} \frac{\cE_G(B_{r_i})}{|B_{r_i}|}|B_{r_i}|
\ge \rho_G^{\ball}\sum_{i\in I} |B_{r_i}|
=m \rho_G^{\ball}.
\]
\end{proof}

\begin{rmk}
Notice that in fact
\be\label{rmkgenmin:asymp}
\lim_{m\to\oo} \inf_E \,\frac{\cE_G(E)}{m} = \rho_G^{\ball},
\ee
where the infimum is taken over generalized sets $E$ of mass $m$ and made exclusively of balls.
Indeed, let $\eps>0$ and let $B_r$ such that $\cE_G(B_r)/|B_r|\le \rho_G^{\ball}+\eps$. For $m>0$, we consider the generalized set $E$ of mass $m$ made of $N(m) =\lt\lfloor\frac{m}{|B_{r}|}\rt\rfloor$ balls of radius $r$ and one ball of radius $s<r$ so that $|B_s|=m-N(m)|B_r|$. We have 
\[
\dfrac{\cE_G(E)}{m}
= \dfrac{N(m)\cE_G(B_r) + \cE_G(B_s)}{m}  
\le \dfrac{N(m) + 1}{m} \mc \cE_G(B_r) 
=   \dfrac{N(m) + 1}{m/|B_r|}(\rho_G^{\ball}+\eps).
\]
Sending $m$ to $\oo$ and then $\eps$ to $0$ we obtain
\[
\lim_{m\to\oo} \inf_E \,\frac{\cE_G(E)}{m} \le \rho_G^{\ball}.
\] 
As the converse inequality follows from \cref{prp_genminball}, we conclude  that~\cref{rmkgenmin:asymp} holds true.
\end{rmk}

Let us now consider the energy of cylinders. 
\begin{lem}\label{lem_cylinf}
There holds for $l>0$,
\be\label{def_sigma_G^cyl_2}
 \sigma_G^{\cyl}(l) = \dfrac{n-1}{l} + \dfrac1{|B_l^{n-1}|} \int_{B_l^{n-1}\times B_l^{n-1}}\int_{\R}  G(x'-y',t) \,\,dt\,dx'\,dy'.
\ee
\end{lem}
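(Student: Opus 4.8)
The plan is to split $\cE_G(C_{l,L})=P(C_{l,L})+\I_G(C_{l,L})$ and handle the two contributions to the ratio separately, sending $L\up\oo$ only at the very end. Throughout I will exploit $G\ge0$ so that every integral below is a well-defined element of $[0,\oo]$ and all interchanges are justified by Tonelli's theorem.

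For the perimeter term I would compute $P(C_{l,L})$ explicitly: the boundary of the cylinder $B_l^{n-1}\times(0,L)$ consists of the two flat caps, each of $\h^{n-1}$-measure $|B_l^{n-1}|$, together with the lateral surface of measure $L\,P(B_l^{n-1})$, so that $P(C_{l,L})=2|B_l^{n-1}|+L\,P(B_l^{n-1})$. Since $|C_{l,L}|=L\,|B_l^{n-1}|$, this gives
\[
\frac{P(C_{l,L})}{|C_{l,L}|}=\frac2L+\frac{P(B_l^{n-1})}{|B_l^{n-1}|}\longto \frac{P(B_l^{n-1})}{|B_l^{n-1}|}=\frac{n-1}{l},
\]
where the last equality uses the scaling $P(B_l^{n-1})=(n-1)|B_1^{n-1}|l^{n-2}$ and $|B_l^{n-1}|=|B_1^{n-1}|l^{n-1}$.

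For the interaction term I would split coordinates $x=(x',s)$, $y=(y',t)$ with $x',y'\in B_l^{n-1}$ and $s,t\in(0,L)$, so that $x-y=(x'-y',s-t)$ and, by Tonelli,
\[
\I_G(C_{l,L})=\int_{B_l^{n-1}}\int_{B_l^{n-1}}\int_0^L\int_0^L G(x'-y',s-t)\,ds\,dt\,dx'\,dy'.
\]
The one elementary computation needed is the autocorrelation identity $\int_0^L\int_0^L h(s-t)\,ds\,dt=\int_{-L}^{L}(L-|u|)h(u)\,du$ for nonnegative measurable $h$, obtained from the change of variable $u=s-t$ (for fixed $u$ the admissible set $\{t\in(0,L):t+u\in(0,L)\}$ has length $L-|u|$). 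Applying it with $h(u)=G(x'-y',u)$ and dividing by $|C_{l,L}|=L\,|B_l^{n-1}|$ yields
\[
\frac{\I_G(C_{l,L})}{|C_{l,L}|}=\frac1{|B_l^{n-1}|}\int_{B_l^{n-1}}\int_{B_l^{n-1}}\int_{-L}^{L}\lt(1-\frac{|u|}{L}\rt)G(x'-y',u)\,du\,dx'\,dy'.
\]

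Finally I would pass to the limit $L\up\oo$. The only subtlety — and the reason I prefer monotone convergence over dominated convergence — is that the limiting integral may be infinite, and I want the formula to hold in that case too, without imposing any integrability hypothesis on $G$. For fixed $x',y'$ the weight $\ind_{|u|<L}\lt(1-\tfrac{|u|}{L}\rt)G(x'-y',u)$ is nonnegative and nondecreasing in $L$ (both the truncation window $(-L,L)$ widens and the factor $1-|u|/L$ increases pointwise), so by monotone convergence the inner integral increases to $\int_{\R}G(x'-y',u)\,du$; a second application of monotone convergence in $(x',y')$ over the bounded set $B_l^{n-1}\times B_l^{n-1}$ gives
\[
\lim_{L\up\oo}\frac{\I_G(C_{l,L})}{|C_{l,L}|}=\frac1{|B_l^{n-1}|}\int_{B_l^{n-1}\times B_l^{n-1}}\int_{\R}G(x'-y',t)\,dt\,dx'\,dy'.
\]
Adding the perimeter contribution $\frac{n-1}{l}$ computed above yields the stated formula for $\sigma_G^{\cyl}(l)$. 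No genuinely hard step is involved; the argument is essentially a coordinate split plus the triangle-weight identity, and the main point to get right is the clean treatment of the $L\to\oo$ limit via monotonicity so that it is valid regardless of finiteness.
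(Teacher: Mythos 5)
Your proof is correct and follows essentially the same route as the paper: identical computation of the perimeter contribution, then Tonelli plus a change of variables in the axial coordinates, and finally monotone convergence (using $G\ge 0$) to pass to the limit $L\up\oo$. The only cosmetic difference is that you integrate out the position variable to obtain the explicit triangle weight $\bigl(1-\tfrac{|u|}{L}\bigr)_+$, whereas the paper keeps a rescaled position $\xi=s/L$ and lets the window $(-\xi L,(1-\xi)L)$ grow to $\R$; both monotonicity arguments are equally valid.
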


\begin{proof}
Recall the definition~\cref{def_sigma_G^cyl} of $ \sigma_G^{\cyl}(l)$. First, we have $P(C_{l,L}) = L l^{n-2} |\Sp^{n-2}| + 2 |B_l^{n-1}|$. Dividing by $|C_{l,L}|$ and using the identities  $|C_{l,L}|=|B_l^{n-1}|\,L$ and $|B^{n-1}|=|\Sp^{n-2}|/(n-1)$, we get 
\be\label{proof_lem_cylinf_1}
\dfrac{P(C_{l,L})}{|C_{l,L}|} = \lt(\dfrac{n-1}{l}+ \frac{2}{L}\rt)\ \longto\ \dfrac{n-1}{l}\quad\text{as }L\up\oo.
\ee
Then we compute
\begin{align*}
\dfrac{\cI_G(C_{l,L})}{|C_{l,L}|}  
&=\dfrac1{|B_l|^{n-1} L} \int_{(0, L)^2}
\int_{[B_l^{n-1}]^2} G(x'-y',r-s) \,dx'\,dy'\,dr\, ds\\
&=\dfrac1{|B_l|^{n-1}} \dfrac1L\int_0^L \lt[
\int_{[B_l^{n-1}]^2} \lt(\int_0^L G(x'-y',r-s)\,dr\rt) \,dx'\,dy' \rt] \,ds.
\end{align*}
We perform the change of variable $r=s+t$ in the inner integral and $s=\xi L$ in the outer integral to get 
\begin{align*}
\dfrac{\cI_G(C_{l,L})}{|C_{l,L}|}  
&=\dfrac1{|B_l^{n-1}|} \int_0^1 \lt[
\int_{[B_l^{n-1}]^2} \lt(\int_{-\xi L}^{(1-\xi)L} G(x'-y',t)\,dt\rt) \,dx'\,dy' \rt] \,d\xi.
\end{align*}
Since $G\ge0$, we get by the monotone convergence theorem,
\[
\lim_{L\up\oo} \dfrac{\cI_G(C_{l,L})}{|C_{l,L}|}= \dfrac1{|B_l|^{n-1}}\int_{[B_l^{n-1}]^2} \lt(\int_\R G(x'-y',r)\,dt\rt) \,dx'\,dy'.
\]
Together with~\cref{proof_lem_cylinf_1}, this proves~\cref{def_sigma_G^cyl_2}.
\end{proof}

\begin{rmk}
Note that the quantity $\sigma_G^{\cyl}(l)$ is infinite for $G=R_\alpha$ with $\alpha\le 1$. Indeed, 
\[
\int_{\R}  \dfrac1{|(x'-y',t)|^\alpha} \,dt = +\oo.
\]
In particular, $\sigma_G^{\cyl}(l)=+\oo$ for the Coulomb potential ($G=R_1$) in $\R^3$.
\end{rmk}

\begin{prp}\label{prp_ecyleball_nonsphere}
Assume that $G$ is such that Problem~\cref{minpb} admits a generalized minimizer for every mass. We have that if
\[
\rho_G^{\cyl}<\rho_G^{\ball}
\]
then there exists a mass $m>0$ for which Problem~\cref{minpb} admits minimizers, none of which is a ball, nor a disjoint union of balls.
\end{prp}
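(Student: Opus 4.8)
The plan is to argue by contradiction, exactly mirroring the structure laid out in the introduction. Suppose the conclusion fails: then for \emph{every} mass $m>0$, either Problem~\cref{minpb} admits no minimizer at all, or every minimizer (and hence, in the compactly supported case, every component of a generalized minimizer) is a ball or a disjoint union of balls. My aim is to show that this forces $\rho_G^{\ball}\le\rho_G^{\cyl}$, contradicting the hypothesis $\rho_G^{\cyl}<\rho_G^{\ball}$.

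First I would invoke the standing assumption that Problem~\cref{minpb} admits a generalized minimizer for every mass (\cref{prp_existmingen}, together with \cref{dfn_compactcase} in the compactly supported case). Fix an arbitrary mass $m>0$ and let $E=(E^i)_{i\in I}$ be such a generalized minimizer, so that $\hat e_G(m)=\cE_G(E)=\sum_i\cE_G(E^i)$ with $\sum_i|E^i|=m$. By \cref{prp_existmingen}, the index set $I$ is finite and each $E^i$ is a \emph{classical} minimizer of \cref{minpb} at mass $|E^i|>0$. Under the contradiction hypothesis applied to the mass $|E^i|$ (for which a minimizer does exist, namely $E^i$ itself), each component $E^i$ must be a ball. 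Thus the generalized minimizer consists entirely of balls, say $E=(B_{r_i})_{i\in I}$.

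Next I would apply \cref{prp_genminball} to this all-balls generalized set: it gives
\[
\frac{\hat e_G(m)}{m}=\frac{\cE_G(E)}{m}\ge\rho_G^{\ball}.
\]
On the other hand, for any $l,L>0$, using the single-component generalized set $(C_{l,L},\void,\void,\dots)$ as a competitor in \cref{genminpb} yields $\hat e_G(|C_{l,L}|)\le\cE_G(C_{l,L})$, whence $\hat e_G(|C_{l,L}|)/|C_{l,L}|\le\cE_G(C_{l,L})/|C_{l,L}|$. Taking the limit $L\up\oo$ and the infimum over $l>0$, and recalling \cref{def_sigma_G^cyl,def_rho_G^cyl}, gives
\[
\inf_{m>0}\frac{\hat e_G(m)}{m}\le\rho_G^{\cyl}.
\]
Since the lower bound $\hat e_G(m)/m\ge\rho_G^{\ball}$ holds for \emph{every} $m>0$ under the contradiction hypothesis, I combine the two displays to obtain $\rho_G^{\ball}\le\inf_{m>0}\hat e_G(m)/m\le\rho_G^{\cyl}$, contradicting $\rho_G^{\cyl}<\rho_G^{\ball}$.

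Therefore the contradiction hypothesis fails, so there exists a mass $m>0$ at which \cref{minpb} admits a minimizer none of whose components is a ball; since a disjoint union of balls is itself built from balls, no minimizer is such a union either. The step I expect to require the most care is the logical bookkeeping in the second paragraph: one must ensure the contradiction hypothesis is invoked at the correct mass $|E^i|$ (where existence of a classical minimizer is guaranteed by $E^i$ itself) so that the ``no minimizer'' alternative is excluded and the ``balls only'' alternative forces each component to be a ball. Everything else is an assembly of the already-established \cref{prp_genminball,prp_existmingen} and the elementary cylinder comparison.
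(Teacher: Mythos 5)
Your proof follows the paper's strategy --- contradiction, reduction to an all-balls generalized minimizer, the lower bound from \cref{prp_genminball}, and the cylinder competitor --- and the two energy estimates in your third and fourth paragraphs are correct and essentially identical to the paper's. The genuine gap is in the step you yourself flagged: the formation of the contradiction hypothesis. The negation of the conclusion is: for every $m>0$, either no minimizer exists, or \emph{some} minimizer of mass $m$ is a ball or a disjoint union of balls. You instead assume that \emph{every} minimizer is of this form. This misquantification breaks the argument at both ends. Going in, the correct negation does not let you conclude that the component $E^i$ itself is a ball: it only provides some minimizer $F^i$ with $|F^i|=|E^i|$ which is a ball or a disjoint union of balls. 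You must then \emph{swap} $E^i$ for $F^i$ --- legitimate since $\cE_G(F^i)=\cE_G(E^i)$, both being minimizers at the same mass --- and, when $F^i$ is a disjoint union of balls, \emph{split} it into its constituent balls, which does not increase the energy because the perimeter is additive over pairwise disjoint balls while the interaction is superadditive, i.e.\ $\I_G(A\cup B)\ge \I_G(A)+\I_G(B)$ for disjoint $A,B$. Only after this swap-and-split does \cref{prp_genminball} apply. (Note that even under your own hypothesis the union-of-balls alternative is not excluded, so the assertion that ``each component $E^i$ must be a ball'' is unjustified as written.) Going out, refuting your too-strong hypothesis yields only: there exists $m$ at which \emph{some} minimizer is neither a ball nor a disjoint union of balls. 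The proposition asserts that \emph{no} minimizer at that mass is of this form; your closing sentence upgrades an existential to a universal, which does not follow --- a priori a given mass could admit both a ball minimizer and a non-ball minimizer.

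The paper avoids both problems by taking as its contradiction hypothesis ``for every mass there is a generalized minimizer made only of balls.'' This is exactly what the correct negation delivers after the swap-and-split argument above; hence the negation of the conclusion implies the paper's hypothesis, and, contrapositively, refuting that hypothesis gives the conclusion in full strength. Your proof becomes correct with this one repair: state the correct negation, insert the swap-and-split step to produce, for every mass, a generalized minimizer consisting only of balls, and then run your two energy bounds verbatim.
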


\begin{proof}
Recall that in \cref{dfn_compactcase}, when $G$ is compactly supported,  a generalized minimizer
is a collection of connected sets in the measure theoretic sense.

Let us argue by contradiction, and assume that for every mass $m$, there is a generalized minimizer
made only of balls.
We deduce from \cref{prp_genminball}  that for every  $m>0$ we have
\be\label{proof_prp_ecyleball_nonsphere_1}
\dfrac{e_G(m)}m \ge  \rho_G^{\ball}.
\ee
Next, by definition of $\rho_G^{\cyl}$ and $\sigma_G^{\cyl}$ (see \cref{def_sigma_G^cyl,def_rho_G^cyl}) and since by assumption $\rho_G^{\cyl}<\rho_G^{\ball}$ there exists $l>0$ such that
\[
\sigma_G^{\cyl}(l) < \rho_G^{\ball}.
\]
By definition~\cref{def_sigma_G^cyl} of $\sigma_G^{\cyl}(l)$, we have for some $L=L(m)$ large enough
\[
\dfrac{e_G(|C_{l,L}|)}{|C_{l,L}|} \le \dfrac{\cE_G(C_{l,L})}{|C_{l,L}|}<  \rho_G^{\ball} .
\]
This contradicts~\cref{proof_prp_ecyleball_nonsphere_1} and proves the proposition.
\end{proof}

\subsection{A slicing formula}
\label{Ss_slicing}
We  present useful  formulas which express the 
interaction energy $\cI_G(E)$ through the interaction energies of the one-dimensional slices of $E$.
Similar formulas appear in the literature (see~\cite{MP2021,CR2025} or in~\cite{Lud2014} for
anisotropic perimeters). As they depend on $n$ we put back the superscripts $n$.

\begin{dfn}\label{dfn_Esigmay} 
Let $E\sub\R^n$, for $y\in\R^n$ and $\sigma\in \Sp^{n-1}$ we set:
\[
E_{\sigma,y}= \big\{ s\in\R ~:~ y+s\sigma\in E\}.
\] 
\end{dfn}

\begin{prp}[Interaction energy using 1D-slices]\label{prp_1Dslices}
Writing $G(|x|)\coloneqq G(x)$, we have for every measurable set $E\sub\R^n$, 
\be\label{prp_1Dslices_id1}
\cI_G^n(E) = \dfrac1{2} \int_{\Sp^{n-1}}\int_{\sigma^\perp} \int_{E_{\sigma,y'}\times
E_{\sigma,y'}} |s-t|^{n-1}G(s-t)\, ds\, dt\, d\h^{n-1}(y')\, d\h^{n-1}(\sigma).
\ee
Moreover, when $E$ is a bounded convex set, there holds
\be\label{eq_defFG}
\cI_G^n(E) = \dfrac1{2} \int_{\Sp^{n-1}}\int_{\sigma^\perp} 
S_G^n(\h^1(E_{\sigma,y'}))\,d\h^{n-1}(y')\, d\h^{n-1}(\sigma),
\ee
where we have set 
\be\label{eq_defSG}
S_G^n(L) \coloneqq \int_0^L \int_0^L |s-t|^{n-1}G(s-t)\,ds\, dt.
\ee
\end{prp}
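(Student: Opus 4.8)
The plan is to prove the two identities by a direct computation that funnels the double integral defining $\cI_G^n(E)$ through polar-type coordinates adapted to lines. The central idea is that any pair $(x,y)\in E\times E$ with $x\neq y$ determines a unique line direction $\sigma=\pm(x-y)/|x-y|\in\Sp^{n-1}$, and once we fix that direction every such pair lies on a line $y'+\R\sigma$ for a unique $y'\in\sigma^\perp$. So I would start from
\[
\cI_G^n(E)=\int_{E\times E}G(|x-y|)\,dx\,dy
\]
and perform the change of variables $(x,y)\mapsto(\sigma,y',s,t)$, where $x=y'+s\sigma$ and $y=y'+t\sigma$ with $y'\in\sigma^\perp$ and $s,t\in\R$. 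The crux is the Jacobian of this parametrization.

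First I would set up the coarea/disintegration carefully. For a fixed direction $\sigma$, Fubini in the splitting $\R^n=\sigma^\perp\oplus\R\sigma$ gives
\[
\int_{E\times E}G(|x-y|)\,dx\,dy
=\int_{\sigma^\perp}\int_{\sigma^\perp}\int_\R\int_\R
\ind_E(y'+s\sigma)\,\ind_E(z'+t\sigma)\,G(|(y'-z')+(s-t)\sigma|)\,\cdots,
\]
but this keeps two transverse variables $y',z'$, which is not what we want. The right move is instead to integrate \emph{over all directions} and use that the map $(x,y)\mapsto x-y$ recovers $\sigma$. Concretely, I would write $x-y=r\sigma$ with $r=|x-y|>0$, $\sigma\in\Sp^{n-1}$, and note the elementary fact that for a fixed unordered pair of points on a common line, switching to the slice description $E_{\sigma,y'}$ introduces the factor $|s-t|^{n-1}$: the transverse $(n-1)$-dimensional ``thickness'' of the tube of directions carrying the displacement vector of length $|s-t|$ scales like $|s-t|^{n-1}$. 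Making this precise is the heart of the argument.

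The cleanest way I would make that precise is to avoid direction-averaging singularities by computing the inner double integral \emph{first for a fixed line}. Fix $\sigma$ and $y'\in\sigma^\perp$; the slice $E_{\sigma,y'}$ is a subset of $\R$, and $\int_{E_{\sigma,y'}\times E_{\sigma,y'}}|s-t|^{n-1}G(|s-t|)\,ds\,dt$ is the one-dimensional self-interaction weighted by $|s-t|^{n-1}$. I would then verify that integrating this over $y'\in\sigma^\perp$ and $\sigma\in\Sp^{n-1}$, with the factor $\tfrac12$, reconstructs $\cI_G^n(E)$. The verification reduces to the case $E=\R^n$ localized near a displacement: equivalently, it suffices to check the weight by confirming that for every fixed $w\in\R^n\setminus\{0\}$,
\[
\int_{\Sp^{n-1}}\ind_{\{w\parallel\sigma\}}\,|w|^{n-1}\,d\h^{n-1}(\sigma)
\]
is reproduced by the pushforward of Lebesgue measure under $w=(s-t)\sigma$. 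The factor $|s-t|^{n-1}$ is then exactly the Jacobian of the polar change of variables $w=r\sigma$, namely $dw=r^{n-1}\,dr\,d\h^{n-1}(\sigma)$ with $r=|s-t|$, and the factor $\tfrac12$ accounts for the two-to-one correspondence $r\sigma\leftrightarrow(-r)(-\sigma)$ identifying $(s,t)$ with $(t,s)$. Assembling these gives~\cref{prp_1Dslices_id1}.

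For the second identity~\cref{eq_defFG}, I would use convexity: when $E$ is bounded and convex, each slice $E_{\sigma,y'}$ is a (possibly empty) bounded interval, so $\h^1(E_{\sigma,y'})=L$ determines the slice up to translation in $s$, and the integrand $\int_{E_{\sigma,y'}\times E_{\sigma,y'}}|s-t|^{n-1}G(|s-t|)\,ds\,dt$ is translation-invariant. Hence it equals $\int_0^L\int_0^L|s-t|^{n-1}G(|s-t|)\,ds\,dt=S_G^n(L)$, and~\cref{eq_defFG} follows immediately from~\cref{prp_1Dslices_id1} by substitution. The main obstacle is unquestionably the first step: justifying the $|s-t|^{n-1}$ weight and the constant $\tfrac12$ rigorously, i.e.\ pinning down the Jacobian of the line-parametrization of $\R^n\times\R^n$. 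I would handle measurability and the validity of Fubini/Tonelli throughout by using $G\ge0$, so that all rearrangements of the integrals are justified by the monotone convergence and Tonelli theorems without integrability concerns.
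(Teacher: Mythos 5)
Your proposal is correct and follows essentially the same route as the paper's proof: polar coordinates $x-y=r\sigma$ supply the Jacobian $r^{n-1}=|s-t|^{n-1}$, the factor $\tfrac12$ comes from the two-to-one parametrization $(r,\sigma)\leftrightarrow(-r,-\sigma)$, Fubini in the splitting $\R^n=\sigma^{\perp}\oplus\R\sigma$ produces the slice variables $(y',s,t)$, translation invariance of the slice integral handles the convex case, and nonnegativity of $G$ justifies all interchanges via Tonelli. One small gloss to fix when writing it up: the double counting identifies the triple $(\sigma,s,t)$ with $(-\sigma,-s,-t)$, not $(s,t)$ with $(t,s)$ for a fixed $\sigma$ (the latter symmetry is already present on both sides), but this does not affect the argument.
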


\begin{proof}
The first identity follows from simple changes of variables and Fubini. One can
reproduce almost verbatim the proof of \cite[Proposition~3.1]{MP2021} (albeit replacing occurrences of $E\times(\R^n\setminus E)$ by $E\times E$). Here are the details.  We perform the change of variable $y=x+r\sigma$ with $\sigma\in\Sp^{n-1}$ and $r>0$ and use Fubini to get 
\begin{align*}
\cI_G^n(E)&=\int_{E\times E} G(y-x)\,dy\,dx\\
&=\int_{\Sp^{n-1}} \int_{(0,+\oo)}\int_{\R^n}\ind_E(x)\ind_E(x+r\sigma) r^{n-1}G(r)\, dx\, dr\,d\h^{n-1}(\sigma).
\end{align*}
Using the change of variable $r=-\tilde r$, $\sigma=-\tilde \sigma$, we also have 
\[
\cI_G^n(E)=\int_{\Sp^{n-1}} \int_{(-\oo,0)}\int_{\R^n}\ind_E(x)\ind_E(x+r\sigma) |r|^{n-1}G(|r|)\, dx\, dr\,d\h^{n-1}(\sigma).
\]
Averaging the last two identities we get 
\begin{align*}
\cI_G^n(E)&=\dfrac12\int_{\Sp^{n-1}} \int_\R \lt[\int_{\R^n}\ind_E(x)\ind_E(x+r\sigma) |r|^{n-1}G(|r|)\, dx\rt]\, dr\,d\h^{n-1}(\sigma).
\end{align*}
Then, we perform the change of variable in the inner integral, $x=y'+s\sigma$ for $y'\in\sigma^\perp$ and $s\in\R$. By Fubini we have 
\begin{multline*}
\cI_G^n(E)\\
=\dfrac12\int_{\Sp^{n-1}} \int_{\sigma^{\perp}} \lt[\int_{\R\times \R}
\ind_E(y'+s\sigma)\ind_E(y'+(r+s)\sigma) |r|^{n-1}G(|r|)\, ds\,dr\rt]\, d\h^{n-1}(y')\, d\h^{n-1}(\sigma).
\end{multline*}
Using the change of variable $r=t-s$, the term into square brackets rewrites as
\begin{align*}
\int_{\R\times \R}
\ind_E(y'+s\sigma)&\ind_E(y'+(r+s)\sigma) |r|^{n-1}G(|r|)\, ds\,dr\\
&=\int_{\R\times \R}\ind_E(y'+s\sigma)\ind_E(y'+t\sigma) |s-t|^{n-1}G(|s-t|)\, ds\,dt\\
&=\int_{E_{\sigma,y'}\times
E_{\sigma,y'}} |s-t|^{n-1}G(|s-t|)\, ds\, dt.
\end{align*}
This yields~\cref{prp_1Dslices_id1}.\medskip 

Since the intersection of a bounded convex set with any line is a (possibly empty) segment,~\cref{eq_defFG} is a direct consequence of~\cref{prp_1Dslices_id1}. 
\end{proof}

As an immediate application, using  the symmetries of the ball, we obtain a simpler expression of the energy of a ball in terms of the function $S_G$.

\begin{cor}\label{cor:rieszball}
For every $R>0$, there holds
\[
\cI_G^n(B_R) = \dfrac1{2}|\Sp^{n-1}||\Sp^{n-2}|\int_0^R S_G^n\big(2\sqrt{R^2-r^2}\big)r^{n-2}\,dr.
\]
\end{cor}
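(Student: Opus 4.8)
The plan is to specialize the slicing formula \cref{eq_defFG} to the bounded convex set $E=B_R$ and then to exploit the rotational symmetry of the ball. First I would compute the one-dimensional slices: for any direction $\sigma\in\Sp^{n-1}$ and any $y'\in\sigma^\perp$, the slice is $(B_R)_{\sigma,y'}=\{s\in\R: |y'+s\sigma|<R\}$, and since $y'\perp\sigma$ we have $|y'+s\sigma|^2=|y'|^2+s^2$. Hence the slice equals $\{s: s^2<R^2-|y'|^2\}$, so that $\h^1((B_R)_{\sigma,y'})=2\sqrt{R^2-|y'|^2}$ when $|y'|<R$, while the slice is empty (of length $0$) when $|y'|\ge R$. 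Because $S_G^n(0)=0$ by \cref{eq_defSG}, the integrand $S_G^n(\h^1((B_R)_{\sigma,y'}))$ vanishes automatically as soon as $|y'|\ge R$.

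Second, since $B_R$ is bounded and convex, \cref{eq_defFG} applies and yields
\[
\cI_G^n(B_R)=\dfrac12\int_{\Sp^{n-1}}\lt[\int_{\sigma^\perp} S_G^n\lt(2\sqrt{R^2-|y'|^2}\rt)\,d\h^{n-1}(y')\rt]\,d\h^{n-1}(\sigma),
\]
with the convention that the integrand vanishes for $|y'|\ge R$. The inner integral is independent of $\sigma$: for each $\sigma$, the hyperplane $\sigma^\perp$ is an $(n-1)$-dimensional subspace through the origin, and the integrand depends on $y'$ only through $|y'|$, so rotational invariance of the ball makes the value the same for every $\sigma$. The outer integration over $\Sp^{n-1}$ therefore contributes exactly the factor $|\Sp^{n-1}|$.

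Third, I would evaluate the inner integral in polar coordinates on $\sigma^\perp\cong\R^{n-1}$. Writing $r=|y'|$ and using the radial integration formula $\int_{\R^{n-1}}f(|y'|)\,d\h^{n-1}(y')=|\Sp^{n-2}|\int_0^\oo f(r)\,r^{n-2}\,dr$ together with the vanishing of the integrand for $r\ge R$, I get
\[
\int_{\sigma^\perp} S_G^n\lt(2\sqrt{R^2-|y'|^2}\rt)\,d\h^{n-1}(y')=|\Sp^{n-2}|\int_0^R S_G^n\lt(2\sqrt{R^2-r^2}\rt)\,r^{n-2}\,dr.
\]
Combining the three factors $\tfrac12$, $|\Sp^{n-1}|$ and $|\Sp^{n-2}|$ gives precisely the claimed identity.

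There is no genuine obstacle here, as the statement is a direct corollary of \cref{prp_1Dslices}. The only points requiring mild care are the chord-length computation, where the orthogonality $y'\perp\sigma$ is exactly what produces $|y'+s\sigma|^2=|y'|^2+s^2$, and the observation that $S_G^n$ truncates the radial integral at $r=R$ on its own, so that the region $|y'|\ge R$ needs no separate treatment.
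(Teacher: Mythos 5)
Your proposal is correct and follows essentially the same route as the paper's proof: specialize the slicing formula \cref{eq_defFG} to $E=B_R$, compute the chord length $2\sqrt{(R^2-|y'|^2)_+}$, drop the $\sigma$-dependence by rotational symmetry to pull out $|\Sp^{n-1}|$, and finish with polar coordinates on $\sigma^\perp$ to produce $|\Sp^{n-2}|\int_0^R S_G^n(2\sqrt{R^2-r^2})\,r^{n-2}\,dr$. Your explicit remark that $S_G^n(0)=0$ justifies truncating the radial integral at $r=R$ is a minor point the paper handles implicitly via the notation $a_+$, but the arguments are the same.
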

\begin{proof}
For $\sigma\in \Sp^{n-1}$, $y'\in\sigma^\perp$, we have for $E=B_R$,
\[
E_{\sigma,y'}=\begin{cases}\lt[- \sqrt{R^2-|y'|^2}, \sqrt{R^2-|y'|^2}\rt]&\text{if }|y'|<R,\\
\qquad\void&\text{if }|y'|\ge R.
\end{cases}
\]
It follows that 
\[
\h^1(E_{\sigma,y'})=2\sqrt{(R^2-|y'|^2)_+},
\]
with the notation $a_+\coloneq \max(a,0)$. We deduce form~\cref{eq_defFG},
\[
\cI_G^n(B_R)=\dfrac12 \int_{\Sp^{n-1}}\int_{\sigma^\perp} 
S_G^n\lt(2\sqrt{(R^2-|y'|^2)_+}\rt)\,d\h^{n-1}(y')\,d\h^{n-1}(\sigma ).
\]
The inner integral does not depend on $\sigma$ and we can write 
\[
\cI_G^n(B_R)=\dfrac{|\Sp^{n-1}|}2 \int_{\R^{n-1}} 
S_G^n\lt(2\sqrt{(R^2-|y'|^2)_+}\rt)\,dy'.
\]
Finally  using polar coordinates we write  $y'=r\tau$ with $r>0$, $\tau\in \Sp^{n-2}$ and we get the desired identity.
\end{proof}

\begin{rmk}\label{cor:rieszcyl}
We can apply the formula of the corollary to the interaction energy in the expression~\cref{def_sigma_G^cyl} of $\sigma_{G}^{n,\cyl}(l)$ in~\cref{lem_cylinf}. More precisely,  setting
\[
G^{\cyl}(x') = \int_{\R} G(x'+te_n)\,dt,\qquad \text{ for }x'\in\R^{n-1},
\]
we have
\[
\sigma_{G}^{n,\cyl}(l)
=\dfrac{n-1}{l}+\dfrac1{|B_l^{n-1}|}\cI_{G^{\cyl}}^{n-1}(B_l^{n-1}).
\]
This is used in~\cref{S_riesz_cylinder,S_truncated_riesz_cylinder}.
\end{rmk}

\section{Riesz potentials}\label{S_riesz}

In this section, we treat the case $G(x)=R_\alpha(x)=|x|^{-\alpha}$ for $\alpha\in(0,n)$ and $n\ge 2$. 
We use the notation
\be\label{def_beta}
\beta=\beta(\alpha,n)\coloneqq n+1-\alpha\ \in (1,n+1).
\ee

\subsection{Energy/mass ratio of balls}

We first compute the function $S_G^n$ defined in 
\cref{eq_defSG} in \cref{prp_1Dslices} in the case of the Riesz potentials.

\begin{lem}
For every $\alpha\in (0,n)$ and every $L>0$, we have
\[
S_{R_\alpha}^n(L) = \frac{2}{\beta(\beta-1)} L^\beta.
\]
Thus for every convex set $E$, we have
\[
\I^n_{R_\alpha}(E) = \dfrac1{\beta(\beta-1)}\int_{\Sp^{n-1}}\int_{\sigma^\perp}
\h^1(E_{\sigma,y})^\beta\,dy\, d\h^{n-1}(\sigma).
\]
\end{lem}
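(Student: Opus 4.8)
The plan is to compute $S_{R_\alpha}^n(L)$ directly from its definition~\cref{eq_defSG} and then substitute into~\cref{eq_defFG}. Recall that with $G=R_\alpha$ we have $G(|s-t|)=|s-t|^{-\alpha}$, so the integrand $|s-t|^{n-1}G(|s-t|)$ appearing in~\cref{eq_defSG} simplifies to $|s-t|^{n-1-\alpha}=|s-t|^{\beta-2}$, using the definition $\beta=n+1-\alpha$ from~\cref{def_beta}. Thus
\[
S_{R_\alpha}^n(L) = \int_0^L\int_0^L |s-t|^{\beta-2}\,ds\,dt.
\]
Note that $\beta-2 = n-1-\alpha > -1$ since $\alpha<n$, so the integrand is locally integrable near the diagonal $s=t$ and the double integral converges.

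The main (and only) computational step is to evaluate this elementary double integral. First I would use the symmetry of the integrand under swapping $s$ and $t$ to write it as $2\int_0^L\int_0^s (s-t)^{\beta-2}\,dt\,ds$. The inner integral, after the substitution $u=s-t$, gives $\int_0^s u^{\beta-2}\,du = s^{\beta-1}/(\beta-1)$, where $\beta-1=n-\alpha>0$ guarantees convergence at $u=0$ and a clean antiderivative. Integrating the resulting $s^{\beta-1}$ over $(0,L)$ yields $L^\beta/\beta$, and carrying along the factor of $2$ and the $1/(\beta-1)$ produces
\[
S_{R_\alpha}^n(L) = \frac{2}{\beta(\beta-1)}L^\beta,
\]
which is exactly the claimed formula.

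For the second identity, I would simply insert this expression into the convex-set formula~\cref{eq_defFG} from \cref{prp_1Dslices}, which applies since $E$ is convex. Substituting $S_{R_\alpha}^n(\h^1(E_{\sigma,y}))=\frac{2}{\beta(\beta-1)}\h^1(E_{\sigma,y})^\beta$ and cancelling the prefactor $\tfrac12$ against the $2$ in the numerator leaves
\[
\I^n_{R_\alpha}(E) = \frac{1}{\beta(\beta-1)}\int_{\Sp^{n-1}}\int_{\sigma^\perp}\h^1(E_{\sigma,y})^\beta\,dy\,d\h^{n-1}(\sigma),
\]
as desired. There is no genuine obstacle here; the only point requiring a moment's care is verifying the exponent arithmetic and the convergence conditions $\beta-1>0$ and $\beta-2>-1$, both of which follow immediately from $0<\alpha<n$. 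This lemma is purely a preparatory computation, so the write-up should be short.
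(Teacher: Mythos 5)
Your proposal is correct and follows essentially the same route as the paper: both evaluate the elementary double integral $\int_0^L\int_0^L|s-t|^{n-1-\alpha}\,ds\,dt$ via a change of variables (the paper splits the inner integral over $r=s-t$ into positive and negative parts, while you symmetrize first; these are trivially equivalent), and then substitute the result into the convex-set slicing formula of \cref{prp_1Dslices}. The convergence remarks you include ($\beta-1>0$, $\beta-2>-1$) are implicit in the paper's computation and are fine to state.
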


\begin{proof}
We compute:
\begin{align*}
\int_0^L \int_0^L |s-t|^{n-1-\alpha}\,ds\, dt
&= \int_0^L \lt( \int_{-t}^{L-t}  |r|^{n-1-\alpha}\,dr\rt)\,dt\\ 
&= \int_0^L \lt[\int_0^t r^{n-1-\alpha}\, dr+ \int_0^{L-t} r^{n-1-\alpha}\,dr\rt]\, dt\\
&= \dfrac1{n-\alpha} \int_0^L\lt[ t^{n-\alpha}+ (L-t)^{n-\alpha}\rt]\,dt\\
&=\dfrac{2L^{n+1-\alpha}}{(n-\alpha)(n+1-\alpha)} .
\end{align*}
This proves the formula for $S_{R_\alpha}^n(L)$. The formula for $\I_{R_\alpha}(E)$ is then an application of \cref{prp_1Dslices}.
\end{proof}

\begin{rmk}
Notice that the quantity
\[
\int_{\Sp^{n-1}}\int_{\sigma^\perp}
\h^1(E_{\sigma,y})^\beta\,dy\, d\h^{n-1}(\sigma)
\]
makes sense even for $n\le \alpha<n+3$ (that is, $1\ge \beta>-2$).  For $\beta=1$ we have  by Fubini, 
\[
\dfrac1{|\Sp^{n-1}|} \int_{\Sp^{n-1}}\int_{\sigma^\perp}
\h^1(E_{\sigma,y})\,dy\, d\h^{n-1}(\sigma) = |E|.
\]
For $\beta=0$, by Crofton's formula (see \cite[Theorem~3.2.26]{Fed1996} or
\cite[§3.16]{Mor2016}),
\[
\dfrac1{|B^{n-1}|} \int_{\Sp^{n-1}}\int_{\sigma^\perp}
\h^1(E_{\sigma,y})^0\,dy\, d\h^{n-1}(\sigma) = P(E),
\]
(here we adopt the convention $\h^1(E_{\sigma,y})^0 = 0$ if $E_{\sigma,y}=\void$).
\end{rmk}

We deduce an expression for the Riesz energy of balls.

\begin{lem}\label{lem_Riesz_nrj_of_balls}
For every $\alpha\in(0,n)$ and every $R>0$, we have
\[
\I^n_{R_\alpha}(B_R) =
\frac{2^{\beta}\pi^{n-\tfrac1{2}}}{\beta-1}
\frac{\Gamma\lt(\frac{\beta}{2}\rt)}{
\Gamma\lt(\frac{n}{2}\rt)\Gamma\lt(\frac{n+\beta+1}{2}\rt)}  R^{n+\beta-1}.
\]
\end{lem}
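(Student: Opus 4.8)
The plan is to feed the value of $S_{R_\alpha}^n$ just computed into the ball formula of \cref{cor:rieszball} and then recognize the surviving radial integral as a Beta function. Starting from
\[
\cI_{R_\alpha}^n(B_R) = \dfrac1{2}|\Sp^{n-1}||\Sp^{n-2}|\int_0^R S_{R_\alpha}^n\big(2\sqrt{R^2-r^2}\big)r^{n-2}\,dr,
\]
I would substitute $S_{R_\alpha}^n(L)=\tfrac{2}{\beta(\beta-1)}L^\beta$ at $L=2\sqrt{R^2-r^2}$, so that $S_{R_\alpha}^n(2\sqrt{R^2-r^2})=\tfrac{2^{\beta+1}}{\beta(\beta-1)}(R^2-r^2)^{\beta/2}$, giving
\[
\cI_{R_\alpha}^n(B_R) = \frac{2^{\beta}}{\beta(\beta-1)}|\Sp^{n-1}||\Sp^{n-2}|\int_0^R (R^2-r^2)^{\beta/2}r^{n-2}\,dr.
\]

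The core computation is then the one-dimensional integral. I would rescale with $r=Ru$ to pull out the homogeneity factor $R^{n+\beta-1}$, and then set $v=u^2$ to turn $\int_0^1 (1-u^2)^{\beta/2}u^{n-2}\,du$ into $\tfrac12\int_0^1 (1-v)^{\beta/2}v^{(n-3)/2}\,dv$, which is the Euler integral
\[
\int_0^R (R^2-r^2)^{\beta/2}r^{n-2}\,dr = R^{n+\beta-1}\,\dfrac12\,\dfrac{\Gamma\lt(\tfrac{n-1}{2}\rt)\Gamma\lt(\tfrac{\beta}{2}+1\rt)}{\Gamma\lt(\tfrac{n+\beta+1}{2}\rt)}.
\]
Here I use the standard identity $B(x,y)=\Gamma(x)\Gamma(y)/\Gamma(x+y)$ with $x=\tfrac{n-1}{2}$ and $y=\tfrac{\beta}{2}+1$; note the integral converges since $\beta>1>-2$ and $n\ge2$, matching the remark following the previous lemma.

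Finally I would assemble the constants. Using $\Gamma\lt(\tfrac{\beta}{2}+1\rt)=\tfrac{\beta}{2}\Gamma\lt(\tfrac{\beta}{2}\rt)$ collapses the prefactor $\tfrac{2^{\beta}}{\beta(\beta-1)}\cdot\tfrac12\cdot\tfrac{\beta}{2}$ to $\tfrac{2^{\beta-2}}{\beta-1}$, and inserting the sphere measures $|\Sp^{n-1}|=\tfrac{2\pi^{n/2}}{\Gamma(n/2)}$ and $|\Sp^{n-2}|=\tfrac{2\pi^{(n-1)/2}}{\Gamma((n-1)/2)}$ produces the crucial cancellation $|\Sp^{n-1}||\Sp^{n-2}|\,\Gamma\lt(\tfrac{n-1}{2}\rt)=\tfrac{4\pi^{n-1/2}}{\Gamma(n/2)}$. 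Combining the powers of two as $2^{\beta-2}\cdot4=2^\beta$ and the powers of $\pi$ as $\pi^{n/2}\pi^{(n-1)/2}=\pi^{n-1/2}$ yields exactly the claimed expression. I expect no genuine obstacle here: the only real risk is a slip in the Gamma/power-of-two bookkeeping, so the one step worth double-checking is the cancellation of $\Gamma\lt(\tfrac{n-1}{2}\rt)$ between the Beta function and the measure of $\Sp^{n-2}$, which is what makes the final formula depend only on $\Gamma(n/2)$, $\Gamma(\beta/2)$ and $\Gamma\lt(\tfrac{n+\beta+1}{2}\rt)$.
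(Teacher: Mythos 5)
Your proposal is correct and follows essentially the same route as the paper's proof: apply \cref{cor:rieszball} with $S_{R_\alpha}^n(L)=\tfrac{2}{\beta(\beta-1)}L^\beta$, reduce the radial integral to the Euler Beta function via the substitution $v=u^2$, and finish with the identity $\Gamma(1+z)=z\Gamma(z)$ and the Gamma expressions for the sphere measures. The only differences are cosmetic (you rescale $r=Ru$ inside the integral rather than scaling to the unit ball first, and you write $|\Sp^{n-1}|,|\Sp^{n-2}|$ directly in terms of Gamma functions instead of passing through $n|B_1^n|$ and $(n-1)|B_1^{n-1}|$), and all your constant bookkeeping checks out.
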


\begin{proof}
First of all notice that by a scaling we have
\[
\I^n_{R_\alpha}(B_R) = R^{2n-\alpha} \I^n_{R_\alpha}(B_1)
= R^{n+\beta-1} \I^n_{R_\alpha}(B_1).
\]
Next by \cref{cor:rieszball} we have
\be\label{proof_lem_Riesz_nrj_of_balls_1}
\I^n_{R_\alpha}(B_1)
= \dfrac1{2}|\Sp^{n-1}||\Sp^{n-2}|
\int_0^1 \frac{2(2\sqrt{R^2-r^2})^\beta}{\beta(\beta-1)} r^{n-2}\,dr.
\ee
Now, recall that the Beta-function (or Euler integral of the first kind) is defined as
\[
B(x,y) = \int_0^1 t^{x-1}(1-t)^{y-1}\,dt,\qquad\forall x,y\in\mathbb{C}\text{ s.t. }
\mathrm{Re}(x),~\mathrm{Re}(y)>0,
\]
and satisfies
\[
B(x,y) = \frac{\Gamma(x)\Gamma(y)}{\Gamma(x+y)}.
\]
Thus, making the change of variable $t=r^2$, we find
\[
\int_0^1 (1-r^2)^{\frac{\beta}{2}} r^{n-2}\,dr
=\dfrac1{2} B\lt(1+\frac{\beta}{2},\frac{n-1}{2}\rt)
= \dfrac1{2}
\frac{\Gamma\lt(1+\frac{\beta}{2}\rt)\Gamma\lt(\frac{n-1}{2}\rt)}{
\Gamma\lt(\frac{n+\beta+1}{2}\rt)}.
\]
Putting this in~\cref{proof_lem_Riesz_nrj_of_balls_1}, we compute
\[
\begin{aligned}
\I^n_{R_\alpha}(B_1)
&= \frac{2^{\beta-1}n(n-1)|B_1^n||B_1^{n-1}|}{\beta(\beta-1)}
\frac{\Gamma\lt(1+\frac{\beta}{2}\rt)\Gamma\lt(\frac{n-1}{2}\rt)}{
\Gamma\lt(\frac{n+\beta+1}{2}\rt)}\\
&=\frac{2^{\beta-1}}{\beta(\beta-1)}
\, \frac{\pi^{n-\tfrac1{2}}n(n-1)}{\Gamma\lt(1+\frac{n}{2}\rt)\Gamma\lt(1+\frac{n-1}{2}\rt)}
\,  \frac{\Gamma\lt(1+\frac{\beta}{2}\rt)\Gamma\lt(\frac{n-1}{2}\rt)}{
\Gamma\lt(\frac{n+\beta+1}{2}\rt)}\\
&=\frac{2^{\beta}\pi^{n-\tfrac1{2}}}{\beta-1}\, \frac{\Gamma\lt(\frac{\beta}{2}\rt)}{
\Gamma\lt(\frac{n}{2}\rt)\Gamma\lt(\frac{n+\beta+1}{2}\rt)}\,,
\end{aligned}
\]
where we used the fact that $\Gamma(1+z)=z\Gamma(z)$.
\end{proof}

Lastly, we give an expression of the optimal energy/mass ratio of balls in terms of the
Riesz energy of the unit ball.

\begin{lem}\label{lem_eballriesz}
For any $\alpha\in(0,n)$ we have
\[
\rho_{R_\alpha}^{\ball}
= \frac{n\beta}{\beta-1}\lt(\frac{(\beta-1)\I_{R_\alpha}(B_1)}{n|B_1|}\rt)^{1/\beta},
\]
where $\rho_{R_\alpha}^{\ball}$ is defined by~\cref{def_rho_G^ball} with $G=R_\alpha$.
\end{lem}

\begin{proof}
We have
\[
\frac{\cE_{R_\alpha}(B_R)}{|B_R|}
= \frac{n}{R} + \frac{R^{n-1+\beta}}{|B_1| R^n}\I_{R_\alpha}(B_1)
= n\lt(\dfrac1{R} + R^{\beta-1}\frac{\I_{R_\alpha}(B_1)}{n|B_1|}\rt).
\]
Since $\beta>1$, the infimum over $R$ is reached at the critical point
\[
R_* = \lt(\frac{n|B_1|}{(\beta-1)\I^n_{R_\alpha}(B_1)}\rt)^{1/\beta}
\]
and the value of the infimum is
\[
\rho_{R_\alpha}^{n,\ball}
= \lt(\frac{n\beta}{\beta-1}\rt)\dfrac1{R_*},
\]
which gives the result.
\end{proof}

\subsection{Energy/mass ratio of infinite cylinders}
\label{S_riesz_cylinder}
For a nonnegative measurable kernel $G$, the kernel $G^{\cyl}$ is defined as in \cref{cor:rieszcyl} by
\[
G^{\cyl}(x') \coloneqq \int_{\R} G(x',t)\,dt\qquad \text{ for a.e. }x'\in\R^{n-1}.
\]

Elementary computations lead to the following  lemma.

\begin{lem}\label{lem_ecylriesz}
For every $\alpha\in(1,n)$, we have
\[
R_\alpha^{\cyl}(x') = c_{\alpha} R_{\alpha-1}(x')
\qquad\text{ with }\qquad
c_{\alpha} \coloneqq
\frac{\sqrt{\pi}\Gamma\lt(\frac{\alpha-1}{2}\rt)}{\Gamma\lt(\frac{\alpha}{2}\rt)}.
\]
Then the energy/mass ratio of infinite cylinders of radius $l$ defined by~\cref{def_sigma_G^cyl} is
\[
\sigma_{R_\alpha}^{n,\cyl}(l)
=\frac{n-1}{l}+\frac{c_{\alpha}l^{\beta-1}}{|B_1^{n-1}|}\I_{R_{\alpha-1}}^{n-1}(B_1^{n-1})
\]
and
\[
\rho_{R_\alpha}^{n,\cyl}
=
\frac{(n-1)\beta}{\beta-1}\lt(\frac{c_{\alpha}(\beta-1)\cI_{R_{\alpha-1}}^{n-1}(B_1^{n-1})}{(n-1)|B_1^{n-1}|}\rt)^{1/\beta}.
\]
\end{lem}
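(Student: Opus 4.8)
The plan is to establish the three assertions in order, the first being the only one requiring a genuine computation. First I would compute the reduced kernel $R_\alpha^{\cyl}$ directly from its definition. Writing $r=|x'|$, we have
\[
R_\alpha^{\cyl}(x') = \int_\R \frac{dt}{(|x'|^2+t^2)^{\alpha/2}},
\]
and the substitution $t=|x'|\tan\theta$ turns this into $|x'|^{1-\alpha}\int_{-\pi/2}^{\pi/2}\cos^{\alpha-2}\theta\,d\theta$. The remaining trigonometric integral is a standard Beta-function evaluation, $\int_{-\pi/2}^{\pi/2}\cos^{\alpha-2}\theta\,d\theta=\sqrt\pi\,\Gamma\lt(\tfrac{\alpha-1}{2}\rt)/\Gamma\lt(\tfrac{\alpha}{2}\rt)$, which converges precisely when $\alpha-2>-1$, i.e. $\alpha>1$ — this is exactly the role of the hypothesis. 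This yields $R_\alpha^{\cyl}(x')=c_\alpha|x'|^{1-\alpha}=c_\alpha R_{\alpha-1}(x')$ with the claimed constant $c_\alpha$.

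Next I would insert this into the identity of \cref{cor:rieszcyl}, namely $\sigma_{R_\alpha}^{n,\cyl}(l)=\frac{n-1}{l}+\frac1{|B_l^{n-1}|}\I_{R_\alpha^{\cyl}}^{n-1}(B_l^{n-1})$. Using linearity of $E\mapsto\I_G(E)$ in the kernel, $\I_{c_\alpha R_{\alpha-1}}^{n-1}=c_\alpha\I_{R_{\alpha-1}}^{n-1}$, together with the Riesz scaling relation $\I_{R_{\alpha-1}}^{n-1}(B_l^{n-1})=l^{2(n-1)-(\alpha-1)}\I_{R_{\alpha-1}}^{n-1}(B_1^{n-1})$ already invoked in the proof of \cref{lem_Riesz_nrj_of_balls}, and $|B_l^{n-1}|=l^{n-1}|B_1^{n-1}|$, I collect the powers of $l$. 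Since $2(n-1)-(\alpha-1)-(n-1)=n-\alpha=\beta-1$, the interaction term becomes $c_\alpha l^{\beta-1}\I_{R_{\alpha-1}}^{n-1}(B_1^{n-1})/|B_1^{n-1}|$, giving the stated expression for $\sigma_{R_\alpha}^{n,\cyl}(l)$.

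Finally, the infimum over $l$ is an elementary one-variable minimization identical in structure to the one carried out in \cref{lem_eballriesz}. The function $l\mapsto A/l+Bl^{\beta-1}$, with $A=n-1>0$ and $B=c_\alpha\I_{R_{\alpha-1}}^{n-1}(B_1^{n-1})/|B_1^{n-1}|>0$, is strictly convex on $(0,\oo)$ because $\beta>1$, so it has a unique minimizer at the critical point $l_*=\lt(A/((\beta-1)B)\rt)^{1/\beta}$. Evaluating at $l_*$ and simplifying via the relation $Bl_*^{\beta}=A/(\beta-1)$ gives the minimal value $\frac{A\beta}{\beta-1}\lt(\frac{(\beta-1)B}{A}\rt)^{1/\beta}$, which is exactly the asserted formula for $\rho_{R_\alpha}^{n,\cyl}$. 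The only genuinely computational point is the Beta-function evaluation of the first step; the rest is bookkeeping of scaling exponents and a convex minimization, so I would not expect any substantive obstacle beyond keeping track of the constants.
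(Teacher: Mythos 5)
Your proof follows the same route as the paper's: compute $R_\alpha^{\cyl}=c_\alpha R_{\alpha-1}$, insert this into the identity of \cref{cor:rieszcyl}, use the homogeneity of the Riesz interaction to pull out the factor $l^{\beta-1}$, and minimize the resulting one-variable function. The only cosmetic difference is in the evaluation of $c_\alpha$: you use the substitution $t=|x'|\tan\theta$ and a Wallis-type integral, while the paper substitutes $s=t^2$ and then $u=1/(1+s)$ to recognize the Beta function $B\bigl(\tfrac12,\tfrac{\alpha-1}{2}\bigr)$; both yield $\sqrt{\pi}\,\Gamma\bigl(\tfrac{\alpha-1}{2}\bigr)/\Gamma\bigl(\tfrac{\alpha}{2}\bigr)$, and your remark that convergence forces $\alpha>1$ is exactly the role of the hypothesis. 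The bookkeeping of exponents ($2(n-1)-(\alpha-1)-(n-1)=n-\alpha=\beta-1$) is also correct.

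One justification in your final step is wrong, although the conclusion survives. The function $f(l)=A/l+Bl^{\beta-1}$ is \emph{not} strictly convex on $(0,\infty)$ when $\beta\in(1,2)$, i.e.\ when $\alpha\in(n-1,n)$: in that range $l\mapsto Bl^{\beta-1}$ is strictly concave, and since $f''(l)=2Al^{-3}+(\beta-1)(\beta-2)Bl^{\beta-3}$ with $\beta-3>-3$, the negative second term dominates for large $l$, so $f''(l)<0$ eventually. The correct (and equally short) argument, which is implicitly all the paper uses, is that $l^2f'(l)=-A+(\beta-1)Bl^{\beta}$ is strictly increasing from $-A<0$ to $+\infty$, so $f'$ changes sign exactly once: $f$ is decreasing on $(0,l_*)$ and increasing on $(l_*,\infty)$, where $l_*=\bigl(A/((\beta-1)B)\bigr)^{1/\beta}$ is therefore the unique global minimizer. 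With that repair, your evaluation $f(l_*)=\frac{A\beta}{\beta-1}\bigl(\frac{(\beta-1)B}{A}\bigr)^{1/\beta}$ and the resulting formula for $\rho^{n,\cyl}_{R_\alpha}$ are correct.
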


\begin{proof}
By a change of variable, for $x'\in\R^{n-1}\setminus\{0\}$, we have
\[
R_\alpha^{\cyl}(x')
= \int_{\R} \dfrac{dt}{\lt(|x'|^2+t^2\rt)^{\alpha/2}}\,
=  \dfrac{c_{\alpha}}{|x'|^{\alpha-1}},
\]
where we denote 
\[
c_{\alpha}\coloneqq\int_{\R} \dfrac{dt}{\lt(1+t^2\rt)^{\alpha/2}}\,.
\]
Thus, by \cref{cor:rieszball}, we have
\[
\sigma_{R_\alpha}^{n,\cyl}(l)
=\frac{n-1}{l}+\frac{c_{\alpha}}{|B_l^{n-1}|}\cI^{n-1}_{R_\alpha^{\cyl}}(B_l^{n-1})
=\frac{n-1}{l}+\frac{c_{\alpha}l^{\beta-1}}{|B^{n-1}|}\cI_{R_{\alpha-1}}^{n-1}(B_1^{n-1}).
\]
Since $\beta>1$, the infimum over $l$ is reached at the critical point
\[
l_* \coloneqq
\lt(\frac{(n-1)|B_1^{n-1}|}{c_{\alpha}(\beta-1)\cI_{R_{\alpha-1}}^{n-1}(B_1^{n-1})}\rt)^{1/\beta}.
\]
The minimum is then
\[
\rho_{R_\alpha}^{n,\cyl}
=\lt(\frac{(n-1)\beta}{\beta-1}\rt)\dfrac1{l_*}.
\]
We still have to express $c_{\alpha}$ in terms of the $\Gamma$ function.
Using the changes of variable $s=t^2$ and then $u=\dfrac1{1+s}$, we compute
\[
c_{\alpha}
=\int_0^\oo \dfrac{ds}{\sqrt{s}(1+s)^{\alpha/2}} \,
=\int_0^1 u^{\frac{\alpha-3}{2}}(1-u)^{-\tfrac1{2}}\,du=
B\lt(\dfrac12,\dfrac{\alpha - 1}2\rt)
=\frac{\sqrt{\pi}\Gamma\lt(\tfrac{\alpha-1}{2}\rt)}{\Gamma\lt(\tfrac{\alpha}{2}\rt)},
\]
where $B(x,y)$ is the Beta function, and
where we used the identities $B(x,y)=\Gamma(x)\Gamma(y)/\Gamma(x+y)$ and   $\Gamma(1/2)=\sqrt{\pi}$. This ends the proof of the lemma.
\end{proof}

\subsection{Energy/mass ratio of balls versus infinite cylinders}

\begin{prp}\label{prop_Riesz_rapport>1}
For every $n\ge2$ and $\alpha\in(1,n)$ we have
\be\label{prop_Riesz_rapport>1_1}
\frac{\rho_{R_\alpha}^{\cyl}}{\rho_{R_\alpha}^{\ball}}
=\frac{n-1}{n}\lt(\frac{\Gamma\lt(\frac{n-\beta}{2}\rt)}{\Gamma\lt(\frac{n-\beta+1}{2}\rt)}
\frac{\Gamma\lt(\frac{n+\beta+1}{2}\rt)}
{\Gamma\lt(\frac{n+\beta}{2}\rt)}\rt)^{1/\beta}.
\ee
If in addition $\alpha$ is an integer then we have
\[
\frac{\rho_{R_\alpha}^{\cyl}}{\rho_{R_\alpha}^{\ball}}
> 1.
\]
\end{prp}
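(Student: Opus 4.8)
The plan is to prove the two assertions separately. For the closed-form identity~\cref{prop_Riesz_rapport>1_1}, I would simply divide the expression for $\rho_{R_\alpha}^{\cyl}$ in \cref{lem_ecylriesz} by the one for $\rho_{R_\alpha}^{\ball}$ in \cref{lem_eballriesz}. The prefactors $\tfrac{(n-1)\beta}{\beta-1}$ and $\tfrac{n\beta}{\beta-1}$ immediately yield the factor $\tfrac{n-1}{n}$, the factors $(\beta-1)$ inside the $\beta$-th roots cancel, and one is left with
\[
\frac{\rho_{R_\alpha}^{\cyl}}{\rho_{R_\alpha}^{\ball}}
= \frac{n-1}{n}\lt(\frac{n\,|B_1^n|\,c_\alpha\,\cI_{R_{\alpha-1}}^{n-1}(B_1^{n-1})}{(n-1)\,|B_1^{n-1}|\,\I_{R_\alpha}^n(B_1)}\rt)^{1/\beta}.
\]
The crucial structural observation is that the parameter $\beta=n+1-\alpha$ attached to the pair $(n,\alpha)$ coincides with the one attached to $(n-1,\alpha-1)$, so that \cref{lem_Riesz_nrj_of_balls} applies to both $\I_{R_\alpha}^n(B_1)$ and $\cI_{R_{\alpha-1}}^{n-1}(B_1^{n-1})$ with the \emph{same} $\beta$. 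Substituting those closed forms, together with $c_\alpha=\sqrt\pi\,\Gamma(\tfrac{\alpha-1}{2})/\Gamma(\tfrac{\alpha}{2})$ and the volume ratio $|B_1^n|/|B_1^{n-1}|=\sqrt\pi\,\Gamma(\tfrac{n+1}{2})/\Gamma(\tfrac{n+2}{2})$, all powers of $\pi$ and $2$ cancel; after rewriting $\tfrac{\alpha-1}{2}=\tfrac{n-\beta}{2}$ and $\tfrac{\alpha}{2}=\tfrac{n-\beta+1}{2}$, repeated use of $\Gamma(z+1)=z\Gamma(z)$ collapses the leftover Gamma factors, the key cancellation being $\tfrac{n}{n-1}\cdot\tfrac{\Gamma((n+1)/2)}{\Gamma((n+2)/2)}\cdot\tfrac{\Gamma(n/2)}{\Gamma((n-1)/2)}=1$. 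This produces exactly~\cref{prop_Riesz_rapport>1_1}.

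For the strict inequality, the point is that when $\alpha\in(1,n)$ is an integer, $\beta=n+1-\alpha$ is an integer in $\{2,\dots,n-1\}$. I would then reduce the quotient of four Gamma values in~\cref{prop_Riesz_rapport>1_1} to a finite product by applying $\Gamma(z+\beta)=\Gamma(z)\prod_{j=0}^{\beta-1}(z+j)$ to both $\Gamma(\tfrac{n+\beta}{2})=\Gamma(\tfrac{n-\beta}{2}+\beta)$ and $\Gamma(\tfrac{n+\beta+1}{2})=\Gamma(\tfrac{n-\beta+1}{2}+\beta)$. After cancellation this gives
\[
\frac{\Gamma(\tfrac{n-\beta}{2})}{\Gamma(\tfrac{n-\beta+1}{2})}\frac{\Gamma(\tfrac{n+\beta+1}{2})}{\Gamma(\tfrac{n+\beta}{2})}
= \prod_{j=0}^{\beta-1}\frac{n-\beta+2j+1}{n-\beta+2j}
= \prod_{j=0}^{\beta-1}\lt(1+\frac1{d_j}\rt),\qquad d_j\coloneqq n-\beta+2j,
\]
so the target inequality $\rho_{R_\alpha}^{\cyl}/\rho_{R_\alpha}^{\ball}>1$ becomes $\prod_{j=0}^{\beta-1}(1+1/d_j) > (1+1/(n-1))^\beta$.

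The inequality then follows from convexity. I would observe that the $d_j$ form an arithmetic progression from $n-\beta$ to $n+\beta-2$ whose arithmetic mean is exactly $n-1$, and that $f(x)=\log(1+1/x)=\log(x+1)-\log x$ satisfies $f''(x)=(2x+1)/(x^2+x)^2>0$ on $(0,\infty)$, hence is strictly convex. Since $\beta\ge2$ forces $d_0\ne d_1$, strict Jensen's inequality gives $\tfrac1\beta\sum_j f(d_j) > f(\tfrac1\beta\sum_j d_j)=f(n-1)$, that is $\prod_j(1+1/d_j) > (n/(n-1))^\beta$; taking $\beta$-th roots and multiplying by $\tfrac{n-1}{n}$ closes the argument. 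I expect the main obstacle to be this second part: the Gamma-function bookkeeping in the first part is routine, whereas the inequality hinges on the two insights that the Gamma quotient telescopes into $\prod(1+1/d_j)$ and that the mean of the $d_j$ is precisely $n-1$, which is exactly what makes Jensen beat the threshold $(n/(n-1))^\beta$.
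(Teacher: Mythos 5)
Your proof is correct, and it splits naturally into two parts of different character. The first half (the closed formula~\cref{prop_Riesz_rapport>1_1}) follows exactly the paper's route: divide the expressions of \cref{lem_ecylriesz} and \cref{lem_eballriesz}, use that $\beta(n,\alpha)=\beta(n-1,\alpha-1)$ so that \cref{lem_Riesz_nrj_of_balls} applies to both interaction energies with the same $\beta$, and simplify the Gamma factors; your cancellations check out. The second half is where you genuinely diverge. The paper also reduces the Gamma quotient to a ratio of two descending products, but it then distributes the prefactor $\lt(\tfrac{n-1}{n}\rt)^{\beta}$ over pairs of factors symmetric about $n-1$ and verifies the explicit rational inequality
\[
c_{n,j}=\lt(\frac{n-1}{n}\rt)^2\frac{(n+j)(n-j)}{(n-1+j)(n-1-j)}=\frac{1-(j/n)^2}{1-(j/(n-1))^2}>1 \qquad (j\neq 0),
\]
with a separate parity discussion according to whether $p=\beta-1$ is odd or even. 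You instead write the quotient as $\prod_{j}(1+1/d_j)$ with $d_j=n-\beta+2j$ (all positive since $d_0=\alpha-1>0$), observe that the $d_j$ average exactly to $n-1$, and invoke strict Jensen for the strictly convex function $x\mapsto\log(1+1/x)$; since $\beta\ge2$ the $d_j$ are not all equal, so the inequality is strict. Your argument handles both parities uniformly and is in fact a genuine generalization of the paper's: the per-pair inequality $c_{n,j}>1$ is precisely the two-point (midpoint) instance of your Jensen step applied to $d_j$ and $d_{\beta-1-j}$. What the paper's version buys in exchange is complete elementariness---each comparison is an explicit inequality between rationals, with no appeal to convexity---at the cost of the case split on the parity of $p$.
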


\begin{rmk}
In view of some numerical computations using formula~\cref{eq_expratioriesz} below, we believe that $\rho_{R_\alpha}^{\cyl}>\rho_{R_\alpha}^{\ball}$
whenever $\alpha\in(1,n)$, even if $\alpha$ is not an integer. This remains however to be established.
\end{rmk}

\begin{proof}[Proof of \cref{prop_Riesz_rapport>1}]
Recall that the $\beta$ as defined in~\cref{def_beta} associated with the kernel
$R_{\alpha-1}^{n-1}$ is the same for the kernel $R_\alpha^n$.
Using \cref{lem_eballriesz} and \cref{lem_ecylriesz} we compute
\[
\begin{aligned}
\tau
&\coloneqq{\rho_{R_\alpha}^{\cyl}}/{\rho_{R_\alpha}^{\ball}}\\
&=\lt(\frac{(n-1)\beta}{\beta-1}\lt[\frac{c_{\alpha}(\beta-1)\cI_{R_{\alpha-1}}^{n-1}(B_1^{n-1})}{(n-1)|B_1^{n-1}|}\rt]^{1/\beta}\rt)
\lt(\frac{\beta-1}{n\beta}\lt[\frac{n|B_1^n|}{(\beta-1)\I^n_{R_\alpha}(B_1)}\rt]^{1/\beta}\rt)\\
&=\frac{n-1}{n}\lt[c_{\alpha}\lt(\frac{n|B_1^n|}{(n-1)|B_1^{n-1}|}\rt)
\lt(\frac{\cI_{R_{\alpha-1}}^{n-1}(B_1^{n-1})}{\I^n_{R_\alpha}(B_1)}\rt)\rt]^{1/\beta}\\
&=\frac{n-1}{n}\lt[c_{\alpha}\lt(\frac{\sqrt{\pi}\Gamma\lt(\frac{n-1}{2}\rt)}{\Gamma\lt(\frac{n}{2}\rt)}\rt)
\lt(\frac{\Gamma\lt(\frac{n}{2}\rt)\Gamma\lt(\frac{n+\beta+1}{2}\rt)}
{\pi\Gamma\lt(\frac{n-1}{2}\rt)\Gamma\lt(\frac{n+\beta}{2}\rt)}\rt)\rt]^{1/\beta}\\
&=\frac{n-1}{n}\lt(\frac{c_{\alpha}}{\sqrt{\pi}}
\frac{\Gamma\lt(\frac{n+\beta+1}{2}\rt)}
{\Gamma\lt(\frac{n+\beta}{2}\rt)}\rt)^{1/\beta}.
\end{aligned}
\]
Recalling  $\beta=n+1-\alpha$ and the  expression of $c_{\alpha}$ given in \cref{lem_ecylriesz}, this gives~\cref{prop_Riesz_rapport>1_1}. 

Now, we assume that $\alpha$ (and thus $\beta$) is an integer and we set  $p\coloneqq\beta-1\in (0,n-1)$. We get from~\cref{prop_Riesz_rapport>1_1}
\[
\begin{aligned}
\frac{\Gamma\lt(\frac{n+\beta}{2}\rt)}
{\Gamma\lt(\frac{n-\beta}{2}\rt)}
&= \frac{\Gamma\lt(\frac{n+1+p}{2}\rt)}
{\Gamma\lt(\frac{n-(1+p)}{2}\rt)}\\
&= \dfrac{(n-1+p)(n-1+p-2)(n-1+p-4)\dotsm (n-1-p)}{2^{p+1}}
\end{aligned}
\]
and similarly
\[
\frac{\Gamma\lt(\frac{n+\beta+1}{2}\rt)}
{\Gamma\lt(\frac{n-\beta+1}{2}\rt)}
= \dfrac{(n+p)(n+p-2)(n+p-4)\dotsm (n-p)}{2^{p+1}}.
\]
We obtain
\be\label{eq_expratioriesz}
\tau^\beta
=\lt(\frac{n-1}{n}\rt)^{p+1}\frac{(n+p)(n+p-2)\dotsm (n-p)}{(n-1+p)(n-1+p-2)\dotsm
(n-1-p)}.
\ee
If $p$ is odd, we can regroup the terms by pairs to get 
\[
\tau^\beta  = \prod_{k=0}^{(p-1)/2} c_{n,2k+1},
\]
with  
\[
c_{n,j}\coloneqq\lt(\frac{n-1}{n}\rt)^2 \frac{(n+j)}{n-1+j} \,  \frac{(n-j)}{n-1-j}
=\frac{1-(j/n)^2}{1- (j/(n-1))^2}.
\]
If $p$ is even, after a similar pairing, we are left with the harmless extra factor:
\[
\lt(\frac{n-1}{n}\rt)\lt(\frac{n}{n-1}\rt) = 1.
\]
We find
\[
\tau^\beta= \prod_{k=1}^{p/2} c_{n,2k}.
\]
Since $ c_{n,j}> 1$ for $j\ne0$, we deduce, in both cases, that $\tau^\beta >1$.
\end{proof}

\begin{rmk}\label{rmk_closeto1}
Interestingly, the quotient $\rho_{R_\alpha}^{\cyl} / \rho_{R_\alpha}^{\ball}$ is quite close to 1. Indeed, for $n$ large, using \cref{eq_expratioriesz} and elementary calculus, we deduce that 
\[
\frac{\rho_{R_\alpha}^{\cyl}}{\rho_{R_\alpha}^{\ball}}  = 1+ \dfrac{\beta^2}{3n^3}+O\lt(\dfrac{\beta^2}{n^4}\rt)\qquad\text{ uniformly with respect to }\beta\in(1,n+1).
\]
For small values of $n$, using \cref{eq_expratioriesz},  in the case $\alpha=n-2$ (that is,
$\beta=3$, or equivalently $p=2$)  we obtain for $n=4,5,6,7$,
\begin{align*}
\frac{\rho_{R_2}^{4,\cyl}}{\rho_{R_2}^{4,\ball}}
= \lt(\frac{27}{20}\rt)^{\frac1{3}}\simeq 1.11,
&\qquad\quad
\frac{\rho_{R_3}^{5,\cyl}}{\rho_{R_3}^{5,\ball}}
= \lt(\frac{28}{25}\rt)^{\frac1{3}}\simeq 1.038,\\
\frac{\rho_{R_4}^{6,\cyl}}{\rho_{R_4}^{6,\ball}}
= \lt(\frac{25}{23}\rt)^{\frac1{3}}\simeq 1.028,
&\qquad\quad
\frac{\rho_{R_5}^{7,\cyl}}{\rho_{R_5}^{7,\ball}}
= \lt(\frac{405}{392}\rt)^{\frac1{3}}\simeq 1.011.
\end{align*}
\end{rmk}

\section{Truncated Coulomb potentials}\label{S_truncated_riesz}

We now consider the truncated Riesz potentials 
\[
R_{\alpha,\kappa}(x)\coloneqq 
\ind_{|x|<\kappa}|x|^{-\alpha},
\]
for $n\ge 2$ and $\alpha\in(0,n)$. We restrict the analysis to the Coulomb case $n=3$, $\alpha=1$ only when required. We still use the notation $\beta=\beta(\alpha,n)\coloneqq n+1-\alpha$ of~\cref{def_beta}.

\subsection{Energy/mass ratio of balls}

First we compute the interaction energy on 1D slices as defined in~\cref{eq_defSG}.  
\begin{lem}\label{lem_SRiesztrunc}
For every $\alpha\in (0,n)$, $\kappa>0$ and every $L>0$, we have
\[
S_{R_{\alpha,\kappa}}(L) =
\frac{2}{\beta(\beta-1)}
\begin{cases}
\displaystyle \qquad L^\beta&\quad\text{ if } L\le \kappa\\
\displaystyle \kappa^{\beta-1}(\beta L-(\beta-1) \kappa)&\quad\text{ for }L>\kappa. 
\end{cases}
\]
\end{lem}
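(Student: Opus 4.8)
The plan is to compute $S_{R_{\alpha,\kappa}}(L)$ directly from its definition~\cref{eq_defSG}, reducing the double integral to a one-dimensional one and then splitting according to whether the truncation is felt. First I would substitute $G=R_{\alpha,\kappa}$ into~\cref{eq_defSG}: since $R_{\alpha,\kappa}(x)=\ind_{|x|<\kappa}|x|^{-\alpha}$ and $n-1-\alpha=\beta-2$, the integrand collapses to
\[
|s-t|^{n-1}R_{\alpha,\kappa}(s-t)=\ind_{|s-t|<\kappa}\,|s-t|^{\beta-2},
\]
so that $S_{R_{\alpha,\kappa}}(L)=\int_0^L\int_0^L \ind_{|s-t|<\kappa}\,|s-t|^{\beta-2}\,ds\,dt$.

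Next I would reduce this to a single integral. For any $g$ depending only on $|s-t|$, the symmetry $s\leftrightarrow t$ together with the same elementary change of variables $r=s-t$ used in the previous lemma yields
\[
\int_0^L\int_0^L g(|s-t|)\,ds\,dt = 2\int_0^L (L-r)\,g(r)\,dr.
\]
Applying this with $g(r)=\ind_{r<\kappa}\,r^{\beta-2}$ gives $S_{R_{\alpha,\kappa}}(L)=2\int_0^{\min(L,\kappa)}(L-r)\,r^{\beta-2}\,dr$. The integrand is integrable near $r=0$ precisely because $\beta-2>-1$, i.e.\ $\beta>1$, which holds since $\alpha<n$.

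It then remains to treat the two regimes. If $L\le\kappa$, the indicator is identically $1$ on the domain and the computation reproduces the untruncated case, giving $\frac{2}{\beta(\beta-1)}L^\beta$, in agreement with the previous lemma. If $L>\kappa$, the upper limit becomes $\kappa$ and I would evaluate the two power integrals explicitly:
\[
2\int_0^\kappa (L-r)\,r^{\beta-2}\,dr = 2\lt(L\,\frac{\kappa^{\beta-1}}{\beta-1}-\frac{\kappa^\beta}{\beta}\rt) = \frac{2}{\beta(\beta-1)}\,\kappa^{\beta-1}\bigl(\beta L-(\beta-1)\kappa\bigr),
\]
which is exactly the claimed formula. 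Every step is an explicit elementary integration, so there is no real obstacle; the only point deserving a little care is the case split at $L=\kappa$, where one checks that both branches agree (both equal $\frac{2}{\beta(\beta-1)}\kappa^\beta$), confirming the formula is continuous and correct across the threshold.
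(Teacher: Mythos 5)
Your proof is correct and takes essentially the same route as the paper: substitute the truncated kernel into the definition, reduce the double integral via the change of variables $r=s-t$ and Fubini, and split cases at $L=\kappa$. The only cosmetic difference is that you integrate out $t$ first to obtain the weight $2\int_0^{\min(L,\kappa)}(L-r)\,r^{\beta-2}\,dr$, whereas the paper keeps the iterated form $2\int_0^L\big[\int_0^t r^{\beta-2}\ind_{r<\kappa}\,dr\big]\,dt$ and integrates in the other order; both evaluate to the same elementary antiderivatives.
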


\begin{proof}
For $L\le \kappa$ we have $|s-t|<\kappa$ on $(0,L)\times (0,L)$ so that the truncation has no effect and $S_{R_{\alpha,\kappa}}(L)=S_{R_\alpha}(L)$. 
Assuming  $L<\kappa$, we compute
\begin{align*}
S_{R_{\alpha,\kappa}}(L) &=\int_0^L \int_0^L |s-t|^{n-1-\alpha}\ind_{|s-t|<\kappa}\,ds\, dt\\
&= \int_0^L \lt( \int_{-t}^{L-t}  |r|^{n-1-\alpha}\ind_{|r|<\kappa}\,dr\rt)\,dt\\ 
&=2 \int_0^L \lt[\int_0^t r^{n-1-\alpha}\ind_{r<\kappa}\,dr \rt]\, dt\\
&=\dfrac2{n-\alpha} \int_0^L \min(\kappa, t)^{n-\alpha} \,dt\\
&=\dfrac2{n-\alpha}
 \lt(
 \int_0^\kappa t^{n-\alpha}\, dt 
+
(L-\kappa)  \kappa^{n-\alpha} 
\rt)\\
&=\dfrac2{n-\alpha}\lt(\dfrac{\kappa^{n-\alpha+1}}{n-\alpha+1} + (L-\kappa)  \kappa^{n-\alpha} \rt).
\end{align*}
We conclude by inserting the identity $\alpha=n+1-\beta$ and simplifying.
\end{proof}

Next, we deduce an expression of the truncated Riesz energy of balls in terms of the incomplete Beta
function.

\begin{lem}\label{lem_rieszcut}
Let $\alpha\in(0,n)$, $\kappa>0$ and $R>0$. Writing
\[
B(z; x, y) = \int_0^z t^{x-1} (1-t)^{y-1} \,dt,
\qquad\forall z\in\R,~x>0,~y>0
\]
for the incomplete Beta function and setting $\lambda=\frac{\kappa}{2R}$, we have
\[
\begin{aligned}
\frac{\I_{{R}_{\alpha,\kappa}}(B_R)}{|B_R|}
&=
\frac{n(n-1)|B_1^{n-1}|\kappa^{\beta-1}}{\beta(\beta-1)}
\lt[
\dfrac1{\lambda^{\beta-1}} B\lt( \lambda^2; \frac{\beta+2}{2},\frac{n-1}{2}\rt)\rt.\\
&\qquad\qquad\qquad
\lt.
-\frac{2(\beta-1)\lambda}{n-1} (1-\lambda^2)^{\frac{n-1}{2}} 
+ \beta B\lt(1-\lambda^2; \frac{n-1}{2}, \frac{3}{2}\rt)
\rt],
\end{aligned}
\]
if $\kappa\le 2R$, and $\I_{R_{\alpha,\kappa}}(B_R) =\I_{R_{\alpha}}(B_R)$ if $\kappa>2R$.
\end{lem}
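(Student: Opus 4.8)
The plan is to feed the piecewise expression for $S_{R_{\alpha,\kappa}}$ from \cref{lem_SRiesztrunc} into the ball formula of \cref{cor:rieszball}, and then reduce every resulting one-dimensional integral to an incomplete Beta function by elementary substitutions. Recall from \cref{cor:rieszball} that $\I^n_{R_{\alpha,\kappa}}(B_R)=\tfrac12|\Sp^{n-1}||\Sp^{n-2}|\int_0^R S_{R_{\alpha,\kappa}}\big(2\sqrt{R^2-r^2}\big)r^{n-2}\,dr$. Along this integral the slice length $L=2\sqrt{R^2-r^2}$ decreases monotonically from $2R$ at $r=0$ to $0$ at $r=R$, so the threshold $L=\kappa$ appearing in \cref{lem_SRiesztrunc} is crossed exactly at $r_0\coloneqq\sqrt{R^2-\kappa^2/4}=R\sqrt{1-\lambda^2}$, where $\lambda=\kappa/(2R)$; one has $L>\kappa$ on $(0,r_0)$ and $L\le\kappa$ on $(r_0,R)$.

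First I would dispose of the regime $\kappa>2R$: there $L\le 2R<\kappa$ for every $r\in(0,R)$, so the truncation is inactive, $S_{R_{\alpha,\kappa}}=S_{R_\alpha}$ on the whole range, and hence $\I_{R_{\alpha,\kappa}}(B_R)=\I_{R_\alpha}(B_R)$, as asserted. This also confirms consistency of the formula below at the endpoint $\kappa=2R$, i.e.\ $\lambda=1$.

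For the main case $\kappa\le 2R$ I would split $\int_0^R$ at $r_0$ and insert the two branches of \cref{lem_SRiesztrunc}. The short-slice part over $(r_0,R)$ produces, up to the constant $\tfrac{2}{\beta(\beta-1)}2^\beta$ carried by $S$, the integral $\int_{r_0}^R (R^2-r^2)^{\beta/2}r^{n-2}\,dr$; substituting $u=r/R$ and then $v=1-u^2$ turns this into $\tfrac{R^{n+\beta-1}}{2}\,B\big(\lambda^2;\tfrac{\beta+2}{2},\tfrac{n-1}{2}\big)$, which furnishes the first term of the claimed formula. The long-slice part over $(0,r_0)$ uses $S=\tfrac{2}{\beta(\beta-1)}\kappa^{\beta-1}(\beta L-(\beta-1)\kappa)$ and splits into a piece linear in $L$ and a constant piece: the former reduces, via $u=r/R$ then $w=u^2$, to $\int_0^{r_0}\sqrt{R^2-r^2}\,r^{n-2}\,dr=\tfrac{R^n}{2}\,B\big(1-\lambda^2;\tfrac{n-1}{2},\tfrac32\big)$, giving the $\beta$-term, while the latter is the elementary $\int_0^{r_0}r^{n-2}\,dr=\tfrac{r_0^{n-1}}{n-1}$, which yields the explicit $(1-\lambda^2)^{(n-1)/2}$ term.

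It then remains to collect the three contributions and normalize. Here I would substitute $|\Sp^{n-1}|=n|B_1^n|$ and $|\Sp^{n-2}|=(n-1)|B_1^{n-1}|$, divide by $|B_R|=|B_1^n|R^n$, and rewrite the common power of $R$ through $\kappa=2R\lambda$, so that $(2R)^{\beta-1}=\kappa^{\beta-1}\lambda^{1-\beta}$; distributing this $\lambda^{1-\beta}$ across the three terms produces exactly the factor $\lambda^{-(\beta-1)}$ on the first incomplete Beta, the bare $\beta$ in front of the second, and the single $\lambda$ in the middle term, matching the stated expression. The only genuine difficulty is bookkeeping: choosing the direction of each substitution ($v=1-u^2$ versus $w=u^2$) so that the limits land on $\lambda^2$ respectively $1-\lambda^2$ and the parameters come out as $\big(\tfrac{\beta+2}{2},\tfrac{n-1}{2}\big)$ and $\big(\tfrac{n-1}{2},\tfrac32\big)$, while keeping track of the powers of $2$, $R$, $\kappa$ and $\lambda$. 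There is no conceptual obstacle.
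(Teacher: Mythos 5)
Your proposal is correct and follows essentially the same route as the paper's proof: split the radial integral from \cref{cor:rieszball} at $r_0=R\sqrt{1-\lambda^2}$ according to the two branches of \cref{lem_SRiesztrunc}, reduce the two nontrivial pieces to $\tfrac{R^{n+\beta-1}}{2}B\big(\lambda^2;\tfrac{\beta+2}{2},\tfrac{n-1}{2}\big)$ and $\tfrac{R^{n}}{2}B\big(1-\lambda^2;\tfrac{n-1}{2},\tfrac32\big)$ by the same substitutions $u=r/R$, $t=u^2$ (plus the reflection $v=1-t$), and normalize using $|\Sp^{n-1}||\Sp^{n-2}|=n(n-1)|B_1^n||B_1^{n-1}|$ and $\kappa=2\lambda R$. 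The only quibble is phrasing: the factor $\lambda^{1-\beta}$ is not literally ``distributed across the three terms'' (each term is rescaled separately, the middle one via $\kappa^\beta/R=2\lambda\kappa^{\beta-1}$), but the final coefficients you state are the right ones.
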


\begin{proof}
In view of the expression of $S_{R_{\alpha,\kappa}}$ in \cref{lem_SRiesztrunc}, it is obvious that
$\I_{{R}_{\alpha,\kappa}}(B_R)=\I_{R_\alpha}(B_R)$ if $\kappa>2R$.
If $\kappa\le 2R$, by \cref{lem_SRiesztrunc} and \cref{cor:rieszball}, we have
\[
\begin{aligned}
\I_{{R}_{\alpha,\kappa}}(B_R)
&= \dfrac1{2}|\Sp^{n-1}||\Sp^{n-2}|\int_0^R
S_{\alpha,\kappa}^n\big(2\sqrt{R^2-r^2}\big)r^{n-2}\,dr\\
&=\frac{|\Sp^{n-1}||\Sp^{n-2}|}{\beta(\beta-1)}
\lt(
2^\beta\int_{\sqrt{R^2-(\kappa/2)^2}}^R (R^2-r^2)^{\frac{\beta}{2}} r^{n-2}\,dr\rt.\\
&\qquad\qquad
\lt.+\kappa^{\beta-1}\int_0^{\sqrt{R^2-(\kappa/2)^2}} \lt(2\beta\sqrt{R^2-r^2}-(\beta-1)\kappa\rt) r^{n-2}\,dr
\rt)
\end{aligned}
\]
thus 
\begin{multline*}
\I_{{R}_{\alpha,\kappa}}(B_R)
= \frac{|\Sp^{n-1}||\Sp^{n-2}|}{\beta(\beta-1)}
\lt(
2^\beta\int_{\sqrt{R^2-(\kappa/2)^2}}^R (R^2-r^2)^{\frac{\beta}{2}} r^{n-2}\,dr\rt.\\
\lt.  -\frac{(\beta-1)\kappa^\beta}{n-1}\lt(R^2-(\kappa/2)^2\rt)^{(n-1)/2}
+2\beta \kappa^{\beta-1}\int_0^{\sqrt{R^2-(\kappa/2)^2}} \sqrt{R^2-r^2}\, r^{n-2}\,dr
\rt).
\end{multline*}
Using the changes of variables $s=r/R$ followed by $t=s^2$, we compute
\[
\begin{aligned}
\int_0^{\sqrt{R^2-(\kappa/2)^2}} \sqrt{R^2-r^2} r^{n-2}\,dr
&= R^n \int_0^{\sqrt{1-(\kappa/(2R))^2}} \sqrt{1-s^2} s^{n-2}\,ds\\
&= \frac{R^n}{2} \int_0^{1-\lt(\frac{\kappa}{2R}\rt)^2} (1-t)^{\frac1{2}}
t^{\frac{n-3}{2}}\,dt\\
&= \frac{R^n}{2} B\lt( 1-\lt(\frac{\kappa}{2R}\rt)^2; \frac{n-1}{2}, \frac{3}{2}\rt)
\end{aligned}
\]
and
\[
\begin{aligned}
\int_{\sqrt{R^2-(\kappa/2)^2}}^R (R^2-r^2)^{\frac{\beta}{2}} r^{n-2}\,dr
&=R^{n+\beta-1}\int_{\sqrt{1-(\kappa/(2R))^2}}^1 (1-s^2)^{\frac{\beta}{2}} s^{n-2}\,ds\\
&=\frac{R^{n+\beta-1}}{2} \int_{1-\lt(\frac{\kappa}{2R}\rt)^2}^1 (1-t)^{\frac{\beta}{2}}
t^{\frac{n-3}{2}}\,dt\\
&=\frac{R^{n+\beta-1}}{2} \int_0^{\lt(\frac{\kappa}{2R}\rt)^2} u^{\frac{\beta}{2}}
(1-u)^{\frac{n-3}{2}}\,du\\
&=\frac{R^{n+\beta-1}}{2}B\lt( \lt(\frac{\kappa}{2R}\rt)^2; \frac{\beta+2}{2}, \frac{n-1}{2}\rt).
\end{aligned}
\]
Hence, setting $\lambda=\frac{\kappa}{2R}$, we find
\[
\begin{aligned}
&\I_{{R}_{\alpha,\kappa}}(B_R)\\
&\quad=\frac{n(n-1)|B_1^n||B_1^{n-1}|}{\beta(\beta-1)}
\cdot\lt[
2^{\beta-1} R^{n+\beta-1}
B\lt( \lt(\frac{\kappa}{2R}\rt)^2; \frac{\beta+2}{2}, \frac{n-1}{2}\rt)
\rt.\\
&\qquad
\lt.-\frac{(\beta-1)\kappa^\beta
R^{n-1}}{n-1}\lt(1-\lt(\frac{\kappa}{2R}\rt)^2\rt)^{\frac{n-1}{2}}
+ \beta\kappa^{\beta-1} R^n B\lt(1-\lt(\frac{\kappa}{2R}\rt)^2; \frac{n-1}{2},
\frac{3}{2}\rt)\rt]\\
&\quad= \frac{n(n-1)|B_1^n||B_1^{n-1}|}{2^n\beta(\beta-1)} \kappa^{n+\beta-1}
\lt[
\dfrac1{\lambda^{n+\beta-1}} B\lt(\lambda^2; \frac{\beta+2}{2},\frac{n-1}{2} \rt)\rt.\\
&\qquad\qquad\qquad\qquad
\lt.
-\frac{2(\beta-1)}{(n-1)\lambda^{n-1}} (1-\lambda^2)^{\frac{n-1}{2}} 
+ \frac{\beta}{\lambda^n} B\lt(1-\lambda^2; \frac{n-1}{2}, \frac{3}{2}\rt)
\rt],
\end{aligned}
\]
which gives the result using that $\kappa^n = (2\lambda R)^n$ and $|B_R| = |B_1| R^n$.
\end{proof}

Lastly, we obtain the optimal energy/mass ratio in the truncated Riesz case, in dimension
$n=3$ for every $\alpha\in(0,3)$.

\begin{prp}\label{prp_eballriesztrunc}
Let $n=3$, $\alpha\in(0,3)$, $\beta=4-\alpha$, $\kappa>0$ and let us set
\[
\kappa_{\min} \coloneqq \lt(\frac{\beta}{\pi}\rt)^{1/\beta},
\qquad
\kappa_{\max} \coloneqq \lt(\frac{\beta(\beta+2)}{2\pi}\rt)^{1/\beta},
\qquad 
\lambda_{*,\alpha,\kappa} =
\lt[\frac{\beta+2}{\pi}\lt(\frac{\pi}{\beta}-\dfrac1{\kappa^\beta}\rt)\rt]^{1/2}
\]
and
\[
f_{\alpha,\kappa}(\lambda) \coloneqq \frac{6\lambda}{\kappa} + 6\pi\kappa^{\beta-1}
\lt( \frac{\lambda^3}{3(\beta+2)} -\frac{\lambda}{\beta}+\frac{2}{3(\beta-1)}\rt).
\]
There holds:
\be\label{prp_eballriesztrunc_1}
\rho^{3,\ball}_{R_{\alpha,\kappa}}=
\begin{cases}
f_{\alpha,\kappa}(0)=\frac{4\pi\kappa^{\beta-1}}{\beta-1}
&\text{if }0<\kappa\le\kappa_{\min},\medskip\\
\quad f_{\alpha,\kappa}(\lambda_{*,\alpha,\kappa})
&\text{if }\kappa_{\min}<\kappa<\kappa_{\max},\medskip\\
\qquad \rho^{3,\ball}_{R_\alpha}
&\text{if }\kappa\ge\kappa_{\max}.
\end{cases}
\ee

\end{prp}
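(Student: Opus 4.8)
The plan is to reduce the minimization over $R>0$ to the analysis of a single explicit function. Writing $\lambda=\kappa/(2R)$, so that $\lambda\le1$ exactly when $\kappa\le2R$, I would first treat the regime $R\ge\kappa/2$. Here I would specialize \cref{lem_rieszcut} to $n=3$: the incomplete Beta functions $B\!\left(\lambda^2;\tfrac{\beta+2}{2},1\right)$ and $B\!\left(1-\lambda^2;1,\tfrac32\right)$ have an integer parameter, so they integrate to elementary expressions in $\lambda$. Substituting $|B_1^{2}|=\pi$, adding the perimeter contribution $P(B_R)/|B_R|=3/R=6\lambda/\kappa$, and simplifying the resulting polynomial in $\lambda$, I expect to land exactly on $\cE_{R_{\alpha,\kappa}}(B_R)/|B_R|=f_{\alpha,\kappa}(\lambda)$. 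For the complementary regime $R<\kappa/2$ (that is $\lambda>1$), \cref{lem_SRiesztrunc} shows the truncation is inactive on $B_R\times B_R$, so the ratio coincides with the untruncated Riesz ratio $\cE_{R_\alpha}(B_R)/|B_R|$ studied in \cref{lem_eballriesz}.

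The second step is to minimize $f_{\alpha,\kappa}$ over $\lambda\in[0,1]$. Differentiating gives $f_{\alpha,\kappa}'(\lambda)=\frac{6}{\kappa}+6\pi\kappa^{\beta-1}\left(\frac{\lambda^2}{\beta+2}-\frac1\beta\right)$, which is strictly increasing in $\lambda$; hence $f_{\alpha,\kappa}$ is convex on $[0,\infty)$ with at most one interior critical point. Solving $f_{\alpha,\kappa}'(\lambda)=0$ yields precisely $\lambda^2=\lambda_{*,\alpha,\kappa}^2=\frac{\beta+2}{\beta}-\frac{\beta+2}{\pi\kappa^\beta}$, which is real iff $\kappa>\kappa_{\min}$ and satisfies $\lambda_{*,\alpha,\kappa}\le1$ iff $\kappa\le\kappa_{\max}$, by a direct check against the definitions of $\kappa_{\min},\kappa_{\max}$. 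This trichotomy gives: for $\kappa\le\kappa_{\min}$ one has $f_{\alpha,\kappa}'\ge0$, so the minimum on $[0,1]$ is the value at $\lambda=0$, namely $f_{\alpha,\kappa}(0)=\frac{4\pi\kappa^{\beta-1}}{\beta-1}$ (approached as $R\to\infty$ and equal to the bulk interaction $\int_{|z|<\kappa}|z|^{-\alpha}\,dz$); for $\kappa_{\min}<\kappa<\kappa_{\max}$ it is the interior value $f_{\alpha,\kappa}(\lambda_{*,\alpha,\kappa})$; and for $\kappa\ge\kappa_{\max}$ one has $f_{\alpha,\kappa}'\le0$ on $[0,1]$, so the minimum is $f_{\alpha,\kappa}(1)$.

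The third step glues the two regimes. From \cref{lem_eballriesz} the untruncated Riesz ratio is strictly decreasing on $(0,R_*)$ and attains its global minimum $\rho^{3,\ball}_{R_\alpha}$ at its critical point $R_*$; using the value of $\I^3_{R_\alpha}(B_1)$ from \cref{lem_Riesz_nrj_of_balls}, I would establish the key identity $2R_*=\kappa_{\max}$. Consequently, when $\kappa<\kappa_{\max}$ one has $\kappa/2<R_*$, so the Riesz ratio is decreasing on the whole small-ball interval $(0,\kappa/2)$ and its infimum there equals its boundary value $f_{\alpha,\kappa}(1)$ at the junction $R=\kappa/2$ (where the two expressions agree by continuity, since there $\kappa=2R$); as $f_{\alpha,\kappa}(1)\ge\min_{[0,1]}f_{\alpha,\kappa}$, the small-ball regime contributes nothing smaller and the overall infimum is $\min_{[0,1]}f_{\alpha,\kappa}$, giving the first two cases. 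When $\kappa\ge\kappa_{\max}$ one has $R_*\le\kappa/2$, so the global Riesz minimizer lies in the small-ball regime and $\rho^{3,\ball}_{R_\alpha}\le f_{\alpha,\kappa}(1)=\min_{[0,1]}f_{\alpha,\kappa}$, yielding the third case.

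The main obstacle is the explicit evaluation in the first step: reducing the incomplete Beta functions at $n=3$ and tracking every constant so as to recover exactly the stated polynomial $f_{\alpha,\kappa}$. The conceptual crux, however, is the gluing in the third step, and in particular the identity $2R_*=\kappa_{\max}$, which is what forces the branch $\kappa\ge\kappa_{\max}$ to collapse onto the pure Riesz value $\rho^{3,\ball}_{R_\alpha}$; care is also needed to justify rigorously that minimizing over all $R>0$ reduces to minimizing $f_{\alpha,\kappa}$ on $[0,1]$ in the first two cases, rather than picking up a strictly smaller contribution from small balls.
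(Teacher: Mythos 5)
Your proposal is correct and follows essentially the same route as the paper: reduce to the explicit function $f_{\alpha,\kappa}(\lambda)$ on $\lambda\in[0,1]$ via \cref{lem_rieszcut}, run the sign analysis of $f_{\alpha,\kappa}'$ to get the trichotomy at $\kappa_{\min},\kappa_{\max}$, and glue with the untruncated regime $R<\kappa/2$ through the identity $(2R_{*,\alpha})^\beta=\frac{\beta(\beta+2)}{2\pi}=\kappa_{\max}^\beta$, which is precisely the pivot of the paper's case analysis. The only cosmetic difference is in the gluing: you use monotonicity of the Riesz ratio on $(0,R_{*,\alpha})$ together with continuity at the junction $R=\kappa/2$ to identify the small-ball infimum with $f_{\alpha,\kappa}(1)$, whereas the paper compares against $Q=\inf_{R\le\kappa/2}\cE_{R_\alpha}(B_R)/|B_R|$ using in addition that truncation can only decrease the energy — both arguments are equivalent in substance.
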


\begin{proof}
We write $\lambda=\kappa/(2R)$.
Since $n=3$,
\[
\frac{P(B_R)}{|B_R|} = \frac{3}{R} = \frac{6 \lambda}{\kappa},
\]
by \cref{lem_rieszcut} we have for $R\ge\kappa/2$ (that is, $\lambda\in(0,1]$),
\begin{multline}\label{proof_prp_eballriesztrunc_1}
\frac{\cE_{R_{\alpha,\kappa}}(B_R)}{|B_R|}
= \frac{6}{\kappa}\lambda + \frac{6\pi\kappa^{\beta-1}}{\beta(\beta-1)}\times\medskip
\\
\lt[
\dfrac1{\lambda^{\beta-1}} B\lt( \lambda^2; \frac{\beta+2}{2}, 1\rt)
-\frac{2(\beta-1)\lambda}{n-1} (1-\lambda^2) 
+ \beta B\lt(1-\lambda^2; 1, \frac{3}{2}\rt)
\rt]\\
 \eqqcolon f_{\alpha,\kappa}(\lambda).
\end{multline}
 When $\kappa> 2R$ we simply have
\[
\frac{\cE_{R_{\alpha,\kappa}}(B_R)}{|B_R|}
=\frac{\cE_{R_\alpha}(B_R)}{|B_R|},
\]
Hence,
\be\label{proof_prp_eballriesztrunc_2}
\rho^{3,\ball}_{R_{\alpha,\kappa}}  =\min\lt(\inf_{(0,1]}f_{\alpha,\kappa},\ \inf_{R\le \kappa/2}\frac{\cE_{R_\alpha}(B_R)}{|B_R|}\rt).
\ee

In the rest of the proof, we simply write $f$ for $f_{\alpha,\kappa}$ to lighten the
notation. We first look for the infimum of the function $f$ defined on $[0,1]$. We compute the terms involving $\beta$-functions as follows:
\[
B\lt(\lambda^2; \frac{\beta+2}{2}, 1\rt)
= \int_0^{\lambda^2} t^{\frac{\beta}{2}}\,dt
= \frac{2 \lambda^{\beta+2}}{\beta+2}
\]
and
\[
B\lt(1-\lambda^2; 1,\frac{3}{2}\rt)
= \int_0^{1-\lambda^2} (1-t)^{\frac1{2}}\,dt
= \frac{2}{3}(1-\lambda^3).
\]
Substituting these identities in \cref{proof_prp_eballriesztrunc_1}, we compute
\[
\begin{aligned}
f(\lambda)
&= \frac{6\lambda}{\kappa}+ \frac{6\pi\kappa^{\beta-1}}{\beta(\beta-1)}
\lt( \frac{2\lambda^3}{\beta+2}-(\beta-1)(\lambda-\lambda^3)+\frac{2\beta}{3}(1-\lambda^3)
\rt)\\
&= \frac{6\lambda}{\kappa}+ 6\pi\kappa^{\beta-1}
\lt( \frac{\lambda^3}{3(\beta+2)} -\frac{\lambda}{\beta}+\frac{2}{3(\beta-1)}\rt).
\end{aligned}
\]
This function is smooth, and after computation, we find
\[
f'(\lambda) = \frac{6}{\kappa}
\lt[\lt(1-\frac{\pi\kappa^\beta}{\beta}\rt)+\frac{\pi\kappa^\beta}{\beta+2}
\lambda^2\rt].
\]
There are two cases:\medskip

\noindent
\textit{Case 1, $\kappa^\beta\le \beta/\pi=\kappa_{\min}^\beta$.}\\  
In that case, $f$ is increasing on $(0,1)$ so that the minimum is reached at $\lambda=0$. We have
\[
\inf_{(0,1]} f= f(0) = \frac{4\pi\kappa^{\beta-1}}{\beta-1}.
\]
Note that $\lambda=0$ corresponds to $R\to+\oo$, which is expected.\footnote{When the range of the
interaction is small enough, the  energy/mass ratio of $B_R$ improves as $R$ increases. Indeed, the contribution of the interaction energy is  constant up to  $O(
\kappa/R)$ and the contribution of the perimeter decreases in $O(1/R)$.}
In view of~\cref{{proof_prp_eballriesztrunc_2}} we now have to compare this value with
\[
Q\coloneqq\inf_{R\le\kappa/2}~ \frac{\cE_{R_\alpha}(B_R)}{|B_R|}.
\]
For this we remark that
 \be\label{proof_prp_eballriesztrunc_3}
f(0)
= \inf_{R\ge\kappa/2} \frac{\cE_{R_{\alpha,\kappa}}(B_R)}{|B_R|}.
\ee
By the assumption $\kappa\le\kappa_{\min}$, one checks that
\[
R_{*,\alpha} = \argmin_{R>0}~ 
\frac{\cE_{R_\alpha}(B_R)}{|B_R|}
=\lt(\frac{n|B_1^n|}{(\beta-1)\I_{R_\alpha}(B_1)}\rt)^{1/\beta}
\]
satisfies $R_{*,\alpha}\ge \kappa/2$, therefore, 
\[
\inf_{R>0} \frac{\cE_{R_\alpha}(B_R)}{|B_R|} =  \frac{\cE_{R_\alpha}(B_{R_{*,\alpha}})}{|B_{R_{*,\alpha}}|} = Q.
\]
With~\cref{proof_prp_eballriesztrunc_3}, we get $f(0)\le Q$ so that $\rho^{3,\ball}_{R_{\alpha,\kappa}} =f(0)= \frac{4\pi\kappa^{\beta-1}}{\beta-1}$. This proves the first case of~\cref{prp_eballriesztrunc_1}.
\medskip
%

\noindent
\textit{Case 2, $\kappa^\beta_{\min}=\frac{\beta}{\pi}<\kappa^\beta$.} Denoting
\[
\lambda_{*,\alpha,\kappa} =
\lt[\frac{\beta+2}{\pi}\lt(\frac{\pi}{\beta}-\dfrac1{\kappa^\beta}\rt)\rt]^{1/2}
\]
the positive root of $f'$, we consider two sub-cases:
\begin{enumerate}[(a)]
\item \textit{Assume $\lambda_{*,\alpha,\kappa}\ge 1$.} A simple computation shows that this is equivalent to
\[
\kappa^\beta \ge \kappa_{\max}^\beta = \frac{\beta(\beta+2)}{2\pi} = (2R_{*,\alpha})^\beta .
\]
In that case $f$ is decreasing on $(0,1)$ and the minimum of $f$ is
reached in $\lambda=1$. Its value is 
\[
f(1)=\frac{\cE_{R_{\alpha,\kappa}}(B_{\kappa/2})}{|B_{\kappa/2}|}= \frac{\cE_{R_\alpha}(B_{\kappa/2})}{|B_{\kappa/2}|}.
\]
There remains to compare $f(1)$ with $\cE_{R_\alpha}(B_R)/|B_R|$ for $R\le\kappa/2$. Since $R_{*,\alpha}\le\kappa/2$, we have 
\[
f(1)= \frac{\cE_{R_\alpha}(B_{\kappa/2})}{|B_{\kappa/2}|} 
\ge\inf_{0<R<\kappa/2} \frac{\cE_{R_\alpha}(B_R)}{|B_R|} 
 = \frac{\cE_{R_\alpha}(B_{R_{*,\alpha}})}{|B_{R_{*,\alpha}}|}
=\rho^{3,\ball}_{R_\alpha}.
\]
Hence $\rho^{3,\ball}_{R_{\alpha,\kappa}}=\rho^{3,\ball}_{R_\alpha}$ which proves the third case of~\cref{prp_eballriesztrunc_1}.\medskip

\item \textit{Assume  $\lambda_{*,\alpha,\kappa}<1$.}   We have
\[
\kappa_{\min}^\beta < \kappa^\beta < \kappa_{\max}^\beta.
\]
The function $f$ is decreasing on $(0,\lambda_*)$ and increasing on $(\lambda_*,1)$,
so that its minimum is reached in $\lambda_*$.
Since $\kappa < \kappa_{\max} = 2R_{*,\alpha}$, we have
\[
f(\lambda_*)
= \inf_{R>\kappa/2} \frac{\cE_{\alpha,\kappa}(B_R)}{|B_R|}
\ \le\  \inf_{R>\kappa/2} \frac{\cE_{\alpha}(B_R)}{|B_R|}
=  \frac{\cE_{\alpha}(B_{R_{*,\alpha}})}{|B_{R_{*,\alpha}}|}
=\rho^{3,\ball}_{R_{\alpha}}.
\]
We conclude that $\rho^{3,\ball}_{R_{\alpha,\kappa}}=f(\lambda_*)$ which is the second case of~\cref{prp_eballriesztrunc_1}.
\end{enumerate}
The proposition is established.
\end{proof}

\subsection{Energy/mass ratio of infinite cylinders}
\label{S_truncated_riesz_cylinder}
In the rest of the section we work in dimension $n=3$. To simplify notation we write $B_R$ for balls in
$\R^3$ and $B_R'$ for balls in $\R^2$.

We compute the energy/mass ratio for the infinite cylinder of radius $l$.

\begin{prp}\label{prp_ecyltruncriesz}
Let $n=3$, $\alpha=1$, $\kappa>0$ and $l\leq \kappa/2$. Setting $\lambda=\frac{\kappa}{2l}$ and
\[
g(\ell)= 
\frac{4\pi}{3}\lt[\ell-\arcsin(\ell)+2\ell\lt(1-\sqrt{1-\ell^2}\rt)
+\ell^3\argth\lt(\sqrt{1-\ell^2}\rt)\rt],\quad\forall\ell<1,
\]
we have
\[
\I^2_{R^{\cyl}_{1,\kappa}}(B_l^2)
= 2\kappa^2 l^2\lambda \int_0^{1} g\lt(\lambda^{-1}\sqrt{1-r^2}\rt)\,dr.
\]
As a consequence, the energy/mass ratio of infinite cylinders of radius $l$  is
\begin{equation}\label{fcyl_integral}
\sigma_{R_{1,\kappa}}^{3,\cyl}(l)
=\frac{4\lambda}{\kappa}+\frac{2\lambda \kappa^2}{\pi} \int_0^{1} g\lt(\lambda^{-1}\sqrt{1-r^2}\rt)\,dr.
\end{equation}
\end{prp}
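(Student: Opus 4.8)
The plan is to reduce everything to the one-dimensional slicing formula of \cref{prp_1Dslices} and \cref{cor:rieszball}, after first making the planar kernel $R_{1,\kappa}^{\cyl}$ explicit. First I would compute $R_{1,\kappa}^{\cyl}$: for $x'\in\R^2$ with $\rho\coloneqq|x'|<\kappa$,
\[
R_{1,\kappa}^{\cyl}(x') = \int_\R \frac{\ind_{\rho^2+t^2<\kappa^2}}{\sqrt{\rho^2+t^2}}\,dt = 2\int_0^{\sqrt{\kappa^2-\rho^2}}\frac{dt}{\sqrt{\rho^2+t^2}} = 2\argsh\lt(\frac{\sqrt{\kappa^2-\rho^2}}{\rho}\rt) = 2\ln\lt(\frac{\kappa+\sqrt{\kappa^2-\rho^2}}{\rho}\rt),
\]
while $R_{1,\kappa}^{\cyl}(x')=0$ for $|x'|\ge\kappa$; the last equality uses $\sqrt{\rho^2+(\kappa^2-\rho^2)}=\kappa$. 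Since $B_l^2$ is convex, I would then apply \cref{cor:rieszball} in dimension $n=2$ (with $|\Sp^1|=2\pi$ and $|\Sp^0|=2$) to write
\[
\I^2_{R_{1,\kappa}^{\cyl}}(B_l^2) = 2\pi\int_0^l S^2_{R_{1,\kappa}^{\cyl}}\lt(2\sqrt{l^2-r^2}\rt)\,dr,
\]
reducing the problem to the one-dimensional slice quantity $S^2_{R_{1,\kappa}^{\cyl}}$ of \cref{eq_defSG}. Because $l\le\kappa/2$, every slice length $L=2\sqrt{l^2-r^2}$ satisfies $L\le 2l\le\kappa$, so the truncation merely restricts the domain of integration, and using $\int_0^L\int_0^L f(|s-t|)\,ds\,dt=2\int_0^L(L-r)f(r)\,dr$ I would obtain
\[
S^2_{R_{1,\kappa}^{\cyl}}(L) = 2\int_0^L(L-r)\,r\,R_{1,\kappa}^{\cyl}(r)\,dr = 4\int_0^L(L-r)\,r\ln\lt(\frac{\kappa+\sqrt{\kappa^2-r^2}}{r}\rt)\,dr,\qquad L\le\kappa.
\]

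The heart of the computation — and the main obstacle — is evaluating this last integral with its logarithmic integrand. Rather than integrating directly, I would claim and verify the closed form $S^2_{R_{1,\kappa}^{\cyl}}(\kappa\ell)=\frac{\kappa^3}{2\pi}g(\ell)$ for $0\le\ell\le1$. After rescaling $r=\kappa\rho$, $L=\kappa\ell$, write the rescaled integral as $J(\ell)$; one checks $J'(\ell)=\int_0^\ell\rho K(\rho)\,d\rho$ and $J''(\ell)=\ell K(\ell)$ with $K(\rho)=\ln\frac{1+\sqrt{1-\rho^2}}{\rho}$. Differentiating $g$ twice and using the identity $\argth\sqrt{1-\ell^2}=\ln\frac{1+\sqrt{1-\ell^2}}{\ell}$ (which follows from $\argth x=\tfrac12\ln\frac{1+x}{1-x}$ together with $1-(1-\ell^2)=\ell^2$) shows that $\tfrac{1}{2\pi}g$ and $S^2_{R_{1,\kappa}^{\cyl}}(\kappa\,\cdot)/\kappa^3$ have the same second derivative; matching the values and first derivatives at $\ell=0$, all of which vanish, then yields the identity. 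I expect the two-step antidifferentiation to be the only delicate part, and this differentiate-to-verify shortcut sidesteps it entirely.

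Finally I would substitute the closed form back, change variables $r=l\tilde r$, and use $\lambda^{-1}=2l/\kappa$ to rewrite $2\sqrt{l^2-r^2}/\kappa=\lambda^{-1}\sqrt{1-\tilde r^2}$, giving
\[
\I^2_{R_{1,\kappa}^{\cyl}}(B_l^2) = 2\pi\int_0^1\frac{\kappa^3}{2\pi}\,g\lt(\lambda^{-1}\sqrt{1-\tilde r^2}\rt)\,l\,d\tilde r = \kappa^3 l\int_0^1 g\lt(\lambda^{-1}\sqrt{1-r^2}\rt)\,dr,
\]
which is the stated formula since $\kappa^3 l=2\kappa^2 l^2\lambda$ (the arguments $\lambda^{-1}\sqrt{1-r^2}\le 2l/\kappa\le1$ keep us in the domain of $g$). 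The energy/mass ratio then follows at once from the identity in \cref{cor:rieszcyl}, namely $\sigma^{3,\cyl}_G(l)=\frac{2}{l}+\frac{1}{|B_l^2|}\I^2_{G^{\cyl}}(B_l^2)$ with $|B_l^2|=\pi l^2$: the perimeter term is $\frac{2}{l}=\frac{4\lambda}{\kappa}$ and the interaction term is $\frac{1}{\pi l^2}\cdot 2\kappa^2 l^2\lambda\int_0^1 g=\frac{2\lambda\kappa^2}{\pi}\int_0^1 g$, which is exactly~\cref{fcyl_integral}.
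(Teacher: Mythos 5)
Your proof is correct, and its skeleton coincides with the paper's: make the planar kernel explicit (your $2\ln\bigl(\tfrac{\kappa+\sqrt{\kappa^2-\rho^2}}{\rho}\bigr)$ is the same function as the paper's $2\argth\bigl(\sqrt{1-(\rho/\kappa)^2}\bigr)$), apply \cref{cor:rieszball} in dimension two, obtain a closed form for $S^2_{R^{\cyl}_{1,\kappa}}(L)$ for $L\le\kappa$, rescale, and conclude via the formula of \cref{cor:rieszcyl}. The one genuine difference is how the closed form for $S^2_{R^{\cyl}_{1,\kappa}}$ is established. The paper computes it \emph{forwards}, by two successive integrations by parts (first $\int_0^t s\argth(\sqrt{1-s^2})\,ds$, then $\int_0^\ell t^2\argth(\sqrt{1-t^2})\,dt$, plus elementary $\arcsin$ integrals); you instead reduce to $S^2(L)=4\int_0^L(L-r)\,r\ln\bigl(\tfrac{\kappa+\sqrt{\kappa^2-r^2}}{r}\bigr)\,dr$, observe that this representation makes two derivatives in $L$ immediate, and \emph{verify} the stated identity $S^2(\kappa\ell)=\tfrac{\kappa^3}{2\pi}g(\ell)$ by matching second derivatives and the vanishing data at $\ell=0$. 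This differentiate-to-verify route is sound --- indeed $\bigl(\tfrac{1}{2\pi}g\bigr)''(\ell)=4\ell\argth\bigl(\sqrt{1-\ell^2}\bigr)=\bigl(S^2(\kappa\,\cdot)/\kappa^3\bigr)''(\ell)$ --- and it is shorter and less error-prone than the paper's antidifferentiation, though it only works because the answer $g$ is given in the statement; it could not have \emph{discovered} $g$. Two details deserve explicit care in a final write-up: (i) normalization --- with your $J(\ell)=\int_0^\ell(\ell-\rho)\rho K(\rho)\,d\rho$ one has $S^2(\kappa\,\cdot)/\kappa^3=4J$, so the matching condition is $\bigl(\tfrac{1}{2\pi}g\bigr)''=4\ell K(\ell)$ rather than $\ell K(\ell)$; your prose leaves this factor of $4$ implicit, and losing it would derail the verification; (ii) the vanishing of the values and first derivatives at $0$ is mildly delicate because $\argth\bigl(\sqrt{1-\ell^2}\bigr)\to+\infty$ as $\ell\downarrow 0$, so one should say explicitly that $\ell^3\argth\bigl(\sqrt{1-\ell^2}\bigr)=O\bigl(\ell^3\log(1/\ell)\bigr)$ and $\ell^2\argth\bigl(\sqrt{1-\ell^2}\bigr)=O\bigl(\ell^2\log(1/\ell)\bigr)$ both tend to $0$. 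With those two points made precise, your argument is a complete and valid alternative to the paper's computation.
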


\begin{proof}
For $x'\in\R^{n-1}\setminus\{0\}$, by the definition of $G^{\cyl}$ given in
\cref{cor:rieszcyl} and a change of variable we have
\[
R^{\cyl}_{\alpha,\kappa}(x')
= \int_{\R} R_{\alpha,\kappa}(x',t)\,dt
= \int_{\R} \frac{\ind_{(0,\kappa)}(\sqrt{|x'|^2+t^2})}{(|x'|^2+t^2)^{\alpha/2}}\,dt
= \frac{c_{\alpha,\kappa}(|x'|)}{|x'|^{\alpha-1}},
\]
where
\[
c_{\alpha,\kappa}(l)\coloneqq
\begin{cases}
\displaystyle 2\int_0^{\sqrt{(\kappa/l)^2-1}}\dfrac1{(1+t^2)^{\alpha/2}}\,dt&\qquad\text{ if
}\kappa> l,\\[12pt]
\displaystyle \qquad0&\qquad\text{ otherwise}.
\end{cases}
\]
Since $\alpha=1$, we get
\[
c_{1,\kappa}(l)
= 2\argsh\lt(\sqrt{(\kappa/l)^2-1}\rt) 
= 2\argth\lt(\sqrt{1-(l/\kappa)^2}\rt)
\]
whenever $l<\kappa$, thus
\[
c_{1,\kappa}(l)= 2\argth\lt(\sqrt{1-(l/\kappa)^2}\rt)\ind_{(0,\kappa)}(l).
\]
Let $R\leq\kappa/2$.
By \cref{cor:rieszball} we have
\[
\I^{n-1}_{R^{\cyl}_{1,\kappa}}(B_R)
= \dfrac1{2}|\Sp^{n-2}||\Sp^{n-3}|\int_0^R
S^{n-1}_{R^{\cyl}_{1,\kappa}}\big(2\sqrt{R^2-r^2}\big)r^{n-3}\,dr,
\]
where
\[
\begin{aligned}
S^{n-1}_{R^{\cyl}_{1,\kappa}}(L)
&= \int_0^L \int_0^L |s-t|^{n-2} R^{\cyl}_{1,\kappa}(s-t)\,ds\, dt\\
&= \int_0^L \int_0^L |s-t|^{n-2}c_{1,\kappa}(|s-t|)\,ds\, dt.
\end{aligned}
\]
Let us compute $S^{n-1}_{R^{\cyl}_{1,\kappa}}(L)$ for $n=3$.
Since $R\leq\kappa/2$, we have $2\sqrt{R^2-r^2}\leq\kappa$ for every $r\in[0,R]$, thus we are only
interested in computing $S^{n-1}_{R^{\cyl}_{1,\kappa}}(L)$ for $L\le\kappa$.
In that case, $|s-t|\le \kappa$, and we have
\[
\begin{aligned}
S^{2}_{R^{\cyl}_{1,\kappa}}(L)
&= 2\int_0^L \int_0^L
|s-t|\argth\lt(\sqrt{1-\lt(|s-t|/\kappa\rt)^2}\rt)\,ds\, dt\\
&= 2\int_0^L \int_{-t}^{L-t}
|u|\argth\lt(\sqrt{1-\lt(u/\kappa\rt)^2}\rt)\,du\,dt\\
&= 2\int_0^L \int_0^t u\argth\lt(\sqrt{1-\lt(u/\kappa\rt)^2}\rt)\,du\,dt\\
&\qquad\qquad+2\int_0^L \int_0^{L-t}
|u|\argth\lt(\sqrt{1-\lt(u/\kappa\rt)^2}\rt)\,du\,dt\\
&= 4\int_0^L \int_{0}^t u\argth\lt(\sqrt{1-\lt(u/\kappa\rt)^2}\rt)\,du\,dt\\
&= 4\kappa^3\int_0^{L/\kappa} \int_{0}^{t} s\argth\lt(\sqrt{1-s^2}\rt)\,ds\, dt.
\end{aligned}
\]
Since $\frac{d}{ds}\lt(\argth\lt(\sqrt{1-s^2}\rt)\rt)=\frac{-1}{s\sqrt{1-s^2}}$
and $\argth(\sqrt{1-t^2})\ \stackrel{t\dw0}\sim\ \frac{-1}{t}$, integrating by parts, we find
\be\label{riesztruncyl:eq1}
\begin{aligned}
\int_{0}^{t} s\argth\lt(\sqrt{1-s^2}\rt)\,ds
&=\lt[ \frac{s^2}{2}\argth(\sqrt{1-s^2})\rt]_0^{t}
+\dfrac1{2}\int_0^{t} \frac{s}{\sqrt{1-s^2}}\,ds\\
&=\frac{t^2}{2}\argth\lt(\sqrt{1-t^2}\rt)
+\dfrac1{2}\lt(1-\sqrt{1-t^2}\rt).
\end{aligned}
\ee
Thus
\[
\begin{aligned}
S^{2}_{R^{\cyl}_{1,\kappa}}(L)
&= 2\kappa^2\int_0^L
\lt(t/\kappa\rt)^2\argth\lt(\sqrt{1-\lt(t/\kappa\rt)^2}\rt)
+\lt(1-\sqrt{1-\lt(t/\kappa\rt)^2}\rt)\,dt\\
&= 2\kappa^3\int_0^{L/\kappa}
t^2\argth\lt(\sqrt{1-t^2}\rt)
+\lt(1-\sqrt{1-t^2}\rt)\,dt.
\end{aligned}
\]
We now set $\ell=L/\kappa$ to lighten notation.
Integrating by parts once again, we compute
\[
\begin{aligned}
\int_0^{\ell} t^2\argth\lt(\sqrt{1-t^2}\rt)
&
=\lt[\frac{t^3}{3}\argth\lt(\sqrt{1-t^2}\rt)\rt]_0^{\ell}
+\dfrac1{3}\int_0^{\ell} \frac{t^2}{\sqrt{1-t^2}}\,dt\\
&
=\frac{\ell^3}{3}\argth\lt(\sqrt{1-\ell^2}\rt)
+\dfrac1{3}\int_0^{\ell} \dfrac1{\sqrt{1-t^2}}-\sqrt{1-t^2}\,dt,
\end{aligned}
\]
thus
\[
S^{2}_{R^{\cyl}_{1,\kappa}}(L)
= \frac{2\kappa^3}{3}\ell^3\argth\lt(\sqrt{1-\ell^2}\rt)
+ 2\kappa^3\ell
+\frac{2\kappa^3}{3}\int_0^{\ell}\frac{dt}{\sqrt{1-t^2}}
-\frac{8\kappa^3}{3}\int_0^{\ell}\sqrt{1-t^2}\,dt.
\]
On the one hand, we have
\[
\int_0^{\ell} \frac{dt}{\sqrt{1-t^2}}
= \arcsin\lt(\ell\rt)
\]
and on the other hand

\begin{multline*}
\int_0^{\ell} \sqrt{1-t^2}
= \int_0^{\arcsin(\ell)} \cos^2\te\,d\te
 =\dfrac1{2}\int_0^{\arcsin(\ell)} 1+\cos(2\te)\,d\te\\
=\dfrac1{2}\arcsin(\ell)+\dfrac1{4}\arcsin(2\sin\ell)
=\dfrac1{2}\arcsin\lt(\ell\rt)+\dfrac1{2}\ell\sqrt{1-\ell^2}.
\end{multline*}
Hence, for $L\le\kappa$,
\[
S^{2}_{R^{\cyl}_{1,\kappa}}(L)
=\frac{2}{3}\kappa^3
\lt[\ell^3\argth\lt(\sqrt{1-\ell^2}\rt)
+\ell-\arcsin(\ell)+2\ell\lt(1-\sqrt{1-\ell^2}\rt)\rt].
\]
Recalling that
\[
\I^{2}_{R^{\cyl}_{1,\kappa}}(B'_R)
= 2\pi\int_0^R S^{2}_{R^{\cyl}_{1,\kappa}}\big(2\sqrt{R^2-r^2}\big)\,dr.
\]
setting
\[
g(\ell)= 
\frac{4\pi}{3}\lt[\ell-\arcsin(\ell)+2\ell\lt(1-\sqrt{1-\ell^2}\rt)
+\ell^3\argth\lt(\sqrt{1-\ell^2}\rt)\rt]
\qquad\forall \ell\leq 1,
\]
we find
\[
\I^{2}_{R^{\cyl}_{1,\kappa}}(B'_R)
= \kappa^3\int_0^R g\lt(2\sqrt{R^2-r^2}/\kappa\rt)\,dr.
\]
By a change of variable, we have
\[
\kappa^3 \int_0^{R} g\lt(2\sqrt{R^2-r^2}/\kappa\rt)\,dr
= \kappa^3 R \int_0^1 g\lt(\frac{2R}{\kappa}\sqrt{1-r^2}\rt)\,dr
= 2\kappa^2 R^2 \int_0^1 \lambda g\lt(\lambda^{-1}\sqrt{1-r^2}\rt)\,dr,
\]
thus, eventually,
\[
\I^2_{R^{\cyl}_{1,\kappa}}(B'_R)
= 2\kappa^2 R^2\lambda \int_0^{1} g\lt(\lambda^{-1}\sqrt{1-r^2}\rt)\,dr.
\]
Hence by \cref{cor:rieszcyl} we deduce
\[
\sigma_{R_{1,\kappa}}^{3,\cyl}(l)
=\frac{2}{l}+\dfrac1{|B'_l|}\I_{R_{1,\kappa}^{\cyl}}^{2}(B'_l)\\
=\frac{4\lambda}{\kappa}+\frac{2\lambda\kappa^2}{\pi}
\int_0^{1} g\lt(\lambda^{-1}\sqrt{1-r^2}\rt)\,dr,
\]
where $\lambda=\frac{\kappa}{2l}$. This concludes the proof.
\end{proof}

In the next lemma, we compute explicitly the truncated Coulomb energy of $B_{\kappa/2}$, which we
use later to compute the truncated Coulomb energy/mass of the infinite cylinder with radius $\kappa/2$.

\begin{lem}\label{lem_rieszcutratio1}
We have
\[
\I^2_{R_{1,\kappa}^{\cyl}}(B_{\kappa/2})
= \pi\kappa^4
\lt(\frac{\pi}{2}
-\frac{17}{12}
+\frac{C}{2}\rt)
\]
where $C$ is the Catalan number defined by~\cref{def_Catalan} below.
\end{lem}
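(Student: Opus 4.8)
The plan is to specialize \cref{prp_ecyltruncriesz} to the borderline radius $\kappa/2$, for which $\lambda=\kappa/(2l)=1$, and then to evaluate by hand the single one-dimensional integral that remains. Taking $R=\kappa/2$ (so $\lambda=1$ and $R^2=\kappa^2/4$) in the formula $\I^2_{R^{\cyl}_{1,\kappa}}(B'_R)=2\kappa^2R^2\lambda\int_0^1 g\lt(\lambda^{-1}\sqrt{1-r^2}\rt)\,dr$ of \cref{prp_ecyltruncriesz} gives $\I^2_{R^{\cyl}_{1,\kappa}}(B_{\kappa/2})=\tfrac{\kappa^4}{2}\int_0^1 g\lt(\sqrt{1-r^2}\rt)\,dr$, so everything reduces to computing $\int_0^1 g(\sqrt{1-r^2})\,dr$.

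First I would insert the explicit $g$ and simplify the argument: since $\sqrt{1-(\sqrt{1-r^2})^2}=\sqrt{r^2}=r$ for $r\in[0,1]$, one gets $g(\sqrt{1-r^2})=\tfrac{4\pi}{3}\lt[\sqrt{1-r^2}-\arcsin(\sqrt{1-r^2})+2\sqrt{1-r^2}(1-r)+(1-r^2)^{3/2}\argth(r)\rt]$. I would then split the integral into four pieces, of which three are elementary: $\int_0^1\sqrt{1-r^2}\,dr=\tfrac\pi4$; using $\arcsin(\sqrt{1-r^2})=\arccos r$ on $[0,1]$, $\int_0^1\arcsin(\sqrt{1-r^2})\,dr=\int_0^1\arccos r\,dr=1$; and $2\int_0^1\sqrt{1-r^2}(1-r)\,dr=\tfrac\pi2-\tfrac23$.

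The real work lies in the last piece, $I\coloneqq\int_0^1(1-r^2)^{3/2}\argth(r)\,dr$, and this is where I expect Catalan's constant to enter. I would integrate by parts using the antiderivative $\int\argth r\,dr=r\argth r+\tfrac12\ln(1-r^2)=\tfrac12(1+r)\ln(1+r)+\tfrac12(1-r)\ln(1-r)$ and differentiating the factor $(1-r^2)^{3/2}$; one checks that both boundary contributions vanish, since near $r=1$ the logarithmic antiderivative stays bounded (indeed it tends to $\ln 2$) while $(1-r^2)^{3/2}\to 0$, and at $r=0$ the antiderivative vanishes. This reduces $I$ to $\tfrac32\int_0^1 r\sqrt{1-r^2}\bigl[(1+r)\ln(1+r)+(1-r)\ln(1-r)\bigr]\,dr$, a combination of integrals of the type $\int_0^1 r^j\sqrt{1-r^2}\ln(1\pm r)\,dr$. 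These I would evaluate by a further integration by parts and a standard substitution (for instance $r=\sin\theta$, which turns $\argth(\sin\theta)$ into $\ln(\sec\theta+\tan\theta)$); the logarithmic contributions yield a rational part together with a multiple of the Catalan constant $C$ of \cref{def_Catalan}. I anticipate $I=\tfrac{3C}{4}-\tfrac{11}{24}$.

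Finally I would assemble the four pieces. With $\tfrac\pi4-1+\lt(\tfrac\pi2-\tfrac23\rt)+\lt(\tfrac{3C}{4}-\tfrac{11}{24}\rt)=\tfrac{3\pi}{4}-\tfrac{17}{8}+\tfrac{3C}{4}$, multiplying by the prefactor $\tfrac{\kappa^4}{2}\cdot\tfrac{4\pi}{3}=\tfrac{2\pi\kappa^4}{3}$ produces exactly $\pi\kappa^4\lt(\tfrac\pi2-\tfrac{17}{12}+\tfrac C2\rt)$, as claimed. The main obstacle is the evaluation of $I$: tracking the boundary terms through the repeated integration by parts and correctly isolating the Catalan-constant contribution from the otherwise rational logarithmic integrals is the delicate bookkeeping, while the remaining steps are routine.
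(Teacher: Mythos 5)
Your reduction is the same as the paper's: specialize \cref{prp_ecyltruncriesz} at $\lambda=1$ to get $\I^2_{R^{\cyl}_{1,\kappa}}(B_{\kappa/2})=\tfrac{\kappa^4}{2}\int_0^1 g(\sqrt{1-r^2})\,dr$, split $g$ into four terms, and evaluate. Your three elementary integrals ($\tfrac\pi4$, $1$, $\tfrac\pi2-\tfrac23$) are correct and match the paper, your integration by parts for $I\coloneqq\int_0^1(1-r^2)^{3/2}\argth(r)\,dr$ is legitimate (the boundary terms do vanish, since the antiderivative of $\argth$ stays bounded at $r=1$ while $(1-r^2)^{3/2}\to0$), and your final assembly arithmetic, conditional on $I=\tfrac{3C}{4}-\tfrac{11}{24}$, is exact.

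However, there is a genuine gap at precisely the step that makes this lemma nontrivial: you never actually evaluate $I$. You reduce it to integrals of the form $\int_0^1 r^j\sqrt{1-r^2}\ln(1\pm r)\,dr$ and then assert that these ``yield a rational part together with a multiple of the Catalan constant,'' anticipating $I=\tfrac{3C}{4}-\tfrac{11}{24}$. The anticipated value is correct, but Catalan's constant does not appear by fiat; it must be produced by a specific identity, and no such identity is stated or proved in your proposal. Concretely, the $j=1$ integrals are elementary (they give only rational and $\pi$-rational contributions: $\int_0^1 r\sqrt{1-r^2}\ln(1\pm r)\,dr=\pm\tfrac{\pi}{12}-\tfrac19$), so the entire Catalan content sits in the $j=2$ integrals, which after a further integration by parts lead to $\int_0^1\frac{\arcsin r}{1\pm r}\,dr$; extracting $C$ from these requires something like $\int_0^1\frac{\log t}{1+t^2}\,dt=-C$, equivalently $\int_0^{\pi/2}\log(1+\sin\theta)\,d\theta=2C-\tfrac{\pi\log 2}{2}$. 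This is exactly the content the paper isolates in \cref{lem_catalan} and then exploits in \cref{cor:catalan} (via the substitution $r=\sin\theta$ and the expansion $\cos^4\te=\tfrac38+\tfrac12\cos(2\te)+\tfrac18\cos(4\te)$) before it can conclude $I=\tfrac{3C}{4}-\tfrac{11}{24}$. Your sketched route is viable and would funnel through the same key identity, but as written the proof of the lemma's central computation is missing, not merely abbreviated.
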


\begin{proof}
Recall that for $R\leq\kappa/2$, we have
\[
\I^2_{R^{\cyl}_{1,\kappa}}(B_R')
= 2\kappa^2 R^2\lambda \int_0^{\min(1,\lambda)} g\lt(\lambda^{-1}\sqrt{1-r^2}\rt)\,dr.
\]
where $\lambda=\kappa/(2R)$, and the function $g$ is given by
\cref{prp_ecyltruncriesz}.
Here since $R=\kappa/2$, we have $\lambda=1$ and thus
\[
\I^2_{R_{1,\kappa}^{\cyl}}(B'_{\kappa/2})
= \frac{\kappa^4}{2} \int_0^1 g\lt(\sqrt{1-r^2}\rt)\,dr.
\]
We compute the integral of each term composing $g(\sqrt{1-r^2})$. We have
\[
\int_0^1 \sqrt{1-r^2}\,dr
= \int_0^{\pi/2} \cos^2\te\,d\te
= \frac{\pi}{4},
\]
\[
\int_0^1 \arcsin(\sqrt{1-r^2})\,dr
= \int_0^{\pi/2} \sin\te\arcsin(\sin\te)\,d\te
= \int_0^{\pi/2} \te\sin\te\,d\te
=1,
\]
then, since $1-\sqrt{1-(\sqrt{1-r^2})^2}=1-r$,
\[
\int_0^1 (1-r)\sqrt{1-r^2}\,dr
=\int_0^{\pi/2} (1-\sin\te)\cos^2\te\,d\te
= \frac{\pi}{4}-\dfrac1{3}.
\]
As for the last term, since $\argth(r) = \dfrac1{2}\log\lt(\frac{1+r}{1-r}\rt)$, we have
\[
\begin{aligned}
\int_0^1 (1-r^2)^{3/2}\argth(r)\,dr
&= \dfrac1{2}\int_0^{\pi/2} \cos^4\te
\log\lt(\frac{1+\sin\te}{1-\sin\te}\rt)\,d\te\\
&= \dfrac1{2}\lt(\int_0^{\pi/2} \cos^4\te \log\lt(1+\sin\te\rt)\,d\te
-\int_0^{\pi/2} \cos^4\te \log\lt(1-\sin\te\rt)\,d\te\rt)\\
&= \dfrac1{2}\lt(\int_0^{\pi/2} \cos^4\te \log\lt(1+\sin\te\rt)\,d\te
-\int_{-\pi/2}^{0} \cos^4\te \log\lt(1+\sin\te\rt)\,d\te\rt)\\
&= \frac{3C}{4}-\frac{11}{24},
\end{aligned}
\]
where we used \cref{cor:catalan} below for the last equality, and where $C$ denotes the Catalan
constant defined by~\cref{def_Catalan} below.
Hence, we obtain 
\[
\I^2_{R_{1,\kappa}^{\cyl}}(B'_{\kappa/2})
= \pi\kappa^4
\lt(\frac{\pi}{2}
-\frac{17}{12}
+\frac{C}{2}\rt)
\]
which concludes the computation.
\end{proof}

The following result was used in the proof of \cref{lem_rieszcutratio1}.

\begin{lem}\label{cor:catalan}
Let us denote the Catalan constant 
\be\label{def_Catalan}
C = \sum_{n=0}^{+\oo} \frac{(-1)^n}{(2n+1)^2}.
\ee
We have
\[
\int_0^{\pi/2} \cos^4\te \log\lt(1+\sin\te\rt)\,d\te
= \pi\lt(\frac{7}{64}-\frac{3\log 2}{16}\rt)-\frac{11}{24} +\frac{3C}{4}
\]
and
\[
\int_{-\pi/2}^{0} \cos^4\te \log\lt(1+\sin\te\rt)\,d\te
= \pi\lt(\frac{7}{64}-\frac{3\log 2}{16}\rt)+\frac{11}{24}-\frac{3C}{4}.
\]
\end{lem}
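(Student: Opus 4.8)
The plan is to compute the two integrals simultaneously by passing to their sum and difference: the sum will collect the elementary $\pi$-terms, while the difference produces the Catalan constant. Write $I_+=\int_0^{\pi/2}\cos^4\te\,\log(1+\sin\te)\,d\te$ and $I_-=\int_{-\pi/2}^0\cos^4\te\,\log(1+\sin\te)\,d\te$. First I would substitute $\te\mapsto-\te$ in $I_-$ to get $I_-=\int_0^{\pi/2}\cos^4\te\,\log(1-\sin\te)\,d\te$, so that
\[
I_++I_-=\int_0^{\pi/2}\cos^4\te\,\log(1-\sin^2\te)\,d\te=2\int_0^{\pi/2}\cos^4\te\,\log\cos\te\,d\te.
\]
Expanding $\cos^4\te=\tfrac38+\tfrac12\cos2\te+\tfrac18\cos4\te$ and invoking the classical values $\int_0^{\pi/2}\log\cos\te\,d\te=-\tfrac{\pi}{2}\log2$ and $\int_0^{\pi/2}\cos(2k\te)\log\cos\te\,d\te=\tfrac{(-1)^{k+1}\pi}{4k}$ for $k\ge1$ (both immediate from the Fourier series of $\log\cos\te$), I obtain $\int_0^{\pi/2}\cos^4\te\,\log\cos\te\,d\te=\tfrac{7\pi}{64}-\tfrac{3\pi\log2}{16}$, hence $I_++I_-=2\pi\lt(\tfrac{7}{64}-\tfrac{3\log2}{16}\rt)$.

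For the difference, the same substitution gives
\[
I_+-I_-=\int_0^{\pi/2}\cos^4\te\,\log\frac{1+\sin\te}{1-\sin\te}\,d\te=2\int_0^{\pi/2}\cos^4\te\,\argth(\sin\te)\,d\te,
\]
and the change of variable $r=\sin\te$ turns this into $2\int_0^1(1-r^2)^{3/2}\argth(r)\,dr$. I would integrate by parts with $u=\argth(r)$ and $v$ the antiderivative of $(1-r^2)^{3/2}$ normalized to vanish at $r=1$, namely $v(r)=\tfrac{r(1-r^2)^{3/2}}{4}+\tfrac{3r\sqrt{1-r^2}}{8}+\tfrac38\lt(\arcsin r-\tfrac\pi2\rt)$ (one checks $v'=(1-r^2)^{3/2}$). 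Since $v(r)$ decays like $\sqrt{1-r}$ near $r=1$ while $\argth(r)$ only diverges logarithmically, and $u(0)=0$, the boundary terms vanish at both endpoints, leaving $-\int_0^1 v(r)/(1-r^2)\,dr$. The first two summands integrate elementarily to $-\tfrac1{12}$ and $-\tfrac38$, while the last contributes $-\tfrac38\int_0^1\frac{\arcsin r-\pi/2}{1-r^2}\,dr=\tfrac38\int_0^1\frac{\arccos r}{1-r^2}\,dr$.

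The decisive step is to identify this surviving integral with Catalan's constant. Through $r=\cos\phi$ it equals $\int_0^{\pi/2}\frac{\phi}{\sin\phi}\,d\phi$, and the key fact is the classical evaluation $\int_0^{\pi/2}\frac{\phi}{\sin\phi}\,d\phi=2C$, which itself follows by one integration by parts from $\int_0^{\pi/4}\log\tan u\,du=-C$. This yields $\int_0^1(1-r^2)^{3/2}\argth(r)\,dr=\tfrac{3C}{4}-\tfrac{11}{24}$, so $I_+-I_-=\tfrac{3C}{2}-\tfrac{11}{12}$. Combining with the value of $I_++I_-$ and solving the resulting linear system gives precisely the two claimed identities. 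The hard part will be the careful handling of the boundary term in the integration by parts, since $\argth(r)$ is singular at $r=1$, together with the recognition that the leftover integral is exactly $2C$; every other step is routine.
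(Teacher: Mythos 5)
Your proposal is correct, and all the numerical pieces check out: the antiderivative $v$ indeed satisfies $v'=(1-r^2)^{3/2}$ with $v(1)=0$, the boundary terms in your integration by parts vanish (in fact $v(r)=O((1-r)^{3/2})$ near $r=1$, which beats the logarithmic blow-up of $\argth$), and the final linear system reproduces both stated identities. However, your route is genuinely different from the paper's. The paper keeps the two integrals separate, expands $\cos^4\te=\tfrac38+\tfrac12\cos(2\te)+\tfrac18\cos(4\te)$, finds explicit primitives of $\cos(2\te)\log(1+\sin\te)$ and $\cos(4\te)\log(1+\sin\te)$ by parts, and feeds in a separate lemma evaluating $\int_0^{\pi/2}\log(1+\sin\te)\,d\te=2C-\tfrac{\pi\log 2}{2}$ (itself proved via the tangent half-angle substitution and the series $\int_0^1\frac{\log t}{1+t^2}\,dt=-C$). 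You instead symmetrize: the sum $I_++I_-$ collapses to $2\int_0^{\pi/2}\cos^4\te\log\cos\te\,d\te$, handled by the Fourier series of $\log\cos\te$, while the difference becomes $2\int_0^1(1-r^2)^{3/2}\argth(r)\,dr$, which you evaluate directly using the classical identity $\int_0^{\pi/2}\frac{\phi}{\sin\phi}\,d\phi=2C$. Two remarks on what each approach buys. First, your method is shorter but outsources more: it quotes the Fourier coefficients $\int_0^{\pi/2}\cos(2k\te)\log\cos\te\,d\te=\frac{(-1)^{k+1}\pi}{4k}$ and the representation $\int_0^{\pi/4}\log\tan u\,du=-C$, whereas the paper derives its special values from scratch; a fully self-contained version of your argument would need short proofs of these, of about the same length as the paper's auxiliary lemma. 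Second, and more interestingly, the quantity you compute en route, $\int_0^1(1-r^2)^{3/2}\argth(r)\,dr=\tfrac{3C}{4}-\tfrac{11}{24}$, is exactly what the paper needs downstream (in the computation of $\I^2_{R_{1,\kappa}^{\cyl}}(B_{\kappa/2})$, where this lemma is invoked precisely to evaluate that integral); so your argument would actually let one bypass the present lemma entirely and plug the value in directly, streamlining that section.
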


We first prove the following simpler identities:

\begin{lem}\label{lem_catalan}
We have
\begin{equation}\label{catalan:res1}
\int_{-\pi/2}^{\pi/2} \log(1+\sin\theta)\,d\theta
= -\pi\log 2
\end{equation}
and
\begin{equation}\label{catalan:res2}
\int_{0}^{\pi/2} \log(1+\sin\theta)\,d\theta
=2C-\frac{\pi\log 2}{2},
\end{equation}
where $C$ is given by \cref{def_Catalan}.
\end{lem}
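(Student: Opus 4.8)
The plan is to treat the two identities separately: the first follows from a parity argument and a classical log-trig integral, while the second is obtained by a half-angle substitution that forces Catalan's constant to appear.

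For \cref{catalan:res1}, I would exploit the oddness of $\sin$. Splitting $\int_{-\pi/2}^{\pi/2}$ at $0$ and applying the change of variable $\theta\mapsto-\theta$ on the negative half turns the integral into
\[
\int_0^{\pi/2}\lt[\log(1+\sin\theta)+\log(1-\sin\theta)\rt]d\theta
=\int_0^{\pi/2}\log(\cos^2\theta)\,d\theta
=2\int_0^{\pi/2}\log(\cos\theta)\,d\theta.
\]
The classical value $\int_0^{\pi/2}\log(\cos\theta)\,d\theta=-\tfrac{\pi}{2}\log 2$ then yields $-\pi\log 2$ at once.

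For \cref{catalan:res2}, the key identity is $1+\sin\theta=2\cos^2\!\lt(\tfrac{\pi}{4}-\tfrac{\theta}{2}\rt)$, so that $\log(1+\sin\theta)=\log 2+2\log\cos\!\lt(\tfrac{\pi}{4}-\tfrac{\theta}{2}\rt)$. Integrating over $[0,\pi/2]$ and substituting $u=\tfrac{\pi}{4}-\tfrac{\theta}{2}$ reduces the problem to
\[
\int_0^{\pi/2}\log(1+\sin\theta)\,d\theta=\frac{\pi}{2}\log 2+4\int_0^{\pi/4}\log(\cos u)\,du.
\]
It then remains to evaluate $\int_0^{\pi/4}\log(\cos u)\,du$. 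I would pin it down by combining two relations: the sum $\int_0^{\pi/4}[\log\sin x+\log\cos x]\,dx=\int_0^{\pi/4}\log(\tfrac12\sin 2x)\,dx=-\tfrac{\pi}{2}\log 2$ (via $y=2x$ and $\int_0^{\pi/2}\log\sin=-\tfrac{\pi}{2}\log 2$), and the difference $\int_0^{\pi/4}[\log\cos x-\log\sin x]\,dx=-\int_0^{\pi/4}\log(\tan x)\,dx$. Solving the resulting $2\times2$ linear system gives $\int_0^{\pi/4}\log(\cos u)\,du=\tfrac{C}{2}-\tfrac{\pi}{4}\log 2$, and substituting back produces $\tfrac{\pi}{2}\log 2+4\lt(\tfrac{C}{2}-\tfrac{\pi}{4}\log 2\rt)=2C-\tfrac{\pi}{2}\log 2$, as claimed.

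The only genuinely nontrivial step—the \textbf{main obstacle}—is establishing $\int_0^{\pi/4}\log(\tan x)\,dx=-C$ directly from the series definition \cref{def_Catalan}. Via $u=\tan x$ this integral becomes $\int_0^1\frac{\log u}{1+u^2}\,du$; expanding $\frac{1}{1+u^2}=\sum_{n\ge0}(-1)^n u^{2n}$ and using $\int_0^1 u^{2n}\log u\,du=-\frac{1}{(2n+1)^2}$, term-by-term integration (justified on $[0,1-\eps]$ with a standard dominated/monotone passage to the limit) yields $-\sum_{n\ge0}\frac{(-1)^n}{(2n+1)^2}=-C$. Everything else is elementary trigonometric manipulation. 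As an alternative for \cref{catalan:res2} that reuses \cref{catalan:res1}, one may set $A=\int_0^{\pi/2}\log(1+\sin\theta)\,d\theta$ and $B=\int_0^{\pi/2}\log(1-\sin\theta)\,d\theta$: then \cref{catalan:res1} gives $A+B=-\pi\log 2$, while $A-B=2\int_0^{\pi/2}\log\tan\!\lt(\tfrac{\pi}{4}+\tfrac{\theta}{2}\rt)d\theta=4\int_{\pi/4}^{\pi/2}\log\tan\phi\,d\phi=4C$, so $A=2C-\tfrac{\pi}{2}\log 2$; this variant still relies on the same Catalan representation.
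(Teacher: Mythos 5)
Your proof is correct, and for the second identity it takes a genuinely different route from the paper's. For \cref{catalan:res1} the two arguments coincide: symmetrize with $\log(1-\sin\theta)$ and invoke $\int_0^{\pi/2}\log(\cos\theta)\,d\theta=-\tfrac{\pi}{2}\log 2$ (the paper derives this classical value inline by the sine-doubling trick; you quote it, which is harmless). For \cref{catalan:res2}, however, you reduce via the half-angle identity $1+\sin\theta=2\cos^2\lt(\tfrac{\pi}{4}-\tfrac{\theta}{2}\rt)$ to the log-cosine integral $\int_0^{\pi/4}\log(\cos u)\,du$, which you pin down through a sum/difference linear system whose only nontrivial input is $\int_0^{\pi/4}\log(\tan x)\,dx=-C$. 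The paper instead replaces $\sin$ by $\cos$, applies the Weierstrass substitution $t=\tan(\theta/2)$ to land on $\int_0^1\frac{\log(1+t^2)}{1+t^2}\,dt$, and untangles that integral with the inversion $t\mapsto 1/t$, producing $\int_0^\infty\frac{\log(1+t^2)}{1+t^2}\,dt=\pi\log 2$ plus the term $\int_0^1\frac{\log t}{1+t^2}\,dt$. Both proofs ultimately rest on the same Catalan representation $\int_0^1\frac{\log u}{1+u^2}\,du=-C$, obtained by expanding $\frac{1}{1+u^2}$ as an alternating series and integrating term by term against $\log u$; your justification of this exchange (truncating at $1-\eps$, or equivalently bounding the remainder by $\int_0^1 u^{2N}|\log u|\,du=(2N+1)^{-2}$) is, if anything, more explicit than the paper's. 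What your route buys is the avoidance of the inversion bookkeeping: the half-angle reduction is a one-line identity and the $2\times 2$ system is mechanical. Your alternative variant, computing $A\pm B$ with $A-B=4\int_{\pi/4}^{\pi/2}\log(\tan\phi)\,d\phi=4C$, is also valid and arguably the slickest of the three, since it reuses \cref{catalan:res1} directly and isolates the Catalan input in a single step.
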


\begin{proof}
We compute
\begin{equation}\label{catalan:eq1}
\begin{aligned}
\int_{-\pi/2}^{\pi/2} \log(1+\sin\theta)\,d\theta
&= \frac{1}{2}\int_{-\pi/2}^{\pi/2} \log(1+\sin\theta)\,d\theta
+ \frac{1}{2}\int_{-\pi/2}^{\pi/2} \log(1-\sin\theta)\,d\theta\\
&= \frac{1}{2}\int_{-\pi/2}^{\pi/2} \log(1-\sin^2\theta)\,d\theta\\
&= 2\int_{0}^{\pi/2} \log(\cos\theta)\,d\theta.
\end{aligned}
\end{equation}
Notice that
\[
\begin{aligned}
\int_{0}^{\pi/2} \log(\cos\theta)\,d\theta
&=\frac{1}{2}\int_{0}^{\pi/2} \log(\sin\theta)\,d\theta
+\frac{1}{2}\int_{0}^{\pi/2} \log(\cos\theta)\,d\theta\\
&=\frac{1}{2}\int_{0}^{\pi/2} \log(\sin\theta\cos\theta)\,d\theta\\
&=\frac{1}{2}\int_{0}^{\pi/2} \log(\sin(2\theta)/2)\,d\theta\\
&=\frac{1}{2}\int_{0}^{\pi/2} \log(\sin(2\theta))\,d\theta-\frac{\pi\log 2}{4}\\
&=\frac{1}{4}\int_{0}^{\pi} \log(\sin\theta)\,d\theta-\frac{\pi\log 2}{4}\\
&=\frac{1}{2}\int_{0}^{\pi/2} \log(\cos\theta)\,d\theta-\frac{\pi\log 2}{4},
\end{aligned}
\]
thus
\begin{equation}\label{catalan:eq2}
\int_{0}^{\pi/2} \log(\cos\theta)\,d\theta = -\frac{\pi\log2}{2}.
\end{equation}
Inserting this into \cref{catalan:eq1} gives \cref{catalan:res1}.
Next, making the change of variable $t=\tan(\theta/2)$ we compute
\begin{multline}\label{catalan:eq3}
\int_{0}^{\pi/2} \log(1+\sin\theta)\,d\theta
=\int_{0}^{\pi/2} \log(1+\cos\theta)\,d\theta\\
=2\int_{0}^{1} \frac{1}{1+t^2}\log\left(\frac{2}{1+t^2}\right)\,dt
=\frac{\pi\log 2}{2}-2\int_{0}^{1} \frac{\log(1+t^2)}{1+t^2}\,dt.
\end{multline}
Making the change of variable $s=1/t$ we find
\[
\int_{0}^{1} \frac{\log(1+t^2)}{1+t^2}\,dt
=\int_1^\infty \frac{\log(1+s^{2})}{1+s^2} \,ds
-2\int_1^\infty \frac{\log s}{1+s^2} \,ds.
\]
Adding $\int_0^1 \frac{\log(1+t^2)}{1+t^2}\,dt$ to both sides of the previous inequality yields
\begin{equation}\label{catalan:eq4}
2\int_{0}^{1} \frac{\log(1+t^2)}{1+t^2}\,dt
=\int_0^\infty \frac{\log(1+t^{2})}{1+t^2} \,dt
-2\int_1^\infty \frac{\log t}{1+t^2} \,dt.
\end{equation}
Using the change of variable $t=\tan\theta$ we find
\begin{equation}\label{catalan:eq5}
\int_0^\infty \frac{\log(1+t^{2})}{1+t^2} \,dt
=-2\int_0^{\pi/2} \log(\cos\theta)\,d\theta
=\pi\log 2,
\end{equation}
where we used \cref{catalan:eq2} for the last equality.
In addition, by the change of variable $t\rightsquigarrow 1/t$ once again, notice that
\begin{equation}\label{catalan:eq6}
\int_1^\infty \frac{\log t}{1+t^2} \,dt
=-\int_0^1 \frac{\log t}{1+t^2} \,dt.
\end{equation}
Gathering \cref{catalan:eq3}, \cref{catalan:eq4}, \cref{catalan:eq5} and \cref{catalan:eq6}we obtain
\[
\int_{0}^{\pi/2} \log(1+\sin\theta)\,d\theta
=-\frac{\pi\log 2}{2}-2\int_0^1 \frac{\log t}{1+t^2}\,dt.
\]
Eventually, we compute
\[
\int_0^1 \frac{\log t}{1+t^2}\,dt
= \sum_{k\geq 0} (-1)^k\int_0^1 t^{2k}\log t\,dt
= \sum_{k\geq 0} \frac{(-1)^{k+1}}{(2k+1)^2}
= -C
\]
from which we deduce \cref{catalan:res2}. This concludes the proof.
\end{proof}

We are now ready to prove \cref{cor:catalan}.

\begin{proof}[Proof of \cref{cor:catalan}]
We recall
\[
\cos^4\te = \frac{3}{8}+\dfrac1{2}\cos(2\te)+\dfrac1{8}\cos(4\te).
\]
Integrating by parts, we compute a primitive of $\te\mapsto \cos(2\te)\log(1+\sin\te)$:
\[
\begin{aligned}
&\int \cos(2\te)\log(1+\sin\te)\,d\te\\
&\qquad= \lt[\dfrac1{2}\sin(2\te)\log(1+\sin\te)\rt]
-\dfrac1{2}\int \frac{\sin(2\te)\cos\te}{1+\sin\te}\,d\te\\
&\qquad= \lt[\dfrac1{2}\sin(2\te)\log(1+\sin\te)\rt]
-\int (\sin\te-\sin^2\te)  \,d\te\\
&\qquad= \lt[\dfrac1{2}\sin(2\te)\log(1+\sin\te)
+\cos\te+\frac{\te}{2}-\dfrac1{4}\sin(2\te)\rt].
\end{aligned}
\]
Thus
\[
\int_0^{\pi/2} \cos(2\te)\log(1+\sin\te)\,d\te
=\frac{\pi}{4}-1
\]
and
\[
\int_{-\pi/2}^0 \cos(2\te)\log(1+\sin\te)\,d\te
=1+\frac{\pi}{4}.
\]
We also compute
\[
\begin{aligned}
&\int \cos(4\te)\log(1+\sin\te)\,d\te\\
&\qquad=\lt[\dfrac1{4}\sin(4\te)\log(1+\sin\te)\rt]
-\dfrac1{4}\int\frac{\sin(4\te)\cos\te}{1+\sin\te}\,d\te\\
&\qquad=\lt[\dfrac1{4}\sin(4\te)\log(1+\sin\te)\rt]
-\int\frac{\sin\te\cos^2\te\cos(2\te)}{1+\sin\te}\,d\te\\
&\qquad=\lt[\dfrac1{4}\sin(4\te)\log(1+\sin\te)\rt]
-\int(\sin\te-\sin^2\te)(2\cos^2\te-1)\,d\te\\
&\qquad=\lt[\dfrac1{4}\sin(4\te)\log(1+\sin\te)
+\frac{2}{3}\cos^3\te-\cos\te\rt]
+\int 2\sin^2\te\cos^2\te-\sin^2\te\,d\te\\
&\qquad=\lt[\dfrac1{4}\sin(4\te)\log(1+\sin\te)
+\frac{2}{3}\cos^3\te-\cos\te
-\frac{\te}{4}+\dfrac1{4}\sin(2\te)
-\dfrac1{16}\sin(4\te)\rt].
\end{aligned}
\]
Thus
\[
\int_0^{\pi/2} \cos(4\te)\log(1+\sin\te)\,d\te
=\dfrac1{3}-\frac{\pi}{8}
\]
and
\[
\int_{-\pi/2}^0 \cos(4\te)\log(1+\sin\te)\,d\te
=-\dfrac1{3}-\frac{\pi}{8}.
\]
Using \cref{catalan:res1,catalan:res2}, substituting the above identity and simplifying, we obtain
\[
\begin{aligned}
\int_0^{\pi/2} \cos^4\theta\log(1+\sin\te)\,d\te
&= \frac{3}{8}\lt(2C-\frac{\pi\log 2}{2}\rt)
+\dfrac1{2}\lt(\frac{\pi}{4}-1\rt)
+\dfrac1{8}\lt(\dfrac1{3}-\frac{\pi}{8}\rt)\\
&= \pi\lt(\frac{7}{64}-\frac{3\log 2}{16}\rt)-\frac{11}{24} +\frac{3C}{4}
\end{aligned}
\]
and
\[
\begin{aligned}
\int_{-\pi/2}^{0} \cos^4\theta\log(1+\sin\te)\,d\te
&= \frac{3}{8}\lt(-2C-\frac{\pi\log 2}{2}\rt)
+\dfrac1{2}\lt(1+\frac{\pi}{4}\rt)
+\dfrac1{8}\lt(-\dfrac1{3}-\frac{\pi}{8}\rt)\\
&= \pi\lt(\frac{7}{64}-\frac{3\log 2}{16}\rt)+\frac{11}{24}-\frac{3C}{4}.
\end{aligned}
\]
This proves the lemma.
\end{proof}

\subsection{Energy/mass ratio of balls versus infinite cylinders}

\begin{prp}
Let $n=3$, $\alpha=1$ and $\kappa=11/10$, then we have
\[
\sigma_{R_{\alpha,\kappa}}^{3,\cyl}(\kappa/2) < 
\rho^{3,\ball}_{R_{\alpha,\kappa}}.
\]
\end{prp}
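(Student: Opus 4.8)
The plan is to turn both sides into explicit closed-form numbers—one depending only on $\pi$, the other on $\pi$ and the Catalan constant $C$—by invoking the formulas already established, and then to verify the resulting numerical inequality, which is tight. Note that once this is done we immediately obtain $\rho^{3,\cyl}_{R_{1,\kappa}}\le\sigma^{3,\cyl}_{R_{1,\kappa}}(\kappa/2)<\rho^{3,\ball}_{R_{1,\kappa}}$, which is the truncated Coulomb case of \cref{mainprp_prp1}.

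First I would pin down the regime for the ball ratio. Here $n=3$ and $\alpha=1$ give $\beta=3$, so $\kappa_{\min}=(3/\pi)^{1/3}$ and $\kappa_{\max}=(15/(2\pi))^{1/3}$. Since $\kappa^3=1331/1000$, the inequalities $3<1331\pi/1000$ and $1331\pi/500<15$ hold with ample margin, so $\kappa_{\min}<11/10<\kappa_{\max}$ and we are in the middle case of \cref{prp_eballriesztrunc}. Therefore
\[
\rho^{3,\ball}_{R_{1,\kappa}}=f_{1,\kappa}(\lambda_*),\qquad \lambda_*=\lt(\frac53-\frac{5}{\pi\kappa^3}\rt)^{1/2},\qquad f_{1,\kappa}(\lambda)=\frac{6\lambda}{\kappa}+6\pi\kappa^2\lt(\frac{\lambda^3}{15}-\frac{\lambda}{3}+\frac13\rt),
\]
and substituting $\kappa=11/10$ produces a concrete real number depending only on $\pi$ (numerically $\approx 6.6199$).

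Second, for the cylinder ratio at $l=\kappa/2$ I would use that this corresponds to $\lambda=\kappa/(2l)=1$, so that \cref{lem_rieszcutratio1} applies and gives $\I^2_{R^{\cyl}_{1,\kappa}}(B'_{\kappa/2})=\pi\kappa^4(\pi/2-17/12+C/2)$. Combining this with the identity $\sigma^{3,\cyl}_{R_{1,\kappa}}(l)=2/l+\I^2_{R^{\cyl}_{1,\kappa}}(B'_l)/|B'_l|$ from \cref{cor:rieszcyl} and $|B'_{\kappa/2}|=\pi\kappa^2/4$ yields the closed form
\[
\sigma^{3,\cyl}_{R_{1,\kappa}}(\kappa/2)=\frac{4}{\kappa}+4\kappa^2\lt(\frac{\pi}{2}-\frac{17}{12}+\frac{C}{2}\rt),
\]
which at $\kappa=11/10$ equals $\frac{40}{11}+\frac{121}{25}\lt(\frac{\pi}{2}-\frac{17}{12}+\frac{C}{2}\rt)\approx 6.5990$.

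The last step is the comparison $6.5990<6.6199$, and this is precisely where the difficulty lies: the two ratios differ by only about $0.021$ (roughly $0.3\%$), so no crude bound suffices. To make the inequality rigorous one must propagate explicit rational enclosures of $\pi$ and $C$ through the two formulas—most delicately through the nonlinear dependence of $\lambda_*$, and hence of $f_{1,\kappa}(\lambda_*)$, on $\pi$—and check that the enclosure of $\rho^{3,\ball}_{R_{1,\kappa}}$ lies strictly above that of $\sigma^{3,\cyl}_{R_{1,\kappa}}(\kappa/2)$. This final verification is what the accompanying symbolic/interval computation carries out.
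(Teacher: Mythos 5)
Your proposal is correct and follows exactly the paper's own proof: you verify $\kappa_{\min}<11/10<\kappa_{\max}$ so that \cref{prp_eballriesztrunc} gives $\rho^{3,\ball}_{R_{1,\kappa}}=f_{1,\kappa}(\lambda_{*,1,\kappa})$, you use \cref{prp_ecyltruncriesz} together with \cref{lem_rieszcutratio1} to obtain the closed form $\sigma^{3,\cyl}_{R_{1,\kappa}}(\kappa/2)=\frac{4}{\kappa}+4\kappa^2\lt(\frac{\pi}{2}-\frac{17}{12}+\frac{C}{2}\rt)$, and you conclude by a rigorous (interval/symbolic) numerical comparison, exactly as the paper does with SymPy.

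One substantive remark: your numerical values, not the paper's printed ones, are the correct ones. The paper states $\rho^{3,\ball}_{R_{1,\kappa}}\in(6.79,6.81)$, but this is impossible: since $R_{1,\kappa}\le R_1$ pointwise, every ball satisfies $\cE_{R_{1,\kappa}}(B_R)\le\cE_{R_1}(B_R)$, hence
\[
\rho^{3,\ball}_{R_{1,\kappa}}\ \le\ \rho^{3,\ball}_{R_{1}}\ =\ \frac{9}{\kappa_{\max}}\ =\ \frac{9}{2}\lt(\frac{16\pi}{15}\rt)^{1/3}\ \approx\ 6.734,
\]
which is also what case (b) of the proof of \cref{prp_eballriesztrunc} asserts. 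Direct evaluation of $f_{1,\kappa}(\lambda_{*,1,\kappa})$ at $\kappa=11/10$ indeed gives $\approx 6.6199$, matching your figure, against $\sigma^{3,\cyl}_{R_{1,\kappa}}(\kappa/2)\approx 6.5990$. So the inequality holds, but with the tight margin $\approx 0.021$ (roughly $0.3\%$) that you describe, not the comfortable $\sim 3\%$ suggested by the paper's enclosures; your insistence on propagating rigorous enclosures of $\pi$ and $C$ through the nonlinear expression $f_{1,\kappa}(\lambda_*)$ is therefore exactly the right emphasis, and the paper's displayed interval for the ball ratio appears to be an error (its own claimed inequality is nonetheless true).
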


\begin{proof}
By \cref{prp_ecyltruncriesz} and \cref{lem_rieszcutratio1}, since here $\lambda=1$, we have
\[
\begin{aligned}
\sigma_{R_{1,\kappa}}^{3,\cyl}(\kappa/2)
&=\frac{4}{\kappa}+\frac{\I^2_{R_{1,\kappa}^{\cyl}}(B_{\kappa/2})}{\pi(\kappa/2)^2}\\
&=\frac{4}{\kappa}+4\kappa^2 \lt(\frac{\pi}{2} -\frac{17}{12} +\frac{C}{2}\rt)\\
&= \frac{40}{11}+\lt(\frac{11}{5}\rt)^2\lt(\frac{\pi}{2} -\frac{17}{12} +\frac{C}{2}\rt).
\end{aligned}
\]
By \cref{prp_eballriesztrunc}, we have
\[
\kappa_{\min} = \lt(\frac{3}{\pi}\rt)^{1/3}<\frac{11}{10}=\kappa<
\lt(\frac{15}{2\pi}\rt)^{1/3} = \kappa_{\max},
\]
so that
\[
\rho^{3,\ball}_{R_{1,\kappa}}=f_{1,\kappa}(\lambda_{*,1,\kappa}),
\]
where
\[
\lambda_{*,\kappa,\alpha} =
\lt[\frac{\beta+2}{\pi}\lt(\frac{\pi}{\beta}-\dfrac1{\kappa^\beta}\rt)\rt]^{1/2}
= \lt[\frac{5}{\pi}\lt(\frac{\pi}{3}-\lt(\frac{10}{11}\rt)^3\rt)\rt]^{1/2}
\]
and
\begin{equation}\label{riesztrunc:expf}
\begin{aligned}
f_{1,\kappa}(\lambda)
&= \frac{6\lambda}{\kappa} + 6\pi\kappa^{\beta-1}
\lt( \frac{\lambda^3}{3(\beta+2)} -\frac{\lambda}{\beta}+\frac{2}{3(\beta-1)}\rt)\\
&= \frac{60}{11}\lambda + 6\pi\lt(\frac{11}{10}\rt)^{2}
\lt( \frac{\lambda^3}{15} -\frac{\lambda}{3}+\dfrac1{3}\rt).
\end{aligned}
\end{equation}
One can then check using a symbolic computation library (we used Python's SymPy library) that 
\[
\sigma_{R_{1,\kappa}}^{3,\cyl}(\kappa/2)<\rho^{3,\ball}_{R_{1,\kappa}}.
\]
Symbolic computation guarantees the truthness of the inequality (evaluating constants such as $\pi$
and the Catalan constant $C$ to the required precision): see the interactive Jupyter Notebook
provided in the supplementary material.
We obtain the following inclusions:
\begin{align*}
\sigma_{R_{1,\kappa}}^{3,\cyl}(\kappa/2)
&\in (6.60 - 10^{-2}, 6.60 + 10^{-2}),\\
\rho^{3,\ball}_{R_{1,\kappa}}
&\in (6.80-10^{-2},6.80+10^{-2}). 
\end{align*}
\end{proof}

From this we deduce:

\begin{cor}
Let $n=3$, $\kappa=11/10$. Then we have $\rho_{R_{1,\kappa}}^{3,\cyl} < \rho_{R_{1,\kappa}}^{3,\ball}$.
\end{cor}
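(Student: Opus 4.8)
The statement is an immediate consequence of the preceding proposition, so the plan is simply to feed that inequality into the definition of $\rho_{R_{1,\kappa}}^{3,\cyl}$. Recall from \cref{dfn_rhoball_rhocyl} that
\[
\rho_{R_{1,\kappa}}^{3,\cyl} = \inf_{l>0} \sigma_{R_{1,\kappa}}^{3,\cyl}(l),
\]
so the optimal energy/mass ratio of infinite cylinders is bounded above by the ratio of \emph{any} single admissible cylinder. The idea is therefore to choose the particular radius for which the preceding proposition already furnishes a usable bound.

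Concretely, I would specialize to the cylinder of base radius $l = \kappa/2$. Taking this value in the infimum gives, for free,
\[
\rho_{R_{1,\kappa}}^{3,\cyl} \le \sigma_{R_{1,\kappa}}^{3,\cyl}(\kappa/2).
\]
The preceding proposition (with $n=3$, $\alpha=1$, $\kappa = 11/10$) establishes precisely that
\[
\sigma_{R_{1,\kappa}}^{3,\cyl}(\kappa/2) < \rho^{3,\ball}_{R_{1,\kappa}}.
\]
Chaining these two inequalities yields
\[
\rho_{R_{1,\kappa}}^{3,\cyl} \le \sigma_{R_{1,\kappa}}^{3,\cyl}(\kappa/2) < \rho^{3,\ball}_{R_{1,\kappa}},
\]
which is the claimed strict inequality.

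There is essentially no obstacle here: the hard analytic work---the closed-form evaluation of $\sigma_{R_{1,\kappa}}^{3,\cyl}(\kappa/2)$ in \cref{prp_ecyltruncriesz} and \cref{lem_rieszcutratio1}, the determination of $\rho^{3,\ball}_{R_{1,\kappa}} = f_{1,\kappa}(\lambda_{*,1,\kappa})$ from \cref{prp_eballriesztrunc}, and the symbolic comparison of the two constants (including the Catalan-constant term)---has all been carried out in the proposition. The only ingredient used beyond it is the trivial monotonicity of the infimum, namely that evaluating $\sigma_{R_{1,\kappa}}^{3,\cyl}$ at a single radius overestimates $\rho_{R_{1,\kappa}}^{3,\cyl}$. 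Hence the corollary requires no new estimate and follows in one line from the definition of $\rho^{\cyl}$.
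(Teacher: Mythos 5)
Your proof is correct and is exactly the argument the paper intends: the corollary follows from the preceding proposition by bounding the infimum $\rho_{R_{1,\kappa}}^{3,\cyl} = \inf_{l>0}\sigma_{R_{1,\kappa}}^{3,\cyl}(l)$ by its value at $l=\kappa/2$ and chaining with the strict inequality $\sigma_{R_{1,\kappa}}^{3,\cyl}(\kappa/2) < \rho^{3,\ball}_{R_{1,\kappa}}$. The paper leaves this one-line deduction implicit (``From this we deduce''), and your write-up supplies precisely that step.
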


This proves \cref{mainprp_prp1} and thus \cref{mainthm:thm1} in the case of the truncated Coulomb
potential thanks to \cref{prp_ecyleball_nonsphere}.

As a matter of fact, for an interval of $\kappa$ around~$11/10$, we observe numerically that for some $l=l(\kappa)$, we have
$\sigma^{3,\cyl}_{R_{1,\kappa}}(l(\kappa))< \rho_{R_{1,\kappa}}^{3,\ball}$ which implies $\rho_{R_{1,\kappa}}^{3,\cyl}<\rho_{R_{1,\kappa}}^{3,\ball}$ 
, see \cref{fig:trunc_coulomb}. We find  adequate values $l(\kappa)$
by optimizing approximately $l\mapsto \sigma^{3,\cyl}_{R_{1,\kappa}}(l)$. For this optimization, we
use Python's SciPy library and we use Simpson's quadrature formula to evaluate the integral in
\cref{fcyl_integral}.

 \begin{figure}[ht]
    \begin{minipage}{0.48\textwidth}
         \begin{center}
 		\captionsetup{width=.95\textwidth}
 		\includegraphics[width=.95\textwidth]{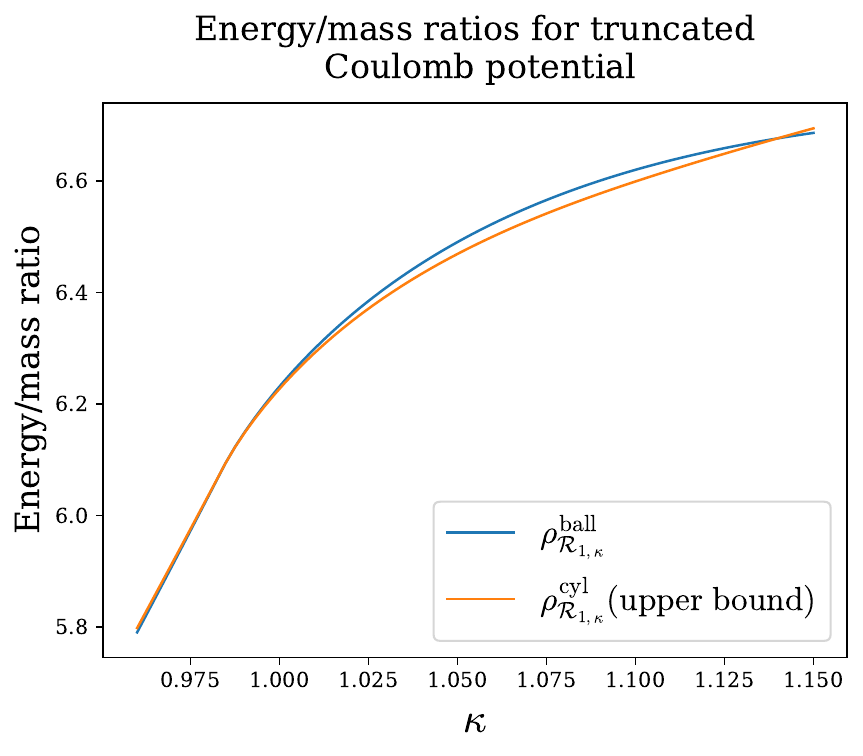}
 		\end{center}
 	\end{minipage}
    \begin{minipage}{0.5\textwidth}
         \begin{center}
 		\captionsetup{width=.95\textwidth}
 		\includegraphics[width=.95\textwidth]{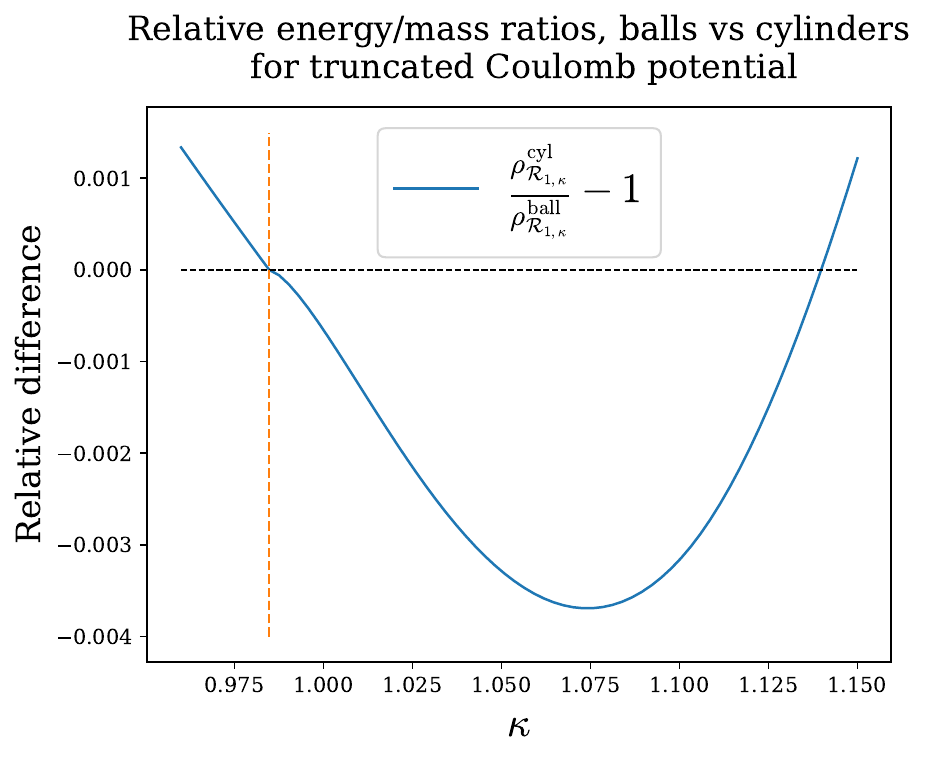}
 		\end{center}
 	\end{minipage}
 	\caption{\label{fig:trunc_coulomb} Comparison of the energy/mass ratios for balls and infinite cylinders with the truncated
	Coulomb potential for varying cut-length $\kappa$.
	We used $\sigma_{R_{1,\kappa}}^{3,\cyl}(l(\kappa))$ with appropriate $l(\kappa)$ as an upper bound
	for $\rho_{R_{1,\kappa}}^{3,\cyl}$. Convenient values $l(\kappa)$ are obtained using SciPy's library to optimize $l\mapsto \sigma_{R_{1,\kappa}}^{3,\cyl}(l)$.\\ 
	The vertical dashed line in the right figure corresponds to $\kappa=\kappa_{\mathrm{min}}$ from \cref{prp_eballriesztrunc}, the
	threshold below which the infimum $\rho^{3,\mathrm{ball}}_{R_{1,\kappa}}$ is obtained by taking
	balls $B_R$ with $R\up\infty$.}
 \end{figure}

\section{Yukawa potentials}\label{S_yukawa}

In this section we assume that $n=3$.

\subsection{Energy/mass ratio of balls}

We begin by computing the interaction energy on 1D slices as defined in~\cref{eq_defSG}.

\begin{lem}\label{lem_SYukawatrunc}
For every $\kappa>0$ and every $L>0$, we have
\[
S_{Y_{1,\kappa}}(L)
=2\kappa^3 \lt(\frac{L}{\kappa}-2+\lt(\frac{L}{\kappa}+2\rt)e^{-L/\kappa}\rt).
\]
\end{lem}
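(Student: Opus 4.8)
The plan is to unwind definition~\cref{eq_defSG} of $S^n_G$ for $n=3$ and $G=Y_{1,\kappa}$, reduce the double integral over the square $(0,L)^2$ to a one-dimensional integral, and evaluate it via elementary antiderivatives. First I would note that for $n=3$ the weight $|s-t|^{n-1}=|s-t|^2$ exactly cancels the singularity of the kernel: since $Y_{1,\kappa}(r)=e^{-r/\kappa}/r$ for $r>0$, the integrand becomes $|s-t|^2\,Y_{1,\kappa}(|s-t|)=|s-t|\,e^{-|s-t|/\kappa}$, which is bounded, so that
\[
S_{Y_{1,\kappa}}(L)=\int_0^L\int_0^L |s-t|\,e^{-|s-t|/\kappa}\,ds\,dt
\]
with no integrability concern.

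Since the integrand depends only on $|s-t|$, I would next reduce to a single integral. Splitting the square into $\{s<t\}$ and $\{s>t\}$ and using the symmetry $s\leftrightarrow t$ (equivalently, pushing forward the Lebesgue measure on $(0,L)^2$ by $(s,t)\mapsto s-t$, which produces the triangular density $u\mapsto L-|u|$ on $(-L,L)$), one obtains
\[
S_{Y_{1,\kappa}}(L)=2\int_0^L (L-u)\,u\,e^{-u/\kappa}\,du .
\]

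It then remains to compute this integral. I would write it as $2L\int_0^L u\,e^{-u/\kappa}\,du-2\int_0^L u^2 e^{-u/\kappa}\,du$ and evaluate each piece from the standard antiderivatives of $u\,e^{-u/\kappa}$ and $u^2 e^{-u/\kappa}$ (one, respectively two, integrations by parts). Collecting terms, the polynomial part contributes $2\kappa^2 L-4\kappa^3$ while the exponential part contributes $(2\kappa^2 L+4\kappa^3)e^{-L/\kappa}$; factoring out $2\kappa^3$ yields exactly $2\kappa^3\lt(\tfrac{L}{\kappa}-2+\lt(\tfrac{L}{\kappa}+2\rt)e^{-L/\kappa}\rt)$.

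There is no conceptual obstacle; the only care needed is the bookkeeping in the integrations by parts and the partial cancellation among the coefficients of $e^{-L/\kappa}$ coming from the two integrals. As a sanity check, letting $L/\kappa\to0$ one finds $S_{Y_{1,\kappa}}(L)=\tfrac{1}{3}L^3+O(L^4/\kappa)$, which matches the value $S_{R_1}^3(L)=\tfrac{1}{3}L^3$ of the untruncated Coulomb kernel (the earlier Riesz computation with $\beta=3$), consistent with $Y_{1,\kappa}(r)\to R_1(r)$ as $r/\kappa\to0$.
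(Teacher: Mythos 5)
Your proof is correct and follows essentially the same route as the paper: both exploit that the integrand depends only on $s-t$ to reduce the double integral to a one-dimensional computation (your triangular-density integral $2\int_0^L (L-u)\,u\,e^{-u/\kappa}\,du$ is exactly the paper's iterated integral $2\int_0^L\int_0^t r\,e^{-r/\kappa}\,dr\,dt$ after swapping the order of integration), and then finish with elementary antiderivatives. Your coefficient bookkeeping checks out, and your small-$L$ sanity check $S_{Y_{1,\kappa}}(L)=\tfrac13 L^3+O(L^4/\kappa)$ matches the remark made at the end of the paper's own proof.
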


\begin{proof}
We start from the definition~\cref{eq_defSG} of $S_{Y_{1,\kappa}}$ and we compute
\[
\begin{aligned}
S_{Y_{1,\kappa}}(L) 
&=\int_0^L \int_0^L |s-t|e^{-|s-t|/\kappa}\,ds\, dt\\
&= \int_0^L  \int_{-t}^{L-t} |r|e^{-|r|/\kappa}\,dr\,dt\\
&=2\int_0^L  \int_{0}^{t} |r|e^{-|r|/\kappa}\,dr\,dt\\
&=2\kappa^3 \int_0^{L/\kappa}\int_0^{t} r e^{-r}\,dr\, dt\\
&=2\kappa^3 \int_0^{L/\kappa}1-e^{-t}(1+t)\, dt\\
&=2\kappa^3 \lt(\frac{L}{\kappa}-2+\lt(\frac{L}{\kappa}+2\rt)e^{-L/\kappa}\rt).
\end{aligned}
\]
One can check that $S_{Y_{1,\kappa}}(L)\stackrel{L\dw0}{\sim} L^3/3$, which is consistent with
$S_{R_{1}}(L)=L^3/3$.
\end{proof}

We deduce the Yukawa energy of balls.

\begin{lem}\label{lem_Yukawa_nrj_of_balls}
For every $R>0$, we have
\[
\I_{Y_{1,\kappa}}(B_R)
= R^52^6\pi^2 \lambda^2\left(
\frac{1}{3}-\lambda(1-4\lambda^2)-\lambda(4\lambda^2+4\lambda+1)e^{-1/\lambda} \right),
\]
where $\lambda=\kappa/(2R)$.
\end{lem}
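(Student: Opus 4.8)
The plan is to apply \cref{cor:rieszball} in dimension $n=3$, which reduces the computation of $\I_{Y_{1,\kappa}}(B_R)$ to a single radial integral of the slice-energy function $S_{Y_{1,\kappa}}$ already obtained in \cref{lem_SYukawatrunc}. First I would evaluate the prefactor: for $n=3$ one has $|\Sp^{2}|=4\pi$ and $|\Sp^{1}|=2\pi$, so $\tfrac12|\Sp^{2}||\Sp^{1}|=4\pi^2$, and a direct substitution of the variable $u=\sqrt{R^2-r^2}$ (for which $r\,dr=-u\,du$, with the bounds swapping) gives the clean form
\[
\I_{Y_{1,\kappa}}(B_R) = 4\pi^2\int_0^R S_{Y_{1,\kappa}}\lt(2\sqrt{R^2-r^2}\rt)\,r\,dr = 4\pi^2\int_0^R S_{Y_{1,\kappa}}(2u)\,u\,du.
\]

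The second step is to insert the explicit expression from \cref{lem_SYukawatrunc}, namely $S_{Y_{1,\kappa}}(2u)=2\kappa^3\lt(\tfrac{2u}{\kappa}-2+(\tfrac{2u}{\kappa}+2)e^{-2u/\kappa}\rt)$, and split the integrand into a purely polynomial part and an exponential part. Writing $a=2/\kappa$, the exponential part is $8\pi^2\kappa^3(au^2+2u)e^{-au}$, so after pulling out the constant $8\pi^2\kappa^3$ the whole computation reduces to the two elementary integrals $\int_0^R u\,e^{-au}\,du$ and $\int_0^R u^2 e^{-au}\,du$, which I would evaluate via their standard primitives (equivalently, one or two integrations by parts). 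These give $\tfrac1{a^2}\lt[1-(1+aR)e^{-aR}\rt]$ and $\tfrac2{a^3}\lt[1-(1+aR+\tfrac{a^2R^2}{2})e^{-aR}\rt]$ respectively, while the polynomial part contributes the elementary $\tfrac{2R^3}{3\kappa}-R^2$.

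The final step is purely bookkeeping: substitute $\kappa=2R\lambda$, so that $aR=2R/\kappa=1/\lambda$, $\kappa^2=4R^2\lambda^2$ and $\kappa^3=8R^3\lambda^3=2^3R^3\lambda^3$. Collecting the polynomial and exponential contributions, factoring out $R^5$ together with $2^6\pi^2\lambda^2$, and regrouping the coefficients of $e^{-1/\lambda}$ into the single combination $(4\lambda^2+4\lambda+1)$ should produce exactly the stated closed form. The main obstacle here is not conceptual but combinatorial: one must carry the polynomial coefficients of $e^{-aR}$ correctly through the two integrations and then through the substitution $aR=1/\lambda$, after which the non-exponential terms collapse neatly. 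As sanity checks I would use the already-noted asymptotics $S_{Y_{1,\kappa}}(L)\sim L^3/3$ as $L\dw 0$ (matching the Coulomb slice energy), and verify that in the unscreened limit $\kappa\to\oo$ (i.e. $\lambda\to\oo$) the bracket vanishes like $\lambda^{-2}$, so that $\I_{Y_{1,\kappa}}(B_R)$ converges to a nonzero multiple of $R^5$, consistently with $\I_{R_1}(B_R)$ computed in \cref{lem_Riesz_nrj_of_balls}.
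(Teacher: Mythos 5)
Your proposal is correct and follows essentially the same route as the paper: both reduce $\I_{Y_{1,\kappa}}(B_R)$ via \cref{cor:rieszball} to the single radial integral $4\pi^2\int_0^R S_{Y_{1,\kappa}}\big(2\sqrt{R^2-r^2}\big)\,r\,dr$ with $S_{Y_{1,\kappa}}$ taken from \cref{lem_SYukawatrunc}, and then collect terms after substituting $\lambda=\kappa/(2R)$. The only difference is computational: you use the substitution $u=\sqrt{R^2-r^2}$, turning everything into the elementary integrals $\int_0^R u^k e^{-au}\,du$, whereas the paper first rescales to $[0,1]$ and evaluates the exponential terms via $r=\sin\theta$ with explicit trigonometric primitives — your variant is, if anything, slightly cleaner, and your two sanity checks (the $L\dw0$ asymptotics of $S_{Y_{1,\kappa}}$ and the $\kappa\to\oo$ limit recovering $\tfrac{2^5\pi^2}{15}R^5$) match the paper's own remarks.
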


\begin{proof}
By \cref{cor:rieszball} we have
\begin{align*}
\I_{Y_{1,\kappa}}(B_R)
&= \I_{Y_{1,\kappa}}(B_R)\\
&= 4\pi^2\int_0^R S_{Y_{1,\kappa}}\big(2\sqrt{R^2-r^2}\big)r\,dr\\
&= 8\pi^2\kappa^3 \int_0^R 
\left[\frac{\sqrt{1-(r/R)^2}}{\kappa/(2R)}-2+\left(\frac{\sqrt{1-(r/R)^2}}{\kappa/(2R)}+2\right)e^{-\frac{\sqrt{1-(r/R)^2}}{\kappa/(2R)}}\right]
r\,dr\\
&= R^52^6\pi^2 \int_0^1 
\left(\frac{\kappa}{2R}\right)^3\left[\frac{\sqrt{1-r^2}}{\kappa/(2R)}
-2+\left(\frac{\sqrt{1-r^2}}{\kappa/(2R)}+2\right)e^{-\frac{\sqrt{1-r^2}}{\kappa/(2R)}}\right] r\,dr\\
&= R^5 2^6\pi^2\int_0^1 
\lambda^3\left[\frac{\sqrt{1-r^2}}{\lambda}-2+\left(\frac{\sqrt{1-r^2}}{\lambda}+2\right)e^{-\frac{\sqrt{1-r^2}}{\lambda}}\right]
r\,dr
\end{align*}
where we have set $\lambda=\kappa/(2R)$.
We compute
\[
\int_0^1 r\sqrt{1-r^2} \,dr = \left[-\frac{1}{3}(1-r^2)^{3/2}\right]_0^1=\frac{1}{3},
\]
then
\begin{align*}
\int_0^1 r e^{-\frac{\sqrt{1-r^2}}{\lambda}}\,dr
=\int_0^{\pi/2} (\sin\theta\cos\theta) e^{-\frac{\cos\theta}{\lambda}}\,d\theta
= \left[\lambda e^{-\frac{\cos\theta}{\lambda}}(\lambda+\cos\theta)\right]_0^{\pi/2}
=\lambda^2-\lambda(1+\lambda) e^{-\frac{1}{\lambda}}
\end{align*}
and finally using the change of variable $r=\sin\theta$ we compute 
\begin{align*}
\int_0^1 r\sqrt{1-r^2}e^{-\frac{\sqrt{1-r^2}}{\lambda}}\,dr
&=\int_0^{\pi/2} (\sin\theta\cos^2\theta)e^{-\frac{\cos\theta}{\lambda}}\,d\theta\\
&=\left[\lambda\left(2\lambda^2+2\lambda\cos\theta+\cos^2\theta\right)e^{-\frac{\cos\theta}{\lambda}}\right]_0^{\pi/2}\\
&=2\lambda^3-\lambda(2\lambda^2+2\lambda+1)e^{-\frac{1}{\lambda}}.
\end{align*}
These identities lead to
\begin{align*}
\I_{Y_{1,\kappa}}(B_R)
&=R^52^6\pi^2
\lambda^3\left(
\frac{1}{3\lambda}-1+2\lambda^2-2\lambda(1+\lambda)e^{-1/\lambda}+2\lambda^2-(2\lambda^2+2\lambda+1)e^{-1/\lambda}
\right)\\
&=R^52^6\pi^2
\lambda^2\left( \frac{1}{3}-\lambda(1-4\lambda^2)-\lambda(4\lambda^2+4\lambda+1)e^{-1/\lambda}
\right),
\end{align*}
as claimed.
\end{proof}
\begin{rmk}
One can check that 
\[
\left( \frac{1}{3}-\lambda(1-4\lambda^2)-\lambda(4\lambda^2+4\lambda+1)e^{-1/\lambda} \right)
\stackrel{\lambda\up\oo}\sim \frac{1}{30\lambda^2}.
\]
We deduce 
\[
\lim_{\kappa\to\infty} \I_{Y_{1,\kappa}}(B_R) = R^5 \left(\frac{2^5\pi^2}{15}\right),
\]
which is consistent with
\[
\mathcal{R}_1(B_R)
= R^5\left(\frac{2^3 4\pi 2\pi}{6}\right)\int_0^{\pi/2} \cos^4\theta \sin\theta\,d\theta
= R^5\left(\frac{2^5\pi^2}{15}\right).
\]
\end{rmk}

\begin{prp}\label{prp_eballyukawa}
For any $\kappa>0$, we have
\be\label{prp_eballyukawa_1}
\rho^{3,\ball}_{Y_{1,\kappa}}= f_\kappa(\lambda_*)
\ee
where $f_\kappa:[0,+\infty)\to (0,+\infty)$ is defined by
\[
f_\kappa(\lambda)\coloneqq
\begin{cases}
\displaystyle
\frac{6\lambda}{\kappa}+4\pi
\kappa^2\big(1-3\lambda(1-4\lambda^2)-3\lambda(4\lambda^2+4\lambda+1)e^{-1/\lambda}\big)&
\quad \text{ if } \lambda>0,\\
\qquad 4\pi\kappa^2&\quad \text{ if }\lambda=0
\end{cases}
\]
and
\[
\lambda_* \coloneqq \argmin_{\lambda\in [0,+\infty)} f_\kappa(\lambda).
\]
In addition, $\lambda_*=0$ if $\kappa\leq (2\pi)^{-1/3}$ and $\lambda_*>0$ otherwise.
\end{prp}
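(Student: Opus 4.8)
The plan is to reduce the computation of $\rho^{3,\ball}_{Y_{1,\kappa}}$ to a one‑dimensional minimization of $f_\kappa$, and then to locate its minimizer by elementary calculus. First I would rewrite the energy/mass ratio of a ball $B_R$ in the variable $\lambda=\kappa/(2R)$. Since $n=3$ gives $P(B_R)/|B_R|=3/R=6\lambda/\kappa$ and $|B_R|=\tfrac43\pi R^3$, dividing the formula of \cref{lem_Yukawa_nrj_of_balls} by $|B_R|$ and using $R\lambda=\kappa/2$ (so that $R^2\lambda^2=\kappa^2/4$) collapses the prefactor $R^5 2^6\pi^2\lambda^2/|B_R|$ to $12\pi\kappa^2$, giving
\[
\frac{\cE_{Y_{1,\kappa}}(B_R)}{|B_R|}=\frac{6\lambda}{\kappa}+12\pi\kappa^2\lt(\tfrac13-\lambda(1-4\lambda^2)-\lambda(4\lambda^2+4\lambda+1)e^{-1/\lambda}\rt)=f_\kappa(\lambda).
\]
As $R$ ranges over $(0,\oo)$, $\lambda$ ranges over $(0,\oo)$, with $\lambda\dw0$ corresponding to $R\up\oo$; since $e^{-1/\lambda}$ beats every power of $\lambda$ as $\lambda\dw0$, one checks $f_\kappa(\lambda)\to 4\pi\kappa^2=f_\kappa(0)$, so $f_\kappa$ is continuous on $[0,\oo)$. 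Moreover $\I_{Y_{1,\kappa}}(B_R)\ge0$ forces the interaction part of $f_\kappa$ to be nonnegative, whence $f_\kappa(\lambda)\ge 6\lambda/\kappa\to\oo$. By continuity and coercivity the infimum is attained at some $\lambda_*\in[0,\oo)$, and $\rho^{3,\ball}_{Y_{1,\kappa}}=\inf_{\lambda\ge0}f_\kappa(\lambda)=f_\kappa(\lambda_*)$.

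Next I would differentiate. Writing $h(\lambda)$ for the bracket in $f_\kappa$, a direct computation gives
\[
h'(\lambda)=-3+36\lambda^2-\lt(36\lambda^2+36\lambda+15+\tfrac3\lambda\rt)e^{-1/\lambda},
\]
so that $f_\kappa'(\lambda)=\tfrac6\kappa+4\pi\kappa^2h'(\lambda)$. Since all the exponential terms vanish as $\lambda\dw0$, we get $h'(\lambda)\to-3$ and hence $f_\kappa'(0^+)=\tfrac6\kappa-12\pi\kappa^2=(6-12\pi\kappa^3)/\kappa$, which changes sign exactly at $\kappa=(2\pi)^{-1/3}$. If $\kappa>(2\pi)^{-1/3}$ then $f_\kappa'(0^+)<0$, so $f_\kappa$ strictly decreases on some interval $(0,\delta)$ and therefore $f_\kappa(0)>\inf_{(0,\delta)}f_\kappa$; the minimizer cannot be at the boundary, so $\lambda_*>0$. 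This is the easy half of the dichotomy.

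The crux is the converse: for $\kappa\le(2\pi)^{-1/3}$ I must show $f_\kappa$ is increasing on the \emph{whole} half-line $(0,\oo)$, not merely near $0$. Because $\kappa\le(2\pi)^{-1/3}$ means $6-12\pi\kappa^3\ge0$, it suffices to establish the global bound $h'(\lambda)>-3$ for every $\lambda>0$, for then $f_\kappa'(\lambda)>\tfrac6\kappa-12\pi\kappa^2\ge0$. The bound $h'(\lambda)>-3$ reads
\[
36\lambda^2>\lt(36\lambda^2+36\lambda+15+\tfrac3\lambda\rt)e^{-1/\lambda},
\]
and the substitution $u=1/\lambda$ followed by clearing the positive factor $u^2e^u$ reduces it to the purely elementary inequality
\[
e^u>1+u+\tfrac{5}{12}u^2+\tfrac1{12}u^3,\qquad u>0.
\]
I would prove this by setting $\phi(u)=e^u-1-u-\tfrac5{12}u^2-\tfrac1{12}u^3$ and noting $\phi(0)=\phi'(0)=0$, $\phi''(0)=\tfrac16>0$, and $\phi'''(u)=e^u-\tfrac12>0$ on $[0,\oo)$; integrating upward gives $\phi''>0$, then $\phi'>0$, then $\phi>0$ on $(0,\oo)$. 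Consequently $f_\kappa'>0$ on $(0,\oo)$, so $f_\kappa$ is strictly increasing and $\lambda_*=0$. The main obstacle is exactly this global monotonicity: the sign of $f_\kappa'$ near $0$ is immediate, but excluding interior critical points for all $\lambda$ hinges on the sharp polynomial-versus-exponential inequality above, whose coefficients are precisely tuned so that the threshold coincides with $(2\pi)^{-1/3}$.
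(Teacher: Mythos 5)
Your proof is correct, and its crux is genuinely different from the paper's. The shared part is the reduction: both arguments pass to $\lambda=\kappa/(2R)$, obtain the same $f_\kappa$ (your limit $f_\kappa(0^+)=4\pi\kappa^2$ is the right one; the paper's proof has a typo ``$4\pi^2$'' at that point), get existence of a minimizer from continuity at $0$ plus coercivity, and read the threshold off $f_\kappa'(0^+)=\frac{6}{\kappa}-12\pi\kappa^2$. The paper's key step is then \emph{global strict convexity}: it computes $f_\kappa''$ and shows $f_\kappa''>0$ by reducing to the truncated-Taylor inequality $e^{u}>1+u+\frac{u^2}{2!}+\frac{u^3}{3!}+\frac{u^4}{4!}$ with $u=1/\lambda$; convexity yields a unique minimizer and settles both cases at once from the sign of $f_\kappa'(0)$. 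You never differentiate twice: instead you prove the global first-order bound $f_\kappa'(\lambda)>f_\kappa'(0^+)$ for all $\lambda>0$, which after the substitution $u=1/\lambda$ is equivalent to the cubic inequality $e^u>1+u+\frac{5}{12}u^2+\frac{1}{12}u^3$, verified by three differentiations (it also follows at once from $e^u>1+u+\frac{u^2}{2}+\frac{u^3}{6}$). This is strictly weaker than convexity --- it says $f_\kappa'$ stays above its limiting value at $0$, not that $f_\kappa'$ is increasing --- yet it is exactly what the subcritical case needs, and your supercritical case ($f_\kappa'(0^+)<0$ rules out a minimum at $0$) is sound. What your route gives up: for $\kappa>(2\pi)^{-1/3}$ you do not obtain uniqueness of the minimizer, so the notation $\lambda_*=\argmin_{[0,+\infty)} f_\kappa$ in the statement is not justified as a single point by your argument (you only show every minimizer is positive, which does suffice for the stated equality and dichotomy); moreover, the paper re-uses strict convexity of $f_\kappa$ later, in the proof of \cref{lem_ecylball_y}, to certify the numerical bracketing of $\lambda_*$ and to exploit tangent-line lower bounds, so the second-derivative computation also serves a purpose beyond this proposition.
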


\begin{proof}
Recall that $P(B_R)/|B_R|=3/R=6\lambda/\kappa$ and that from \cref{lem_Yukawa_nrj_of_balls} we have
\[
\frac{\I_{Y_{1,\kappa}}(B_R)}{|B_R|}
=12\pi \kappa^2\left( \frac{1}{3}-\lambda(1-4\lambda^2)-\lambda(4\lambda^2+4\lambda+1)e^{-1/\lambda}
\right),
\]
where $\lambda=\kappa/(2R)$.
We then have,
\[
\frac{\cE_{Y_{1,\kappa}}(B_R)}{|R|}
= \frac{6\lambda}{\kappa}+4\pi
\kappa^2\big(1-3\lambda(1-4\lambda^2)-3\lambda(4\lambda^2+4\lambda+1)e^{-1/\lambda}\big)\\
\eqqcolon f_\kappa(\lambda).
\]
There holds
\[
\lim_{\lambda\to 0^+} f_\kappa(\lambda) = 4\pi^2
\]
and extending $f_\kappa$ by continuity at $0$, $f_\kappa$ is smooth on $[0,+\oo)$. Moreover  
\[
f_\kappa(\lambda) \stackrel{\lambda\up\oo}\sim \frac{6\lambda}{\kappa}.
\]
We deduce that $f_\kappa$ admits a minimum. 
Next, we  compute
\[
\begin{multlined}
f_\kappa'(\lambda)
=\frac{6}{\kappa} - 12\pi \kappa^{2}\left( 1 - 12\lambda^{2} + \frac{e^{-1/\lambda}}{\lambda}\left(12\lambda^{3} +
12\lambda^2 + 5\lambda + 1\right) \right)\\
\end{multlined}
\]
and
\[
f''_\kappa(\lambda)
=12\pi \kappa^2\lt( 24 \lambda - \frac{e^{-1/\lambda}}{\lambda^3} \lt(24\lambda^4 +
24\lambda^3+12\lambda^{2}+4\lambda+1\rt)\rt).
\]
Notice that 
\[
24 \lambda^4 - e^{-1/\lambda} \lt(24\lambda^4 + 24^3+12\lambda^{2}+4\lambda+1\rt)>0
\iff
e^{1/\lambda} >
1+\lambda^{-1}+\frac{\lambda^{-2}}{2!}+\frac{\lambda^{-3}}{3!}+\frac{\lambda^{-4}}{4!},
\]
which is obviously true, thus $f_\kappa$ is strictly convex.
In particular,  $f_\kappa$ admits a unique minimizer $\lambda_*\in[0,+\infty)$.
In addition, we have 
\[
f_{\kappa}'(0) = \frac{6}{\kappa}-12\pi\kappa^2
\]
and we deduce that  $\lambda_*=0$ if $\kappa \leq (2\pi)^{-\frac{1}{3}}$, and $\lambda_*>0$
otherwise.
\end{proof}

\subsection{Energy/mass ratio of infinite cylinders}

\begin{prp}\label{prp_ecylyukawa}
Let $\kappa,l>0$ and $\lambda=\frac{\kappa}{2l}$. Setting
\[
I(s) \coloneqq \frac{l(2l-s)}{2}\arccos\lt(\frac{s}{2l}\rt)
+ls\arcsin\lt(\sqrt{\frac{2l-s}{4l}}\rt)
-\frac{s}{4}\sqrt{(2l)^2-s^2},
\]
we have
\[
\sigma_{Y_{1,\kappa}}^{3,\mathrm{cyl}}(l) =\frac2l+\frac{8}{l^2}\int_0^{2l} s K_0(s/\kappa)I(s)\,ds,
\]
where $K_0$ is a modified Bessel function of the second kind, defined by
\[
K_0(\sigma)= \int_{0}^\infty e^{-\sigma \cosh(\xi)} \, d\xi.
\]
\end{prp}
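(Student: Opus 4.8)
The plan is to reduce the planar self-interaction of the disk to a one-dimensional integral in the pairwise distance, using the slicing corollary already established. By \cref{cor:rieszcyl} with $n=3$,
\[
\sigma_{Y_{1,\kappa}}^{3,\cyl}(l) = \frac{2}{l} + \frac{1}{|B_l^2|}\,\I^2_{Y_{1,\kappa}^{\cyl}}(B_l^2),\qquad |B_l^2| = \pi l^2,
\]
so everything reduces to evaluating the planar self-interaction of $B_l^2$ against the averaged kernel $Y_{1,\kappa}^{\cyl}$. The first step is to identify this kernel: writing $x'\in\R^2$ and performing the substitution $t=|x'|\sinh\xi$ in
\[
Y_{1,\kappa}^{\cyl}(x') = \int_{\R}\frac{e^{-\sqrt{|x'|^2+t^2}/\kappa}}{\sqrt{|x'|^2+t^2}}\,dt,
\]
the factor $\sqrt{|x'|^2+t^2} = |x'|\cosh\xi$ cancels against $dt = |x'|\cosh\xi\,d\xi$, and the integral representation of $K_0$ recalled in the statement yields $Y_{1,\kappa}^{\cyl}(x') = 2K_0(|x'|/\kappa)$.

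Next I would apply \cref{cor:rieszball} to the radial kernel $H(\cdot)=2K_0(\cdot/\kappa)$ in ambient dimension two. There $\tfrac12|\Sp^1||\Sp^0| = 2\pi$ and the weight $r^{n-2}$ becomes $r^0=1$, so
\[
\I^2_{H}(B_l^2) = 2\pi\int_0^l S_H^2\lt(2\sqrt{l^2-r^2}\rt)\,dr,\qquad S_H^2(L) = 2\int_0^L\int_0^L |s-t|\,K_0(|s-t|/\kappa)\,ds\,dt.
\]
Using the elementary identity $\int_0^L\int_0^L \phi(|s-t|)\,ds\,dt = 2\int_0^L (L-u)\phi(u)\,du$ with $\phi(u)=uK_0(u/\kappa)$, the slice energy simplifies to $S_H^2(L) = 4\int_0^L (L-u)u\,K_0(u/\kappa)\,du$. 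Substituting $L=2\sqrt{l^2-r^2}$ and interchanging the order of the $(r,u)$-integration — all integrands are nonnegative, so Tonelli applies — one moves the $u$-integral (renamed $s$) to the outside: for fixed $s\in(0,2l)$ the constraint $u\le 2\sqrt{l^2-r^2}$ reads $r\le\sqrt{l^2-s^2/4}$, giving
\[
\I^2_H(B_l^2) = 8\pi\int_0^{2l} s\,K_0(s/\kappa)\lt[\int_0^{\sqrt{l^2-s^2/4}}\lt(2\sqrt{l^2-r^2}-s\rt)\,dr\rt]\,ds.
\]

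It remains to check that the inner integral equals $I(s)$. Using the primitive $\int\sqrt{l^2-r^2}\,dr = \tfrac12 r\sqrt{l^2-r^2}+\tfrac{l^2}{2}\arcsin(r/l)$ and noting that at the endpoint $a=\sqrt{l^2-s^2/4}$ one has $\sqrt{l^2-a^2}=s/2$ and $\arcsin(a/l)=\arccos(s/(2l))$, the bracket evaluates to $l^2\arccos(s/(2l))-\tfrac{s}{4}\sqrt{4l^2-s^2}$. This coincides with $I(s)$ once one invokes the half-angle identity $\tfrac12\arccos(s/(2l)) = \arcsin\lt(\sqrt{(2l-s)/(4l)}\rt)$, which splits $l^2\arccos(s/(2l))$ as $\tfrac{l(2l-s)}{2}\arccos(s/(2l)) + ls\arcsin\lt(\sqrt{(2l-s)/(4l)}\rt)$. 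Dividing by $|B_l^2|=\pi l^2$ then yields the claimed formula. I expect the two genuinely technical points to be the Bessel identification in the first step and the explicit evaluation of the inner integral together with its trigonometric rewriting in the last step; the interchange of integration order and the reduction of $S_H^2$ are routine once nonnegativity is noted.
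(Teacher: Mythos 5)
Your proof is correct, but it takes a genuinely different route from the paper's. The paper claims that the lack of an elementary closed form for the sliced kernel ``prevents us from using \cref{cor:rieszcyl}'' and instead foliates the disk by circular arcs $\partial B'_s(re_1)\cap B'_l$ centered at one of the two interacting points; this produces an arc-length factor $2s(\pi-\gamma(r,s,l))$ and forces a lengthy search for a primitive of $r\arccos\big(\tfrac{r^2+s^2-l^2}{2rs}\big)$ (integration by parts plus several changes of variables), together with a case analysis $s<l$ versus $s>l$, before arriving at $\cC_{1,\kappa}(l)=8\pi\int_0^{2l}sK_0(s/\kappa)I(s)\,ds$. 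You sidestep the paper's objection entirely: you never need a closed form for $S^2_H(L)=4\int_0^L(L-u)u\,K_0(u/\kappa)\,du$; you keep it as an integral, feed it into \cref{cor:rieszball} in dimension two (exactly as the paper itself does in the truncated Coulomb section), and then use Tonelli to make the pair distance $s$ the outer variable, so that the Bessel factor is never integrated symbolically. The inner integral $\int_0^{\sqrt{l^2-s^2/4}}\big(2\sqrt{l^2-r^2}-s\big)\,dr$ is elementary, and your half-angle identity $\tfrac12\arccos\big(\tfrac{s}{2l}\big)=\arcsin\big(\sqrt{(2l-s)/(4l)}\big)$ correctly shows that your simpler expression $l^2\arccos\big(\tfrac{s}{2l}\big)-\tfrac{s}{4}\sqrt{(2l)^2-s^2}$ coincides with the paper's $I(s)$ (the paper's form is simply unsimplified). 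In substance both arguments compute the distance distribution of the disk --- the paper via arc lengths, you via chord lengths --- and they share only the identification $\int_\R e^{-\sigma\sqrt{1+u^2}}(1+u^2)^{-1/2}\,du=2K_0(\sigma)$, which you establish up front via $t=|x'|\sinh\xi$ and the paper establishes midway through its proof. What your approach buys is brevity and reuse of the slicing machinery already in place (no $\arccos$ primitive, no case analysis); it also shows that the paper's stated obstruction to using \cref{cor:rieszcyl} is not a real one.
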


\begin{proof}
Recall that $B'_l$ denotes the disk of radius $l$ in dimension $2$ and let us recall the following formula from \cref{lem_cylinf},
\[
\sigma_{Y_{1,\kappa}}^{3,\cyl}(l)
=  \frac{2}{l}+\dfrac1{|B_l'|}\int_{B_l'\times B_l'} \int_\R Y_{1,\kappa}(x-y,t)\,dt\,dx\,dy.
\]
We see that we have to estimate the integral term:
\[
\cC_{1,\kappa}(l)
\coloneqq \int_{B'_l}\int_{B'_l} \int_\R Y_{1,\kappa}(x-y,t) \,dt\, dx\,dy.
\]
Unfortunately, the inner integral:
\[
S_{Y_{\kappa,\alpha}}^{3,\mathrm{cyl}} \coloneqq \int_\R Y_{1,\kappa}(x-y,t)\, dt 
\]
does not have an analytic expression and this prevents us from using \cref{cor:rieszcyl} to get a numerically computable formula for
$\sigma_{Y_{\kappa,1}}^{3,\mathrm{cyl}}(l)$.
Instead, using the change of variable  $y=r\tau$ with $r>0$, $\tau\in\Sp^1$ and using the fact that $Y_{1,\kappa}$ is radial, we compute
\begin{align*}
\cC_{1,\kappa}(l)=\int_{\Sp^1}\int_0^l r^{} \int_{B'_l} \int_\R
Y_{1,\kappa}(x-r\tau,t)\, dx \,dt\,dr\,d\h^1(\tau).
\end{align*}
For a fixed $\tau\in\Sp^1$, there exists a rotation $R$ in $\R^2$ such that $\tau=Re_1$. Performing the change of variable $x =R\tilde x$ and using the symmetry $Y_{1,\kappa}(Rz,t)=Y_{1,\kappa}(z,t)$ for $z\in\R^2$, we obtain
\begin{align*}
\cC_{1,\kappa}(l)
&=|\Sp^{1}|\ \int_0^l r^{} \int_{B'_l} \int_\R
Y_{1,\kappa}(x-re_1,t)\, dx \,dt\,dr\\
&=2\pi\int_0^l r^{} \int_{B'_l} \int_\R
\dfrac{e^{-\sqrt{|x-re_1|^2+t^2}/\kappa}}{(|x-re_1|^2+t^2)^{1/2}} \,dt\, dx\,dr.
\end{align*}
Decomposing the integration over $B_l'$ over the circular arcs $B_l'\partial B'_s(re_1)$, $s>0$, we have by Fubini,
\begin{align*}
\cC_{1,\kappa}(l)
&=2\pi\int_0^l r \int_0^{l+r}  \int_{B_l'\cap\{|x-re_1|=s\}} \int_\R
\dfrac{e^{-\sqrt{s^2+t^2}/\kappa}}{(s^2+t^2)^{1/2}}\,dt\,d\mathcal{H}^1(x)\, ds\,dr\\
&=2\pi\int_0^l r \int_0^{l+r}  C(r,s,l)\lt(  \int_\R \dfrac{e^{-\sqrt{s^2+t^2}/\kappa}}{(s^2+t^2)^{1/2}}\,dt\rt)\, ds\,dr,
\end{align*}
where $C(r,s,l)$ is the circle-length
\[
C(r,s,l)=|\partial B'_s(re_1)\cap B'_l| =\mathcal{H}^{1}(\partial B'_s(re_1)\cap B'_l).
\]
We slightly simplify the expression of $\cC_{1,\kappa}(l)$ by using the change of variable $t=su$ in the inner integral to get 
\begin{equation}\label{ecylyukawa:eq1}
\cC_{1,\kappa}(l)
=2\pi\int_0^l r \int_0^{l+r}  C(r,s,l)\lt( \int_\R \dfrac{e^{-(s/\kappa)\sqrt{1+u^2}}}{(1 + u^2)^{1/2}}\,du\rt)\, ds\,dr.
\end{equation}
For $s<l-r$, we have a full circle, that is $C(r,s,l)=2\pi l$. For $s>l+r$, $\partial
B'_s(re_1)\cap B'_l=\void$ and $C(r,s,l)=0$.
For $l-r<s<l+r$, we have $\partial B'_s(re_1)\cap \partial B'_l=\{(r+s\cos\gamma,\pm\sin\gamma)\}$
where $\gamma=\gamma(r,s,l)\in (0,\pi)$ satisfies
\[
(r+s\cos\gamma)^2 + (s\sin\gamma)^2=l^2 \iff \gamma=\arccos\lt(\dfrac{l^2-r^2-s^2}{2rs}\rt).
\]
Hence, setting
\[
\gamma(r,s,l)=
\begin{cases}
\qquad0 &\text{if }s<l-r\\
\arccos\lt(\dfrac{l^2-r^2-s^2}{2rs}\rt)&\text{if } l-r<s<l+r\\
\qquad\pi&\text{if }s>l+r,
\end{cases}
\]
we have
\[
C(r,s,l) = 2(\pi-\gamma(r,s,l)).
\]
Inserting this into \cref{ecylyukawa:eq1}, we obtain
\[
\cC_{1,\kappa}(l) =4\pi\int_0^l r\int_0^{l+r} s(\pi-\gamma(r,s,l))\lt(  \int_\R \dfrac{e^{-(s/\kappa)\sqrt{1+u^2}}}{(1 +
u^2)^{1/2}}\,du\rt)\, ds\,dr.
\]
Using Fubini, this rewrites as
\be\label{proof_prp_ecylyukawa}
\cC_{1,\kappa}(l)
=4\pi \int_0^{2l}s\lt[
\lt(  \int_\R \dfrac{e^{-(s/\kappa)\sqrt{1+u^2}}}{(1 + u^2)^{1/2}}\,du\rt) \lt(\int_{(s-l)_+}^l r
\varphi(r,s,l)\, dr\rt) \rt]\,ds,
\ee
where we have set 
\[
\begin{aligned}
\varphi(r,s,l)
\coloneqq&\pi-\gamma(r,s,l)\\
=&\begin{cases}
\qquad\pi &\text{if } r <l-s,\\
\arccos\lt(\dfrac{r^2+s^2-l^2}{2rs}\rt)&\text{if }\big[s>l\text{ and } r>s-l\big]\text{ or }\big[s<l\text{ and }r>l-s\big]\\
\qquad0&\text{if }s>l\text{ and } r<s-l.
\end{cases}
\end{aligned}
\]
Now we compute the two inner integrals in~\cref{proof_prp_ecylyukawa} in terms of well-known special
functions. We start with the integral with respect to $u$. Using $u = \sinh(\xi)$, we find
\begin{equation}\label{eballyukawa:modBessel}
\int_{\mathbb{R}} \frac{e^{-\sigma \sqrt{1+u^2}}}{\sqrt{1+u^2}} \, du
= \int_{\mathbb{R}} e^{-\sigma \cosh(\xi)} \, d\xi
= 2K_0(\sigma),
\end{equation}
where $K_0$ is a modified Bessel function of the second kind (see, e.g., \cite[p.~181,~(5)]{Wat1995}). Recall that $K_0$ is a smooth integrable function on $(0,+\infty)$ with 
\[
K_0(x)\stackrel{x\dw 0^+}{\sim}
\log(1/x).
\]
Next, we compute the integral with respect to $r$ in~\cref{proof_prp_ecylyukawa}.
If $s<l$, we have
\begin{equation}\label{cutvarphi:eq1}
\int_{(s-l)_+}^l r \varphi(r,s,l)\, dr
=\pi\dfrac{(l-s)^2}2 + \int_{l-s}^l r\arccos\lt(\dfrac{r^2+s^2-l^2}{2rs}\rt)\, dr.
\end{equation}
If $s>l$, we have
\begin{equation}\label{cutvarphi:eq2}
\int_{(s-l)_+}^l r \varphi(r,s,l)\, dr
= \int_{s-l}^lr\arccos\lt(\dfrac{r^2+s^2-l^2}{2rs}\rt)\, dr.
\end{equation}
Let us set $a=l-s$ and $b=l+s$. We search for a primitive function of
$r\arccos\big(\tfrac{r^2+s^2-l^2}{2rs}\big)$ on $(|a|,l)\subset(|a|,b)$.
Integrating by parts, we have
\begin{multline}\label{arccosint:eq1}
\int r\arccos\lt(\dfrac{r^2+s^2-l^2}{2rs}\rt)\, dr
= \frac{r^2}{2}\arccos\lt(\dfrac{r^2+s^2-l^2}{2rs}\rt)
+\frac{1}{2}\int \frac{r(r^2+ab)}{\sqrt{b^2-r^2}\sqrt{r^2-a^2}}\,dr.
\end{multline}
Notice that
\begin{equation}\label{arccosint:eq2}
\int \frac{r}{\sqrt{b^2-r^2}\sqrt{r^2-a^2}}\,dr
=\frac{1}{2}\arcsin\lt(\frac{2r^2-a^2-b^2}{b^2-a^2}\rt).
\end{equation}
Then we have
\begin{multline}\label{arccosint:eq3}
\int \frac{r^3}{\sqrt{b^2-r^2}\sqrt{r^2-a^2}}\,dr
= \int \frac{r(r^2-a^2)+a^2 r}{\sqrt{b^2-r^2}\sqrt{r^2-a^2}}\,dr\\
= \int r\frac{\sqrt{r^2-a^2}}{\sqrt{b^2-r^2}}\,dr+ \frac{a^2}{2}\arcsin\lt(\frac{2r^2-a^2-b^2}{b^2-a^2}\rt).
\end{multline}
Eventually, setting $\tau=(a/b)^2$ and using the successive change of variables $t=r/b$, $y=t^2$, and $t=\tau+(1-\tau)\sin^2\theta$,
we compute
\begin{equation}\label{arccosint:eq4}
\begin{aligned}
\int_{x_0}^x r\frac{\sqrt{r^2-a^2}}{\sqrt{b^2-r^2}}\,dr
&=b^2\int_{x_0/b}^{x/b} t\frac{\sqrt{t^2-(a/b)^2}}{\sqrt{1-t^2}}\,dt\\
&=\frac{b^2}{2}\int_{(x_0/b)^2}^{(x/b)^2} \frac{\sqrt{y-\tau}}{\sqrt{1-y}}\,dy\\
&=(b^2-a^2)\int_{\arcsin\lt(\sqrt{\frac{x_0^2-a^2}{b^2-a^2}}\rt)}^{\arcsin\lt(\sqrt{\frac{x^2-a^2}{b^2-a^2}}\rt)}
\sin^2\theta\,d\theta\\
&=\frac{(b^2-a^2)}{2}\bigg[\theta-\sin\theta\cos\theta
\bigg]_{\arcsin\lt(\sqrt{\frac{x_0^2-a^2}{b^2-a^2}}\rt)}^{\arcsin\lt(\sqrt{\frac{x^2-a^2}{b^2-a^2}}\rt)}\\
&=\frac{(b^2-a^2)}{2}\lt[\arcsin\lt(\sqrt{\frac{x^2-a^2}{b^2-a^2}}\rt)-\sqrt{\frac{x^2-a^2}{b^2-a^2}}\sqrt{\frac{b^2-x^2}{b^2-a^2}}\rt]
+\text{ constant}\\
&=\frac{1}{2}(b^2-a^2)\arcsin\lt(\sqrt{\frac{x^2-a^2}{b^2-a^2}}\rt)-\frac{1}{2}\sqrt{(x^2-a^2)(b^2-x^2)}+\text{ constant}.
\end{aligned}
\end{equation}
Gathering \cref{arccosint:eq1}, \cref{arccosint:eq2}, \cref{arccosint:eq3} and
\cref{arccosint:eq4}, we find the primitive function
\begin{equation}\label{arccosint:exp}
\begin{aligned}
&\int r\arccos\lt(\dfrac{r^2+s^2-l^2}{2rs}\rt)\, dr\\
&\quad= \frac{r^2}{2}\arccos\lt(\dfrac{r^2+s^2-l^2}{2rs}\rt)
+\frac{l(l-s)}{2}\arcsin\lt(\frac{r^2-l^2-s^2}{2ls}\rt)\\
&\qquad\qquad\qquad+ls\arcsin\lt(\sqrt{\frac{r^2-(l-s)^2}{4ls}}\rt)-\frac{1}{4}\sqrt{(r^2-(l-s)^2)((l+s)^2-r^2)}\\
&\quad\eqqcolon \psi(r,s,l).
\end{aligned}
\end{equation}
Summarizing, from \cref{proof_prp_ecylyukawa}, \cref{eballyukawa:modBessel}, \cref{cutvarphi:eq1}
and \cref{cutvarphi:eq2}, we deduce
\begin{equation}\label{eballyukawa:summary}
\cC_{1,\kappa}(l)
=8\pi\int_0^{2l} sK_0(s/\kappa)I(s)\,ds,
\end{equation}
with
\[
I(s)=
\begin{cases}
\dfrac\pi2 (l-s)^2+(\psi(l,s,l)-\psi(l-s,s,l))&\qquad\text{ if }s<l\\
\psi(l,s,l)-\psi(s-l,s,l) &\qquad\text{ if }s>l.
\end{cases}
\]
For $s\neq l$, we compute
\[
\psi(|s-l|,s,l) = \frac{(l-s)^2}{2}\arccos(\mathrm{sign}(s-l))-\frac{l(l-s)}{2}\frac{\pi}{2}
\]
and
\[
\psi(l,s,l) = \frac{l^2}{2}\arccos\lt(\frac{s}{2l}\rt)-\frac{l(l-s)}{2}\arcsin\lt(\frac{s}{2l}\rt)
+ls\arcsin\lt(\sqrt{\frac{2l-s}{4l}}\rt)
-\frac{s}{4}\sqrt{(2l)^2-s^2}.
\]
In both cases $s<l$ or $s>l$ for $I$ we find
\[
\begin{aligned}
I(s)
&= \frac{l^2}{2}\arccos\lt(\frac{s}{2l}\rt)-\frac{l(l-s)}{2}
\lt[\frac{\pi}{2}-\arcsin\lt(\frac{s}{2l}\rt)\rt]
+ls\arcsin\lt(\sqrt{\frac{2l-s}{4l}}\rt)
-\frac{s}{4}\sqrt{(2l)^2-s^2}\\
&= \frac{l(2l-s)}{2}\arccos\lt(\frac{s}{2l}\rt)
+ls\arcsin\lt(\sqrt{\frac{2l-s}{4l}}\rt)
-\frac{s}{4}\sqrt{(2l)^2-s^2}.
\end{aligned}
\]
We conclude the proof by recalling \cref{eballyukawa:summary} and 
\[
\sigma_{Y_{1,\kappa}}^{3,\cyl}(l)
= \frac{2}{l}+\frac{\cC_{1,\kappa}(l)}{\pi l^2}.
\]
\end{proof}

\subsection{Energy/mass ratio of balls versus infinite cylinders}

Now we compare the energy/mass of balls and long cylinders in the case of the Yukawa potential
$Y_{1,\kappa}$. We establish the following.

\begin{lem}\label{lem_ecylball_y}
Let $\kappa=0.56$. There holds $\rho_{Y_{1,0.56}}^{3,\cyl} < \rho_{Y_{1,0.56}}^{3,\ball}$.
\end{lem}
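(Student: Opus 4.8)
The plan is to follow the same two-step scheme used in the truncated Coulomb case: evaluate $\rho^{3,\ball}_{Y_{1,\kappa}}$ via \cref{prp_eballyukawa}, produce an upper bound for $\rho^{3,\cyl}_{Y_{1,\kappa}}$ by evaluating $\sigma^{3,\cyl}_{Y_{1,\kappa}}(l)$ at a single well-chosen radius $l$, and compare the two numbers. Since $\rho^{3,\cyl}_{Y_{1,\kappa}} = \inf_{l>0}\sigma^{3,\cyl}_{Y_{1,\kappa}}(l) \le \sigma^{3,\cyl}_{Y_{1,\kappa}}(l)$ for every $l>0$, it suffices to exhibit one radius at which the infinite cylinder beats the optimal ball.

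First I would compute $\rho^{3,\ball}_{Y_{1,0.56}}$. By \cref{prp_eballyukawa} this equals $f_{0.56}(\lambda_*)$, where $f_\kappa$ is the explicit elementary (exponential) function given there and $\lambda_*$ is its unique minimizer. Since $0.56 > (2\pi)^{-1/3}$, the proposition guarantees $\lambda_*>0$, so $\lambda_*$ is the unique positive root of the explicit equation $f'_{0.56}(\lambda)=0$; the strict convexity of $f_{0.56}$ (also established in \cref{prp_eballyukawa}) makes both the location of $\lambda_*$ and the value $f_{0.56}(\lambda_*)$ straightforward to certify, for instance by bracketing the root between two rationals where $f'_{0.56}$ changes sign.

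Next I would bound $\rho^{3,\cyl}_{Y_{1,0.56}}$ from above using \cref{prp_ecylyukawa}, which gives
\[
\sigma_{Y_{1,\kappa}}^{3,\cyl}(l) = \frac{2}{l} + \frac{8}{l^2}\int_0^{2l} s\,K_0(s/\kappa)\,I(s)\,ds,
\]
with the explicit elementary integrand $I(s)$. I would first locate a good radius $l=l(\kappa)$ by approximately minimizing $l\mapsto \sigma^{3,\cyl}_{Y_{1,0.56}}(l)$ numerically, as in the caption of \cref{fig:trunc_coulomb}, then fix that rational value of $l$ and certify $\sigma^{3,\cyl}_{Y_{1,0.56}}(l)$ to the precision needed for the comparison. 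Combining the two estimates then yields $\sigma^{3,\cyl}_{Y_{1,0.56}}(l) < f_{0.56}(\lambda_*) = \rho^{3,\ball}_{Y_{1,0.56}}$, hence $\rho^{3,\cyl}_{Y_{1,0.56}} < \rho^{3,\ball}_{Y_{1,0.56}}$.

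The main obstacle is that, in contrast with the truncated Coulomb case where everything reduced to elementary functions and the Catalan constant (so that symbolic computation yielded an exact certificate), the cylinder integral here contains the Bessel kernel $K_0(s/\kappa)$, which has no elementary antiderivative and is logarithmically singular at $s=0$. The comparison must therefore rest on a \emph{rigorous} numerical evaluation of $\int_0^{2l} s\,K_0(s/\kappa)I(s)\,ds$ rather than on exact symbolic manipulation. Since $s\,K_0(s/\kappa)I(s) = O(s\log(1/s))$ near $0$, the integrand extends continuously and is integrable, but one must control the quadrature error near the singularity: e.g. by isolating a small interval $[0,\eps]$ and bounding its contribution using $K_0(x)\le \log(1/x)+C$ together with the explicit size of $I$ there, while applying a standard quadrature rule with an explicit error bound on $[\eps,2l]$. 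Choosing $l$ so that $\sigma^{3,\cyl}_{Y_{1,0.56}}(l)$ lies comfortably below $\rho^{3,\ball}_{Y_{1,0.56}}$ provides the safety margin that makes the certified gap conclusive; the implementation is provided in the accompanying Jupyter notebook.
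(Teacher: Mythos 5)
Your overall scheme is the paper's: fix $\kappa=0.56$, certify a lower bound for $\rho^{3,\ball}_{Y_{1,0.56}}$ from the convexity of $f_\kappa$ in \cref{prp_eballyukawa}, and certify an upper bound for $\rho^{3,\cyl}_{Y_{1,0.56}}$ by evaluating $\sigma^{3,\cyl}_{Y_{1,0.56}}(l)$ at one near-optimal radius via \cref{prp_ecylyukawa} (the paper takes $l=2.09$ and obtains $3.8747<3.8755$). However, two of your certification steps are under-specified, and since the entire content of this lemma \emph{is} the rigorous certification of two numbers, they are genuine gaps. On the ball side, what is needed is a \emph{lower} bound on $\min f_{0.56}$. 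Bracketing $\lambda_*$ between two rationals where $f'_{0.56}$ changes sign only localizes the minimizer, and evaluating $f_{0.56}$ at the bracket endpoints produces \emph{upper} bounds on the minimum --- the wrong direction. You invoke strict convexity but never state the mechanism that turns the bracket into a lower bound. The paper's device is the tangent-line bound: a convex graph lies above its tangents, so with $T_\alpha,T_\beta$ the tangent lines at the bracket endpoints $\alpha=8.84\times10^{-2}$, $\beta=\alpha+10^{-4}$, one gets $\rho^{3,\ball}_{Y_{1,0.56}}\geq\min\bigl(T_\alpha(\beta),T_\beta(\alpha)\bigr)\geq 3.8755$.

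On the cylinder side, your plan to apply ``a standard quadrature rule with an explicit error bound on $[\eps,2l]$'' breaks down at the right endpoint: since $I(2l)=I'(2l)=0$, the integrand satisfies $F_\kappa(s)\sim \tfrac43 l^{3/2}K_0(2l/\kappa)(2l-s)^{3/2}$ as $s\up 2l$, so $F_\kappa''$ blows up like $(2l-s)^{-1/2}$ and the textbook error estimates for the trapezoid or Simpson rules (which require $\|F_\kappa''\|_\infty$ or $\|F_\kappa^{(4)}\|_\infty$) do not apply on $[\eps,2l]$. The paper sidesteps smoothness altogether by exploiting monotonicity: $K_0$ is nonincreasing, and $I$ is nonincreasing because $I''>0$ and $I'(2l)=0$ force $I'\leq0$; hence on each grid cell $[kh,(k+1)h]$ one has $F_\kappa(s)\leq s\,I(kh)\,K_0(kh/\kappa)$, while on the first cell $(0,h)$ the explicit bound $K_0(x)<e^{-x}\sqrt{\pi/(2x)}$ tames the logarithmic singularity. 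Summing these majorants over $N=3\cdot10^4$ cells yields the certified bound $\sigma^{3,\cyl}_{Y_{1,0.56}}(2.09)\leq 3.8747$ with no derivative information whatsoever. Your handling of the singularity at $0$ is fine and in the same spirit, but you must patch the endpoint $2l$ in a similar way (or simply adopt the monotone majorization throughout); as written, the error control you appeal to on $[\eps,2l]$ does not exist.
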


\begin{proof}
For this proof, we frequently rely on symbolic computations. Further details are provided in the
Jupyter Notebook included as supplementary material.\\
First, we look for a lower bound for $e^{3,\mathrm{ball}}_{Y_{1,\kappa}}$.
Using Newton method to find a zero of $f_\kappa'$ numerically, we observe that the minimum is
likely reached at
\[
\lambda_* \subset (\alpha,\beta),
\]
where $\alpha=8.84 \times 10^{-2}$ and $\beta=\alpha+10^{-4}$.
A posteriori, we ensure that this holds true by using the strict convexity of $f_\kappa$ and
checking with a symbolic computation library (we used Python's SymPy) that
\[
f_\kappa'(\alpha) f_\kappa'\lt(\beta\rt) < 0.
\]
By convexity again, the graph of $f_\kappa$ lies above its tangent lines. In
particular, setting $T_\alpha(x)=f_\kappa(\alpha)+f_\kappa'(\alpha)(x-\alpha)$ and
$T_\beta(x)=f_\kappa(\beta)+f_\kappa'(\beta)(x-\beta)$, we have the lower bound
\begin{equation}\label{yukawalowerbound}
e^{3,\mathrm{ball}}_{Y_{1,\kappa}}
\geq \min (T_\alpha(\beta),T_\beta(\alpha))
\geq 3.8755.
\end{equation}
The above numerical lower bound is guaranteed by SymPy, which handles error propagation to evaluate
expressions to the desired precision.
Second, we search for an upper bound of $e^{3,\mathrm{cyl}}_{Y_{1,\kappa}}$. By definition, any
$\sigma^{3,\cyl}_{1,\kappa}(l)$ is an upper bound.
Recall that by \cref{prp_ecylyukawa}, $\sigma^{3,\mathrm{cyl}}_{Y_{1,\kappa}}(l)$ is given by
\[
\sigma_{Y_{1,\kappa}}^{3,\cyl}(l)=\frac{2}{l}+\frac{8}{l^2}\mathcal{J}_{\kappa}(l),
\]
where we have set
\[
\mathcal{J}_\kappa(l) = \int_0^{2l} F_\kappa(s)\,ds
\quad\text{ and }\quad
F_\kappa(s)=s K_0(s/\kappa)I(s).
\]
Here $K_0$ is a modified Bessel function of the second kind, and
\[
I(s) = \frac{l(2l-s)}{2}\arccos\lt(\frac{s}{2l}\rt) +ls\arcsin\lt(\sqrt{\frac{2l-s}{4l}}\rt)
-\frac{s}{4}\sqrt{(2l)^2-s^2}.
\]
From the integral expression of $K_0$ given in \cref{eballyukawa:modBessel}, we see that $K_0$ is
nonincreasing. In addition, after computation we find
\begin{equation}\label{derivI}
I'(s) = 
-\frac{\sqrt{(2l)^2-s^2}}{2}
-\frac{l}{2}\arccos\lt(\frac{s}{2l}\rt)
+l\arcsin\lt(\sqrt{\frac{2l-s}{4l}}\rt)
\quad\text{ and }\quad
I''(s) = \frac{s}{2\sqrt{(2l)^2-s^2}}.
\end{equation}
We see that $I''(s)$ is positive, thus $I'$ is increasing on $(0,2l)$. Since $I'(2l)=0$, it
follows that $I'$ is non-positive and thus $I$ is nonincreasing.
Hence, for any segment $[a,b]\subset (0,2l]$, we have
\begin{equation}\label{Fubound1}
F_\kappa(s) \leq sI(a)K_0(a/\kappa),\qquad\forall s\in[a,b].
\end{equation}
In addition, by \cite[Corollary~3.3]{Gau2014} we have
\[
K_0(x) < e^{-x}\sqrt{\frac{\pi}{2x}},\qquad\forall x>0,
\]
so that
\begin{equation}\label{Fubound2}
F_\kappa(s) \leq I(0)\sqrt{\frac{\pi\kappa s}{2}},\qquad\forall s\in(0,b).
\end{equation}
Splitting the interval $(0,2l)$ into the regular grid of size step  $h={2l}/{N}$ for some $N\ge 1$,
we get the estimate
\begin{equation}\label{boundJk}
\begin{aligned}
\mathcal{J}_\kappa(l)
&= \sum_{k=0}^{N-1} \int_{kh}^{(k+1)h} F_\kappa(s)\,ds\\
&\leq I(0)\sqrt{\frac{\pi\kappa}{2}}\int_0^{h}s\,ds
+\sum_{k=1}^{N-1} I(kh)K_0\lt(\frac{kh}{\kappa}\rt)\int_{kh}^{(k+1)h} s\,ds\\
&= \frac{\sqrt{2}}{3}I(0)\sqrt{\pi\kappa}|h|^{3/2}+\sum_{k=1}^{N-1}
\frac{|h|^2}{2}\lt(k+\frac{1}{2}\rt)I(kh)K_0\lt(\frac{kh}{\kappa}\rt).
\end{aligned}
\end{equation}
Here we used \cref{Fubound2} to estimate $F_\kappa$ on the interval $(0,h)$, and \cref{Fubound1} on the
intervals $[kh,(k+1)h]$ with $k\geq 1$.
Python's SymPy library can compute the above sum (and thus $\sigma_{Y_{1,\kappa}}^{3,\cyl}(l)$) to
desired precision by handling errors propagation along the computations.
Choosing $l=2.09$ and $N=3\cdot 10^4$, SymPy guarantees the upper bound
\begin{equation}\label{yukawaupperboundfcyl}
\sigma_{Y_{1,0.56}}^{3,\cyl}(2.09) \leq 3.8747,
\end{equation}
which implies
\begin{equation}\label{yukawaupperbound}
\rho_{Y_{1,0.56}}^{3,\cyl} \leq 3.8747.
\end{equation}
This concludes the proof.
\end{proof}
\begin{rmk}
We used the upper bound~\cref{boundJk} and SymPy to obtain the upper bound~\cref{yukawaupperboundfcyl} for safety, but a numerical
evaluation of the integral using a more precise quadrature formula (see the discussion further below) actually gives
\[
\sigma_{Y_{1,\kappa}}^{3,\cyl}(2.09) \simeq 3.8730.
\]
\end{rmk}

\cref{lem_ecylball_y} implies \cref{mainprp_prp1} and thus \cref{mainthm:thm1} in the case of the Yukawa potential, in
view of \cref{prp_ecyleball_nonsphere}.

\vspace{10pt}

In fact, numerical computations indicate that $\rho_{Y_{1,\kappa}}^{3,\cyl} \leq
\rho_{Y_{1,\kappa}}^{3,\cyl}$ holds on a whole interval of values of $\kappa$ around $0.56$, see
\cref{fig:yukawa}.
\begin{figure}[ht]
    \begin{minipage}{0.71\textwidth}
         \begin{center}
 		\captionsetup{width=.95\textwidth}
 		\includegraphics[width=.95\textwidth]{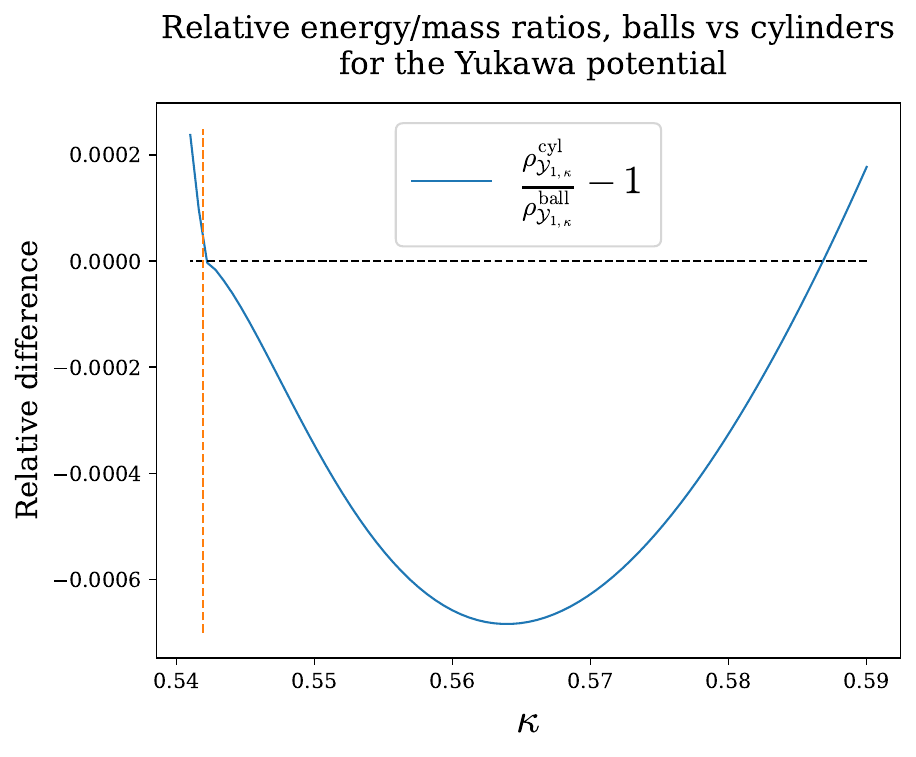}
 		\end{center}
 	\end{minipage}
 	\caption{\label{fig:yukawa}  Comparison of the energy/mass ratio for balls and infinite  cylinders with the Yukawa
	potential for varying cut-length $\kappa$. We used $\sigma_{Y_{1,\kappa}}^{3,\cyl}(l(\kappa))$ with
	appropriate $l(\kappa)$ as an upper bound for $\rho_{Y_{1,\kappa}}^{3,\cyl}$. As in the previous section, such $l(\kappa)$
	are obtained using SciPy's library to optimize $l\mapsto
	\sigma_{Y_{1,\kappa}}^{3,\cyl}(l)$.\\ 
	The vertical dashed line corresponds to
	$\kappa=(2\pi)^{-\frac{1}{3}}$, the threshold below which the infimum
	$\rho^{3,\mathrm{ball}}_{Y_{1,\kappa}}$ is obtained by taking balls $B_R$ with $R\to\infty$.
	}
\end{figure}
%
%
%
%

Let us detail how we  compute accurate numerical approximations of $\sigma_{Y_{1,\kappa}}^{3,\cyl}(l)$ in order to produce~\cref{fig:yukawa}.
We have to integrate numerically the function $F_\kappa(s)=s\mapsto s K_0(s/\kappa)I(s)$.
We would like to use Simpson's quadrature formula, which gives an error in $O(|h|^4)$ for
\textsl{smooth functions}, where $h$ is the step size of the grid. However, the effective order of
convergence depends on the regularity of the integrand and it turns out that  $F_\kappa$  is not
smooth in $0$ nor in $2l$. 
\\
Let us study the regularity of  $F_\kappa$. We see that $I$ is smooth on $[0,2l)$ and $K_0$ is smooth on $(0,+\infty)$, so that $F_\kappa$ is smooth on
$(0,2l)$. The only issues appear in $0$ and in $2l$.\\[5pt]
\underline{Behavior in $0$}.
We have (see \cite[Chapter~III,~\string{3.71,~(14)\string} and \string{3.7,~(2)\string}]{Wat1995})
\[
K_0(s)\stackrel{s\dw 0}{=} -\log(s)+\log 2-\gamma+\frac{s^2}{4}\lt(\gamma-1-\log 2\rt)-\frac{s^2}{4}\log(s)
+O(s^4\log(s)),
\]
where $\gamma$ is Euler's constant.
Hence $F_\kappa$ is continuous in zero with a singularity of the form $s\log(1/s)$. We have the
asymptotic expansion
\begin{multline*}
F_\kappa(s)
= s\lt(-\log(s/\kappa)+\log 2-\gamma+\frac{1}{4}\lt(\frac{s}{\kappa}\rt)^2(\gamma-1-\log 2)
-\frac{1}{4}\lt(\frac{s}{\kappa}\rt)^2\log(s/\kappa)+O(s^4\log(s))\rt)\\
\times\lt(I(0)+I'(0)s+\frac{I''(0)}{2}s^2+O(s^3)\rt).
\end{multline*}
Subtracting the more singular terms
\[
 G_\kappa^{(1)} (s)
=-\log(s/\kappa)\lt[
\kappa I(0)\lt(\frac{s}{\kappa}\rt)
+\kappa^2I'(0)\lt(\frac{s}{\kappa}\rt)^2
\rt]
\]
to $F_\kappa$, we get that the non-smooth leading order term of $F_\kappa(s)-G_\kappa^{(1)} (s)$ is of the form $s^3\log(s)$ near $s=0$. It follows that using Simpson's quadrature formula to integrate $F_\kappa-G_\kappa^{(1)}$ on the interval $(0,l)$ yields an error in $O(|h|^4)$.\\[5pt]
\underline{Behavior in $2l$}.
Since $I(2l)=0$, we deduce from~\cref{derivI} the equivalent
\[
F(s)~ \stackrel{s\up 2l}{\sim}~ \frac{4}{3}l^{3/2}K_0\lt(\frac{2l}{K}\rt)(2l-s)^{3/2} \eqqcolon
G^{(2)}_\kappa(s).
\]
The contribution of such term in the quadrature error  using
Simpson's quadrature formula is in $O(|h|^{5/2})$,~\cite{CN2002}. To gain an order $1$, we subtract $ G_\kappa^{(2)} $ to $F_\kappa$. The first term in the
asymptotic expansion of $F_\kappa- G_\kappa^{(2)} $ at $2l$ is then of the form $(2l-s)^{5/2}$, which gives a
contribution in $O(|h|^{7/2})$ to the error.\\[5pt]

Subtracting the leading singular parts at 0 and $2l$, we define 
\[
F_\kappa^{\mathrm{reg}} :=F_\kappa- G_\kappa^{(1)} - G_\kappa^{(2)},
\]
and we decompose
\[
\int_0^{2l} F_\kappa(s)\,ds = \int_0^{2l} F_\kappa^{\mathrm{reg}}(s)\,ds+\int_0^{2l}  G_\kappa^{(1)} (s)+ G_\kappa^{(2)} (s)\,ds.
\]
On the one hand, we use the following closed formula for the integration of the singular parts:
\begin{multline*}
\int_0^{2l}  G_\kappa^{(1)} (s)+ G_\kappa^{(2)} (s)\,ds\\
=
l^2 I(0)\lt(1-2\log\lt(\frac{2l}{\kappa}\rt)\rt)+\frac{8}{9}l^3I'(0)\lt(1-3\log\lt(\frac{2l}{\kappa}\rt)\rt)
+\frac{32\sqrt{2}}{15}l^4K_0\lt(\frac{2l}{\kappa}\rt).
\end{multline*}
On the other hand we use  Simpsons's quadrature
formula to integrate $F_\kappa^{\mathrm{reg}}$.  The above considerations indicate that the
quadrature error is in $O(|h|^{7/2})$ which is efficient enough to reach machine precision with a
low numerical cost. In practice, we observe a rate of convergence in $O(|h|^4)$,
see~\cref{fig:cv_num_integ_yukawa}, due to the small prefactor $sK_0(s/\kappa)\simeq 10^{-3}$ in
$s=2l$ and the fact that machine precision is reached for $|h|\simeq 10^{-3}$.

\begin{figure}[ht]
\captionsetup{width=.95\textwidth}
\includegraphics[width=.65\textwidth]{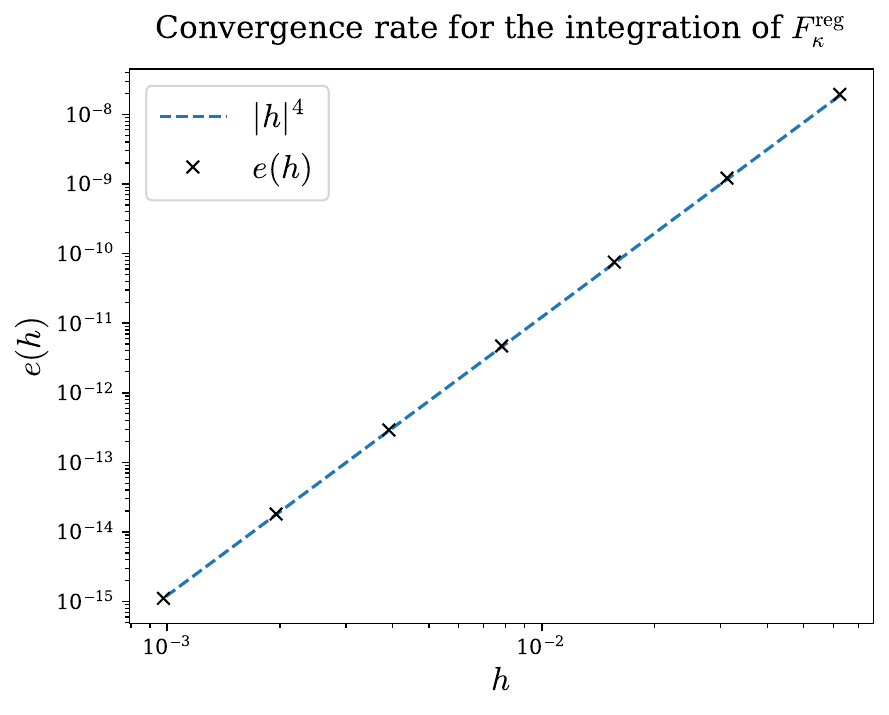}
\vspace{-10pt}
\caption{\label{fig:cv_num_integ_yukawa} Convergence rate for the numerical integration of
$F^{\mathrm{reg}}_\kappa$ on $(0,2l)$ using Simpson's quadrature formula on a regular grid. Here $\kappa=0.56$, $l=2.09$ and $e(h)$ is the 
 error using step size $h$. For the ``exact'' value we use  Simpson's quadrature formula on a very  fine grid with step size $2l/2^{14}$.}
\end{figure}

~

%
%

\noindent
{\bf Acknowledgments.}

L. Bronsard is supported by an NSERC Discovery Grant.
B. Merlet and M. Pegon are partially supported by the ANR Project STOIQUES (ANR-24-CE40-2216).
This work was supported in part by the Labex CEMPI (ANR-11-LABX-0007-01).

\printbibliography

\end{document}